\documentclass[smallextended,envcountsect]{svjour3} 
\textwidth=16.00cm 
\textheight=22.00cm 
\topmargin=0.00cm
\oddsidemargin=0.00cm 
\evensidemargin=0.00cm 
\headheight=0cm 
\headsep=0.5cm

\textheight=630pt
\usepackage{hyperref, comment}
\usepackage{latexsym,array,delarray,amsthm,amssymb,epsfig,setspace,tikz,amsmath,enumerate,mathrsfs,graphicx}%eufrak
%\usepackage[sumlimits]{amsmath}

% \usepackage[centredisplay,PostScript=dvips]{diagrams}
%\usepackage[dvips]{color}
%\usepackage[mathscr]{eucal}mathrsfs

% Theorem Formatting Commands.

\usepackage{natbib}

\setcounter{MaxMatrixCols}{15}
\theoremstyle{plain}
\newtheorem{thm}{Theorem}

\newtheorem{lem}{Lemma}

\newtheorem{prop}{Proposition}

\newtheorem{cor}{Corollary}

\newtheorem*{thm*}{Theorem}

\newtheorem*{prop*}{Proposition}

\newtheorem*{cor*}{Corollary}

\theoremstyle{definition}
\newtheorem{defn}{Definition}
\newtheorem{ex}{Example}

\newtheorem{conj}{Conjecture}

\theoremstyle{remark}

% Miscellaneous Special Capitals
%Blackboard Capitals
\newcommand{\zz}{\mathbb{Z}}
\newcommand{\nn}{\mathbb{N}}

\newcommand{\rr}{\mathbb{R}}

%Boldface Lowercase

%Caligraphics

\newcommand{\cf}{\mathcal{F}}

%Other Commands

\newcommand{\ind}{\mbox{$\perp \kern-5.5pt \perp$}}

\newcommand{\indep}{\rotatebox[origin=c]{90}{$\models$}}

%\journalname{Annals of the Institute of Statistical Mathematics }

\begin{document}

\title{Gaussian graphical models with toric vanishing ideals \thanks{Pratik Misra and Seth Sullivant were  partially supported 
by the US National Science Foundation (DMS 1615660).}}

\author{Pratik Misra         \and
Seth Sullivant}

\institute{Pratik Misra (corresponding author) \at
Department of Mathematics \\
North Carolina State University\\
2108 SAS Hall Box 8205\\
Raleigh, NC 27695, USA\\
\email{pmisra@ncsu.edu}
\and
Seth Sullivant \at
Department of Mathematics \\
North Carolina State University\\
2108 SAS Hall Box 8205\\
Raleigh, NC 27695, USA\\
\email{smsulli2@ncsu.edu} 
}

%\date{Received: 12/16/2019 ; Revised: date}

\maketitle

\begin{abstract}
Gaussian graphical models are semi-algebraic subsets of the cone of positive definite covariance matrices. 
They are widely used throughout natural sciences, computational biology and many other fields. 
Computing the vanishing ideal of the model gives us an implicit description of the model. In this paper, we resolve two conjectures given by Sturmfels and Uhler. 
In particular, we characterize those graphs for which the vanishing ideal of the
Gaussian graphical model is generated in degree $1$ and $2$.  These turn out to be
the Gaussian graphical models whose ideals are toric ideals, and the resulting
graphs are the $1$-clique sums of complete graphs.  
\end{abstract}

\keywords{ clique sum, toric ideals, SAGBI bases, initial algebra}

\maketitle

\section{Introduction}
Any positive definite $n\times n$ matrix $\Sigma$ can be seen as the covariance matrix 
of a multivariate normal distribution in $\mathbb{R}^n$. The inverse matrix $K=\Sigma^{-1}$ is called the \emph{concentration} matrix of the distribution, which is also positive definite. The statistical models where the concentration matrix $K$ can be written as a linear combination of some fixed linearly independent symmetric matrices $K_1,K_2,...,K_d$ are called \emph{linear concentration} models.

Let $\mathbb{S}^n$ denote the vector space of real symmetric matrices and let $\mathcal{L}$ be a linear subspace of $\mathbb{S}^n$ generated by $K_1,K_2,...,K_d$. The set $\mathcal{L}^{-1}$ is defined as 
\[
\mathcal{L}^{-1}=\{\Sigma \in \mathbb{S}^n : \Sigma^{-1}\in \mathcal{L}\}.
\]
The homogeneous ideal of all the polynomials in 
$\mathbb{R}[\Sigma]=\mathbb{R}[\sigma_{11},\sigma_{12},...,\sigma_{nn}]$ that vanish 
on $\mathcal{L}^{-1}$ is denoted by $P_\mathcal{L}$. Note that $P_\mathcal{L}$ is prime because
it is the vanishing ideal of $\mathcal{L}^{-1}$, which is the image of the irreducible variety
$\mathcal{L}$ under the rational inversion map.  
In this paper, we study the problem of finding a generating set of $P_\mathcal{L}$ for the special case of Gaussian graphical models.

Gaussian graphical models are used throughout the natural sciences and 
especially in computational biology as seen in \cite{Koller(2009)}, \cite{Lauritzen(1996)}. 
These models explicitly capture the statistical relationships between the 
variables of interest in the form of a graph. 
The undirected Gaussian graphical model is obtained when the subspace 
$\mathcal{L}$ of $\mathbb{S}^n$ is defined by the vanishing of some off-diagonal 
entries of the concentration matrix $K$. We fix a graph $G=([n],E)$ with vertex set 
$[n]=\{1,2,...,n\}$ and edge set $E$, which is assumed to contain all self loops. 
The subspace $\mathcal{L}$ is generated by the set $\{K_{ij}|(i,j)\in E\}$ of 
matrices $K_{ij}$ with $1$ entry at the $(i,j)^{th}$ and $(j,i)^{th}$ position 
and $0$ in all other positions. We denote the ideal $P_\mathcal{L}$ as $P_G$ in this model.

One way to compute $P_G$ is to eliminate the entries of an 
indeterminate symmetric $n \times n$ matrix $K$ from the following system of equations:
\[
\Sigma \cdot K = Id_n,\hspace{.5cm}  K\in \mathcal{L},
\]
where $Id_n$ is the $n\times n$ identity matrix.  However, this elimination is
computationally expensive, and we would like methods to identify generators of $P_G$ directly
in terms of the graph.

Various methods have been proposed for
finding some generators in the ideal $P_G$ and for trying to build $P_G$ from smaller ideals associated
to subgraphs.  These approaches are based on separation criteria in the graph $G$.  

\begin{defn}
Let $G=(V,E)$ be a graph.    
\begin{itemize}
\item  A set $C \subseteq V$ is called a \emph{clique} of $G$ if the subgraph induced by $C$ is a complete graph.
\item  Let $A,B,$ and $C$ be disjoint subsets of the vertex 
set of $G$ with $A\cup B\cup C=V$.  Then $C$ \emph{separates} $A$ and $B$ 
if for any $a\in A$ and $b\in B$, any path from $a$ to $b$ passes through a 
vertex in $C$. 
\item   The graph $G$ is said to be a \emph{c-clique sum} of smaller graphs $G_1$ and $G_2$ if there exists a partition $(A,B,C)$ of its vertex set such that 
\begin{enumerate}[i)]
\item $C$ is a clique with $|C|=c$, 
\item $C$ separates $A$ and $B$,
\item $G_1$ and $G_2$ are the subgraphs induced by $A\cup C$ and $B\cup C$ respectively. 
\end{enumerate}
In the case that $G$ is a $c$-clique sum, we call the  corresponding 
partition $(A,B,C)$ a \emph{c-clique partition} of $G$.
\end{itemize}

\end{defn}

If $G$ is a c-clique sum of $G_1$ and $G_2$, the ideal
\begin{eqnarray}\label{eqnarray:c-clique}
P_{G_1} + P_{G_2} + \langle (c+1)\times (c+1) \text{-minors of } \Sigma _{A\cup C, B\cup C} \rangle
\end{eqnarray}
is contained in $P_G$. 
Here $\Sigma_{A\cup C,B\cup C}$ denotes the submatrix of $\Sigma$ obtained by taking all rows indexed
by $A \cup C$ and columns indexed by $B \cup C$, and so   
\[
\langle (c+1)\times (c+1) \text{-minors of } \Sigma _{A\cup C, B\cup C} \rangle
\]
is the conditional independence ideal associated to the conditional independence statement $A \ind B | C$.
Though the ideal (\ref{eqnarray:c-clique}) fails to  equal $P_G$, (or even have the same radical as that of $P_G$) 
for $c\geq 2$,
\cite{Sturmfels(2010)}  conjectured it to be equal to $P_G$ for $c=1$.

\begin{conj}[\cite{Sturmfels(2010)}]\label{conj:false}
Let $G$ be a 1-clique sum of two smaller graphs $G_1$ and $G_2$. If $(A,B,C)$ is the 1-clique partition of $G$ where $G_1$ and $G_2$ are the subgraphs induced by $A\cup C$ and $B\cup C$ respectively, then
\begin{eqnarray*}
P_G= P_{G_1} + P_{G_2} + \langle 2\times 2 \text{-minors of } \Sigma _{A\cup C, B\cup C} \rangle.
\end{eqnarray*}
\end{conj}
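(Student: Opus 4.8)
The plan is to reduce the statement to a single commutative-algebra assertion and then test it. Write $C=\{v\}$ and set
\[
J \;:=\; P_{G_1}+P_{G_2}+\langle 2\times 2\text{-minors of }\Sigma_{A\cup C,\,B\cup C}\rangle ,
\]
so that $J\subseteq P_G$ is already known; since $P_G$ is prime, it would suffice to prove that $J$ is prime of the same dimension as $P_G$. First I would pin down $\mathcal{L}_G^{-1}$ by a Schur-complement computation: since $v$ separates $A$ from $B$, every $K\in\mathcal{L}_G$ is block-arrow with $K_{AB}=0$, and taking the Schur complement of the $B$-block shows that $(\Sigma_{A\cup\{v\}})^{-1}$ equals $K_{A\cup\{v\},A\cup\{v\}}$ altered only in its $(v,v)$ entry; hence $\Sigma_{A\cup\{v\}}\in\mathcal{L}_{G_1}^{-1}$, symmetrically $\Sigma_{B\cup\{v\}}\in\mathcal{L}_{G_2}^{-1}$, while the global Markov property gives $\Sigma_{AB}=\sigma_{vv}^{-1}\Sigma_{Av}\Sigma_{vB}$, i.e.\ $\mathrm{rank}\,\Sigma_{A\cup C,B\cup C}=1$. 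Reversing the computation shows that any $\Sigma$ with these three properties is $G$-Markov, so $\mathcal{L}_G^{-1}$ is exactly the rank-one completion of the fiber product $\{(\Sigma_1,\Sigma_2)\in\mathcal{L}_{G_1}^{-1}\times\mathcal{L}_{G_2}^{-1}:(\Sigma_1)_{vv}=(\Sigma_2)_{vv}\}$.

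This description should yield $\sqrt J=P_G$ (a dimension count along the parametrisation, plus a short limiting argument to rule out a component of $V(J)$ hiding in $\{\sigma_{vv}=0\}$), so everything reduces to radicality of $J$. The natural attack, and the one in the spirit of the SAGBI-basis methods of this paper, is to build a Gr\"obner basis of $J$: choose a term order making the leading monomials of the $2\times 2$-minors the ``diagonal'' ones (such as $\sigma_{ab}\sigma_{vv}$), adjoin Gr\"obner bases of $P_{G_1}$ and $P_{G_2}$, which live in the almost-disjoint variable blocks $\{\sigma_{ij}:i,j\in A\cup\{v\}\}$ and $\{\sigma_{ij}:i,j\in B\cup\{v\}\}$, and try to show the union is a Gr\"obner basis with square-free initial ideal.

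The hard part — and, I expect, the step that actually fails — is the interaction between a high-degree generator of $P_{G_1}$ and the Segre relations, and a tiny example already exhibits the failure. Take $G_1$ to be the $4$-cycle on $\{a_1,a_2,a_3,v\}$ with edges $a_1a_2,a_2a_3,a_3v,va_1$ and let $G_2$ be the single edge $\{v,b\}$, so that $G$ is a $4$-cycle with a pendant vertex and $J=P_{C_4}+\langle 2\times 2\text{-minors of }\Sigma_{\{a_1,a_2,a_3,v\},\{v,b\}}\rangle$ (because $P_{G_2}=0$). On $\mathcal{L}_G^{-1}$ the conditional independence $\{a_2\}\ind\{v,b\}\mid\{a_1,a_3\}$ holds, so $\det\bigl(\Sigma_{\{a_2,a_1,a_3\},\{b,a_1,a_3\}}\bigr)\in P_G$. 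On the other hand $P_{C_4}$ is homogeneous for the grading induced by the torus action rescaling the $v$-coordinate of the Gaussian vector, and has no nonzero degree-zero element, so every monomial appearing in any element of $P_{C_4}$ is divisible by a variable having $v$ as an index; the same is visibly true of every $2\times 2$-minor of $\Sigma_{\{a_1,a_2,a_3,v\},\{v,b\}}$, hence of every element of $J$. Since $\det\bigl(\Sigma_{\{a_2,a_1,a_3\},\{b,a_1,a_3\}}\bigr)$ contains the monomial $\sigma_{a_2b}\,\sigma_{a_1a_1}\,\sigma_{a_3a_3}$, in which no index is $v$, it lies in $P_G\setminus J$. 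So I would conclude that the conjecture as stated is false, the obstruction being exactly that $J$ need not be saturated with respect to $\sigma_{vv}$; what survives is the version needed for the main theorem — when the summands $G_1,G_2$ are themselves $1$-clique sums of complete graphs, so that the induction bottoms out at $P_{G_1}=P_{G_2}=0$ and $J$ is the toric ideal generated by the binomial $2\times 2$-minors, giving the toric description of $P_G$.
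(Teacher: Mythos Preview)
Your disproof is correct and more conceptual than the paper's. The paper refutes the conjecture by exhibiting two explicit counterexamples checked with Macaulay2: a six-vertex graph built from three triangles, and the path on four vertices split at vertex~$3$. Your five-vertex example---the $4$-cycle with a pendant edge---is different, and your grading argument via the torus scaling the $v$-coordinate gives a computation-free certificate that $\det\Sigma_{\{a_2,a_1,a_3\},\{b,a_1,a_3\}}\in P_G\setminus J$: every monomial of every generator of $J$ carries the index $v$, while this determinant contains $\sigma_{a_2b}\,\sigma_{a_1a_1}\,\sigma_{a_3a_3}$. Your diagnosis that the obstruction is the failure of saturation at $\sigma_{vv}$ also matches the paper's informal discussion after its counterexamples.

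One correction to your closing remark: the paper's path-on-four-vertices counterexample has \emph{both} summands equal to block graphs (paths of lengths $3$ and $2$), yet the conjecture still fails there---$P_G$ has five quadratic generators while $J$ has only four. So even when $G_1,G_2$ are themselves $1$-clique sums of complete graphs, the identity $P_G=P_{G_1}+P_{G_2}+\langle 2\times 2\ \text{minors}\rangle$ is false; the main theorem is not proved by salvaging the conjecture along such an induction, but through an entirely different route via the shortest-path monomial map and a SAGBI-basis argument.
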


In Section \ref{sec:conjfalse}, we give counterexamples to this conjecture, and even a natural
strengthening of it.  However, the motivation for Conjecture \ref{conj:false}
was to use it as a tool to prove a different conjecture characterizing the 
graphs for which the vanishing ideal $P_G$ is generated in degree $\leq 2$.  To
explain the details of this conjecture we need some further notions.

Let $X=(X_1,X_2,\ldots,X_n)$ be a Gaussian random vector. 
If $A,B,C\subseteq [n]$ are pairwise disjoint subsets, 
then from Proposition 4.1.9 of  \cite{Sullivant(2018)} we know that 
$X_A$ is conditionally independent of $X_B$ given $X_C$ (i.e $A\indep B|C$) if and only if 
the submatrix $\Sigma_{A\cup C, B\cup C}$ of the covariance matrix $\Sigma$ has rank $|C|$. 
The \emph{Gaussian conditional independence ideal} for 
the conditional independence statement $A\indep B|C$ is given by 
\[ 
J_{A\indep B|C}=\langle (|C|+1)\times (|C|+1) \text{ minors of } \Sigma_{A\cup C,B\cup C} \rangle.
\]
If $G$ is an undirected graph and $(A,B,C)$ is a partition with $C$ 
separating $A$ from $B$, then the conditional independence statement $A\indep B|C$ 
holds for all multivariate normal distributions where the covariance matrix 
$\Sigma$ is obtained from $G$ (by the global Markov property). 
The \emph{conditional independence ideal} for the graph $G$ is defined by 
\[
CI_G= \sum_{A\indep B|C \text{ holds for }G}J_{A\indep B|C}.
\]

\begin{prop}
For any given graph $G$, $CI_G\subseteq P_G$.
\end{prop}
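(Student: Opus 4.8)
The plan is to show that each individual Gaussian conditional independence ideal $J_{A\indep B|C}$ occurring in the sum defining $CI_G$ is contained in $P_G$; since $P_G$ is an ideal, the sum of all such ideals is then contained in $P_G$ as well. So fix a partition $(A,B,C)$ of a subset of $[n]$ with $C$ separating $A$ from $B$ in $G$; I must show that every $(|C|+1)\times(|C|+1)$ minor of the submatrix $\Sigma_{A\cup C, B\cup C}$ vanishes identically on $\mathcal{L}^{-1}$, i.e., on every covariance matrix $\Sigma$ whose inverse $K$ lies in the subspace $\mathcal{L}$ cut out by the graph $G$.

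The key step is the translation between vanishing of minors and conditional independence. By the cited Proposition 4.1.9 of \cite{Sullivant(2018)}, for a Gaussian random vector $X$ with covariance matrix $\Sigma$, the rank of $\Sigma_{A\cup C, B\cup C}$ equals $|C|$ precisely when $X_A \indep X_B \mid X_C$; equivalently, the $(|C|+1)$-minors of that submatrix all vanish exactly when this conditional independence statement holds (the rank is always at least $|C|$ since $\Sigma$, being positive definite, has $\Sigma_{C,C}$ invertible of full rank $|C|$ sitting inside the submatrix). So it suffices to argue that whenever $K = \Sigma^{-1}$ respects the graph $G$ — meaning $K_{ij} = 0$ for all non-edges $(i,j)$ — and $C$ separates $A$ from $B$, the distribution $N(0,\Sigma)$ satisfies $X_A \indep X_B \mid X_C$. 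This is exactly the statement that such distributions obey the global Markov property relative to $G$, which is the Hammersley–Clifford theorem for Gaussian graphical models (see \cite{Lauritzen(1996)}): positivity of the density guarantees that the pairwise Markov property encoded by the zero pattern of $K$ upgrades to the global Markov property. I would cite this rather than reprove it.

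Assembling the argument: given any $\Sigma \in \mathcal{L}^{-1}$, the matrix $K = \Sigma^{-1}$ lies in $\mathcal{L}$, hence has the required zero pattern, hence $N(0,\Sigma)$ is globally Markov with respect to $G$; therefore for each separating partition $(A,B,C)$ we get $X_A \indep X_B \mid X_C$, so $\operatorname{rank} \Sigma_{A\cup C, B\cup C} = |C|$, so all $(|C|+1)$-minors of $\Sigma_{A\cup C, B\cup C}$ evaluate to zero at $\Sigma$. Thus every generator of every $J_{A\indep B|C}$ appearing in $CI_G$ vanishes on $\mathcal{L}^{-1}$, so each such generator lies in the vanishing ideal $P_G$ of $\mathcal{L}^{-1}$. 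Since $CI_G$ is generated by these polynomials and $P_G$ is an ideal, $CI_G \subseteq P_G$.

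The only mild subtlety — and the one step I would flag — is making sure the minor vanishing is genuinely an identity on all of $\mathcal{L}^{-1}$ and not merely on a dense subset: one should note that $\mathcal{L}^{-1}$ is by construction the image of (the dense open locus of invertible matrices in) $\mathcal{L}$ under matrix inversion, and its Zariski closure is the irreducible variety whose ideal is $P_G$; since the minors vanish on the image, they vanish on its closure, hence lie in $P_G$. Everything else is a direct invocation of the global Markov property and the rank characterization of conditional independence, so there is no real computational obstacle here; the proposition is essentially the observation that Markov structure forces rank deficiencies, which are polynomial (minor) conditions.
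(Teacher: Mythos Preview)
Your proposal is correct and follows essentially the same approach as the paper: both argue that the global Markov property forces $\operatorname{rank}\Sigma_{A\cup C,B\cup C}=|C|$ for every $\Sigma\in\mathcal{L}^{-1}$, so the generating minors of $CI_G$ vanish on $\mathcal{L}^{-1}$ and hence lie in $P_G$. The paper's proof is a one-sentence version of exactly this, relying on the global Markov property stated just before the proposition; your write-up simply unpacks the details (Hammersley--Clifford, the rank--CI equivalence, and the Zariski closure remark) that the paper leaves implicit.
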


\begin{proof}
As the rank of the submatrices $\Sigma_{A\cup C,B\cup C}$ of the covariance matrix $\Sigma$ is $|C|$ for all partitions $(A,B,C)$ of $G$, the generators of $CI_G$ vanish on the matrices in $\mathcal{L}^{-1}$.
\end{proof}

\begin{defn}
A graph $G$ is called a \emph{1-clique sum of complete graphs} if there exists a partition $(A,B,C)$ of its vertex set such that 

\begin{enumerate}[i)]
\item  $|C|=1$, 
\item $C$ separates $A$ and $B$, 
\item the subgraphs induced by $A\cup C$ and $B\cup C$ are either complete graphs or 1-clique sum of complete graphs. 
\end{enumerate}
\end{defn}

The second conjecture in \cite{Sturmfels(2010)} which we prove in this paper is as follows:

\begin{thm}(Conjecture 4.4, \cite{Sturmfels(2010)})\label{thm: theConjecture}
The prime ideal $P_G$ of an undirected Gaussian graphical model is 
generated in degree $\leq$ 2 if and only if each connected component 
of the graph $G$ is a 1-clique sum of complete graphs. 
\end{thm}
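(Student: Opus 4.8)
The plan is to prove the two directions of Theorem~\ref{thm: theConjecture} separately, with the ``only if'' direction being the more delicate one.

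\medskip

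\noindent\textbf{The ``if'' direction.} Suppose each connected component of $G$ is a $1$-clique sum of complete graphs. First I would reduce to the connected case, since $P_G$ for a disconnected graph is generated by the ideals of its components together with the $2\times 2$ minors expressing independence across components (all of which are degree $\leq 2$); in fact independence across components is the special case $C=\emptyset$ of a $1$-clique sum so this is subsumed. Then I would induct on the number of vertices. If $G$ is complete, $P_G = \langle 0 \rangle$ and there is nothing to prove. Otherwise write $G$ as a $1$-clique sum with partition $(A,B,C)$, $|C|=1$, where $G_1 = G_{A\cup C}$ and $G_2 = G_{B \cup C}$ are themselves $1$-clique sums of complete graphs on fewer vertices. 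By induction $P_{G_1}$ and $P_{G_2}$ are generated in degree $\leq 2$. The key claim is then that Conjecture~\ref{conj:false} holds in this restricted setting — i.e., $P_G = P_{G_1} + P_{G_2} + \langle 2\times 2\text{-minors of }\Sigma_{A\cup C, B \cup C}\rangle$ even though it fails for general $1$-clique sums — and moreover that this ideal equals a \emph{toric} ideal. The natural way to establish this is to produce an explicit parametrization: for a $1$-clique sum along a single vertex $v$, the covariance matrix of $G$ is assembled from those of $G_1$ and $G_2$ by the rank-one ``gluing'' $\sigma_{ab} = \sigma_{av}\sigma_{vb}/\sigma_{vv}$ for $a \in A$, $b \in B$. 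Iterating down to complete-graph pieces, one sees $\mathcal{L}^{-1}$ for a $1$-clique sum of complete graphs is (up to the inversion map on each block) a monomial-type variety, and I would identify a SAGBI basis / initial algebra argument — as flagged by the paper's keywords — showing $P_G$ is toric with the stated quadratic generators, then invoke that a toric ideal of this combinatorial type (minors plus subideals) is generated in degree $\leq 2$.

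\medskip

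\noindent\textbf{The ``only if'' direction.} Suppose $P_G$ is generated in degree $\leq 2$; I must show every component of $G$ is a $1$-clique sum of complete graphs. I would again reduce to $G$ connected and argue by contrapositive/induction: if $G$ is connected but \emph{not} a $1$-clique sum of complete graphs, exhibit a minimal generator of $P_G$ of degree $\geq 3$. The structural input is a characterization of connected graphs that are \emph{not} $1$-clique sums of complete graphs: such a graph must contain an induced subgraph $H$ that is ``$2$-connected but not complete'' in a suitable sense — concretely, an induced cycle $C_k$ with $k \geq 4$, or a more complex $2$-connected obstruction. For the cleanest obstruction, the induced $4$-cycle, one computes (or cites from \cite{Sturmfels(2010)}) that $P_{C_4}$ requires a cubic generator. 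The crux is then a \emph{persistence} argument: if $H$ is an induced subgraph of $G$ and $P_H$ has a minimal generator of degree $d$, then $P_G$ has a minimal generator of degree $\geq d$. This uses that setting $\sigma_{ij} = 0$ for edges not in $H$ restricted positions, or rather projecting/specializing appropriately, sends $P_G$ onto $P_H$ in a way that cannot lower the minimal generating degree — one must be careful here because eliminating variables can in principle \emph{raise} degrees, so the argument needs the reverse inclusion, namely that $P_H$ is (a specialization of) a subideal of $P_G$ via the induced-subgraph covariance submatrix, combined with the fact that $P_G \cap \mathbb{R}[\Sigma_{H}] = P_H$.

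\medskip

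\noindent\textbf{Main obstacle.} I expect the hard part to be the ``only if'' direction, specifically the persistence lemma: controlling minimal generating degrees under passage to induced subgraphs is genuinely subtle, and one also needs to enumerate \emph{all} obstructions to being a $1$-clique sum of complete graphs (not merely the $4$-cycle) and check each forces a cubic — this likely requires a graph-theoretic lemma saying that a connected graph failing the $1$-clique-sum-of-complete-graphs property always contains one of a short list of induced subgraphs each of whose ideal has a cubic minimal generator. On the ``if'' side, the subtlety is rather in proving that Conjecture~\ref{conj:false} — false in general — nonetheless holds when the summands are $1$-clique sums of complete graphs; here the toric/SAGBI structure is what rescues it, since for toric ideals the relevant saturation and primary-decomposition pathologies that break the general conjecture do not occur.
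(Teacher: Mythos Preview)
Your difficulty assessment is backwards. The ``only if'' direction is already proved in \cite{Sturmfels(2010)} and the paper simply cites it: a graph that is not a $1$-clique sum of complete graphs admits a separating set $C$ with $|C|\geq 2$, and the corresponding conditional independence ideal contributes a minimal generator of degree $|C|+1\geq 3$. No induced-subgraph persistence lemma is needed. All the work in the paper goes into the ``if'' direction.

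More seriously, your proposed inductive proof of the ``if'' direction has a genuine gap: the key claim that Conjecture~\ref{conj:false} holds when $G_1$ and $G_2$ are themselves block graphs is \emph{false}. The paper's Example~\ref{ex:chain} is precisely this situation --- the path on four vertices is a $1$-clique sum of complete graphs (in fact of edges $K_2$), yet $P_{G_1}+P_{G_2}+\langle 2\times 2\text{ minors of }\Sigma_{A\cup C,B\cup C}\rangle$ has only four generators while $P_G$ has five. So you cannot assemble $P_G$ inductively from the summands along the clique-sum decomposition; the missing generators involve variables from both sides simultaneously in a way not captured by the displayed ideal. This is exactly why the paper abandons the clique-sum induction entirely.

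What the paper does instead: it defines a monomial ``shortest path map'' $\psi$ (sending $\sigma_{ij}$ to a monomial recording the unique geodesic $i\leftrightarrow j$ in the block graph), proves directly that the toric ideal $SP_G:=\ker\psi$ equals $CI_G$ --- first for block graphs with a single central vertex via an edge-ring Gr\"obner basis argument, then for arbitrary block graphs via a Markov-basis/fiber-connectivity argument that localizes at each central vertex in turn --- and finally shows $CI_G=P_G$ by observing that both are prime of the same dimension (namely $n+|E|$, since $\Sigma\mapsto\Sigma^{-1}$ is birational). The SAGBI basis statement is a \emph{corollary} of $SP_G=P_G$, not the mechanism of the proof. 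Your instinct that a toric/SAGBI structure is involved is correct, but the route to it is a global monomial parametrization coming from unique shortest paths in block graphs, not an induction along clique sums.
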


The ``only if" part of the conjecture is proved in 
\cite{Sturmfels(2010)}.  That is, it is shown there that a graph that is not the $1$-clique sum of complete graphs
must have a generator of degree $\geq 3$.  Such a generator comes from a conditional
independence statement with $\#C \geq 2$.  

For 1-clique sum of complete graphs, 
the conditional independence ideal can be written as
\[
CI_G= \langle \bigcup_{\substack{(A,B,C)\in C_1(G)}} 2\times 2 \text{ minors of } \Sigma_{A\cup C, B\cup C} \rangle,
\]
where $C_1(G)$ denotes the set of all 1-clique partitions of $G$. 
In this paper, our main result will be a proof that $CI_G = P_G$  when 
$G$ is a 1-clique sum of complete graphs. 

The expression ``$1$-clique sum of complete graphs'' is somewhat cumbersome.
We use the alternate expression \emph{block graphs} for most of the paper,
as that is a commonly used name in the literature.  One important property
of block graphs is that there is a unique locally shortest path between any pair
of vertices in a connected component of a block graph.

\begin{ex}
We illustrate the structure of Theorem \ref{thm: theConjecture} with an example. 
Let $G=([6],E)$ be the block graph as shown in Figure \ref{fig: Calculating CI_G}. 
This block graph $G$ has four 1-clique partitions as follows:

Partition 1: $A=\{1,2\},B=\{4,5,6\},C=\{3\},$
Partition 2: $A=\{1,2,3\},B=\{5,6\},C=\{4\}$

Partition 3: $A=\{1,2,3,5\},B=\{6\},C=\{4\},$
Partition 4: $A=\{1,2,3,6\},B=\{5\},C=\{4\}.$

The associated matrices are as follows:

\[
\text{For 1}:
\Sigma_{A\cup C,B\cup C}=
\begin{bmatrix}
    \sigma_{13}       & \sigma_{14} & \sigma_{15} & \sigma_{16} \\
    \sigma_{23}       & \sigma_{24} & \sigma_{25} & \sigma_{26} \\
    \sigma_{33}       & \sigma_{34} & \sigma_{35} & \sigma_{36} 
    \end{bmatrix},
2:
\Sigma_{A\cup C,B\cup C}=
\begin{bmatrix}
    \sigma_{14}       & \sigma_{15} & \sigma_{16}  \\
    \sigma_{24}       & \sigma_{25} & \sigma_{26}  \\
    \sigma_{34}       & \sigma_{35} & \sigma_{36}  \\
    \sigma_{44}       & \sigma_{45} & \sigma_{46}     
    \end{bmatrix}  
\]
    
\[
\hspace{-0.1cm}3:\Sigma_{A\cup C,B\cup C}=
\begin{bmatrix}
    \sigma_{14}       & \sigma_{16}   \\
    \sigma_{24}       & \sigma_{26}   \\
    \sigma_{34}       & \sigma_{36}   \\
    \sigma_{44}       & \sigma_{46}   \\
    \sigma_{45}       & \sigma_{56}    
    \end{bmatrix},
\hspace{1.7cm}4:\Sigma_{A\cup C,B\cup C}=
\begin{bmatrix}
    \sigma_{14}       & \sigma_{15}   \\
    \sigma_{24}       & \sigma_{25}   \\
    \sigma_{34}       & \sigma_{35}   \\
    \sigma_{44}       & \sigma_{45}   \\
    \sigma_{46}       & \sigma_{56}    
    \end{bmatrix}.
\]

The ideal $CI_G = P_G$ is the ideal generated by the 
$2\times 2$ minors of all four matrices: 
\begin{eqnarray*}
&CI_G=\langle \sigma_{13}\sigma_{24}-\sigma_{14}\sigma_{23},\sigma_{13}\sigma_{25}-\sigma_{15}\sigma_{23},\sigma_{13}\sigma_{26}-\sigma_{16}\sigma_{23},\sigma_{14}\sigma_{25}-\sigma_{15}\sigma_{24},\sigma_{23}\sigma_{34}-\sigma_{24}\sigma_{33},\\
&\sigma_{23}\sigma_{35}-\sigma_{25}\sigma_{33},\sigma_{23}\sigma_{36}-\sigma_{26}\sigma_{33},\sigma_{24}\sigma_{35}-\sigma_{25}\sigma_{34},\sigma_{24}\sigma_{36}-\sigma_{26}\sigma_{34},\sigma_{25}\sigma_{36}-\sigma_{26}\sigma_{35},\\
&\sigma_{13}\sigma_{34}-\sigma_{14}\sigma_{33},\sigma_{13}\sigma_{35}-\sigma_{15}\sigma_{33},\sigma_{13}\sigma_{36}-\sigma_{16}\sigma_{33},\sigma_{14}\sigma_{35}-\sigma_{15}\sigma_{34},\sigma_{14}\sigma_{36}-\sigma_{16}\sigma_{34},\\
&\sigma_{15}\sigma_{36}-\sigma_{16}\sigma_{35},\sigma_{14}\sigma_{45}-\sigma_{15}\sigma_{44},\sigma_{14}\sigma_{46}-\sigma_{16}\sigma_{44},\sigma_{15}\sigma_{46}-\sigma_{16}\sigma_{45},\sigma_{24}\sigma_{45}-\sigma_{25}\sigma_{44},\\
&\sigma_{24}\sigma_{46}-\sigma_{26}\sigma_{44},\sigma_{25}\sigma_{46}-\sigma_{26}\sigma_{45},\sigma_{34}\sigma_{45}-\sigma_{35}\sigma_{44},\sigma_{34}\sigma_{46}-\sigma_{36}\sigma_{44},\sigma_{35}\sigma_{46}-\sigma_{36}\sigma_{45},\\
&\sigma_{14}\sigma_{56}-\sigma_{16}\sigma_{45},\sigma_{24}\sigma_{56}-\sigma_{26}\sigma_{45},\sigma_{34}\sigma_{56}-\sigma_{36}\sigma_{45},\sigma_{44}\sigma_{56}-\sigma_{46}\sigma_{45},\sigma_{14}\sigma_{56}-\sigma_{15}\sigma_{46},\\
&\sigma_{24}\sigma_{56}-\sigma_{25}\sigma_{46},\sigma_{34}\sigma_{56}-\sigma_{35}\sigma_{46},\sigma_{44}\sigma_{56}-\sigma_{45}\sigma_{46},
\sigma_{14}\sigma_{26}-\sigma_{16}\sigma_{24},\sigma_{15}\sigma_{26}-\sigma_{16}\sigma_{25}
\rangle.
\end{eqnarray*}

\begin{figure}
\begin{center}
\begin{tikzpicture}
  [scale=.6,auto=left,every node/.style={circle,fill=blue!20}]
  \node (n4) at (9,0)  {4};
  \node (n1) at (0,0) {1};
  \node (n2) at (3,3)  {2};
  \node (n3) at (5.5,0)  {3};
  \node (n5) at (12,3)  {5};
  \node (n6) at (12,-3)  {6};
\foreach \from/\to in {n1/n2,n2/n3,n3/n4,n1/n3,n4/n5,n4/n6}
    \draw (\from) -- (\to);
\end{tikzpicture}
\caption{\label{fig: Calculating CI_G} }
\end{center}
\end{figure}

\end{ex}

The history of trying to characterize constraints on the covariance matrices in Gaussian
graphical models goes back to \cite{Kelley(1935)} and the discovery of the pentad constraints
in the  factor analysis model.  Since then, the study of the constraints on Gaussian graphical 
models has seen many results including the deeper study of the factor analysis model in \cite{Drton(2007)},
the study of directed graphical models and characterization of tree models in \cite{Sullivant(2008)},
and the complete characterization of the determinantal constraints that apply to Gaussian graphical models in \cite{Sullivant(2010)}.

The study of the generators of the ideals $P_G$ is an important problem
for constraint-based inference for inferring the structure of the underlying graph
from data.  Elements of the vanishing ideal are tested to determine if the graph
has certain underlying features, which are then used to reconstruct the entire graph.
A prototypical example of this method is the TETRAD procedure in \cite{Spirtes(2000)}
which specifically tests the degree $2$ generators (tetrads) of the vanishing ideals of
Gaussian graphical models for directed graphs.  Our main result in this paper gives a characterization
of which undirected graphs the tetrads are sufficient to characterize all distributions
from the model, and is a key structural result for trying to use constraint based 
inference for undirected Gaussian graphical models.   Developing characterizations of the
vanishing ideals of Gaussian graphical models by higher order constraints 
(for example, determinantal constraints in \cite{Drton(2008)} and \cite{Sullivant(2010)} )
has the potential to extend constraint-based inference beyond tetrad constraints.

This paper is organized as follows. We give two counterexamples to Conjecture \ref{conj:false} 
in Section \ref{sec:conjfalse}. 
In Section \ref{sec:geodesic} we define a rational map 
$\rho$ and its pullback map $\rho^*$, whose kernel is the ideal $P_G$.
We review properties of  block graphs including the existence of a unique shortest path. 
Using this uniqueness property, we define the ``shortest path map" $\psi$ and the initial term map $\phi$ and show that the two maps have the same kernel. 
We prove that the kernel of $\psi$ is equal to the ideal $CI_G$ for block graphs with one central vertex in Section \ref{sec:1central}. This result is generalized for all block graphs in Section \ref{sec:allgraphs}. 
Finally, in Section \ref{sec:SAGBI} we put all the pieces together to
prove Theorem \ref{thm: theConjecture}
using the results proved in the previous sections. We end the section by showing that the set $F$ forms a SAGBI basis ( Subalgebra Analog to Gr\"obner Basis for Ideals ) using the initial term map.

%%%%%%%%%%%%%%%%%%%%%%%%%%%%%%%%%%%%%%%%%%%%%%%%%%%%%%%%%%%%%%%%%%%%%%%%%%%%%%%%%%%%%%%%%%%%%%%%%%%%%%%%%%%%%%%%%%%%%%%%%%%%%%%%%%%%%%%

\section{Counterexamples to conjecture \ref{conj:false} }  \label{sec:conjfalse}

We first begin with some counterexamples to Conjecture \ref{conj:false}.  Initial
counterexamples suggest a modification of Conjecture \ref{conj:false} might be 
true, but we show that that strengthened version is also false.  This last
counterexample suggests that it is unlike that there is a repair for the conjecture.

\begin{ex}\label{ex:bigger}
Let $G=([6],E)$ be the graph as shown in Figure \ref{fig: counterexample 1}. 
Here $A=\{1,2\}, B=\{4,5,6\}$ and $C=\{3\}$. Computing the ideals $P_G$ and $P_{G_1}+P_{G_2}+ \langle 2\times 2 \text{ minors of } \Sigma_{A\cup C,B\cup C} \rangle$, we get   
\begin{eqnarray*}
P_G &=& \langle \sigma_{14}\sigma_{25}\sigma_{46}-
\sigma_{14}\sigma_{26}\sigma_{45}-
\sigma_{15}\sigma_{24}\sigma_{46}+
\sigma_{15}\sigma_{26}\sigma_{44}+
\sigma_{16}\sigma_{24}\sigma_{45}-
\sigma_{16}\sigma_{25}\sigma_{44},\\
&&\sigma_{24}\sigma_{45}\sigma_{56}-
\sigma_{24}\sigma_{46}\sigma_{55}-
\sigma_{25}\sigma_{44}\sigma_{56}+
\sigma_{25}\sigma_{46}\sigma_{45}+
\sigma_{26}\sigma_{44}\sigma_{55}-
\sigma_{26}\sigma_{45}^2 \rangle \\
&&+ P_{G_1}+P_{G_2} + \langle 2\times 2 \text{ minors of } \Sigma_{A\cup C,B\cup C} \rangle.
\end{eqnarray*}

\begin{figure}
\begin{center}
\begin{tikzpicture}
  [scale=1.5,auto=left,every node/.style={circle,fill=blue!20}]
\node (n1) at (0,1.5)  {1};
\node (n2) at (0,0) {2};
\node (n3) at (1,.75)  {3};
\node (n4) at (1.5,1.5)  {4};
\node (n5) at (1.5,0)  {5};
\node (n6) at (3,.75)  {6};

  \foreach \from/\to in {n1/n2,n1/n3,n2/n3,n3/n4,n3/n5,n4/n5,n4/n6,n5/n6}
    \draw (\from) -- (\to);
\end{tikzpicture}
\end{center}
\caption{\label{fig: counterexample 1} }
\end{figure}

\end{ex}

Note that even for some small block graphs Conjecture \ref{conj:false} is false.

\begin{ex}\label{ex:chain}
Consider the graph $G = ([4], E)$ which is a path of length $4$.  
Taking $c = \{3\}$, we get a decomposition of $G$ into $G_1$ and $G_2$ which are paths of 
length $3$ and $2$ respectively.  A quick calculation in Macaulay2 [\cite{Grayson(2017)}]
shows that $P_G = CI_G$ is generated by $5$ quadratic binomials.  However,
\[
P_{G_1} + P_{G_2} + 
\langle 2\times 2 \text{-minors of } \Sigma _{\{1,2, 3\}, \{3,4\} } \rangle
\]
has only $4$ minimal generators.
\end{ex}

Although $P_G$ is not equal to $P_{G_1}+P_{G_2}+\langle 2\times 2 \text{ minors of } 
\Sigma_{A\cup C,B\cup C} \rangle $ in these examples, we observe that the extra generators of $P_G$ are
also determinantal conditions arising from submatrices of $\Sigma$.  Furthermore,
they can be seen as being implied by the original rank conditions
in $P_{G_1}$ and $P_{G_2}$ plus the rank conditions that are implied by 
$\langle 2\times 2 \text{ minors of } 
\Sigma_{A\cup C,B\cup C} \rangle$.

For instance, in Example \ref{ex:chain}, the ideal $R_G =  P_{G_1} + P_{G_2} + 
\langle 2\times 2 \text{-minors of } \Sigma _{\{1,2, 3\}, \{3,4\} } \rangle$
is generated by the $2 \times 2$ minors of the two matrices
\[
\begin{pmatrix}
\sigma_{12} &   \sigma_{13} \\
\sigma_{22} & \sigma_{23}
\end{pmatrix}  \quad \quad \mbox{ and }\quad \quad
\begin{pmatrix}
\sigma_{13} &   \sigma_{14} \\
\sigma_{23} & \sigma_{24}  \\
\sigma_{33} & \sigma_{34}
\end{pmatrix}.
\]
Whereas the $P_G$ is generated by the $2 \times 2$ minors of the two matrices.
\[
\begin{pmatrix}
\sigma_{12} &   \sigma_{13} & \sigma_{14} \\
\sigma_{22} & \sigma_{23} & \sigma_{24}
\end{pmatrix}  \quad \quad \mbox{ and }\quad \quad
\begin{pmatrix}
\sigma_{13} &   \sigma_{14} \\
\sigma_{23} & \sigma_{24}  \\
\sigma_{33} & \sigma_{34}
\end{pmatrix}.
\]
However, we can take the generators $R_G$ and, assuming that $\sigma_{33}$ is not zero (which
is valid since $\Sigma$ is positive definite), 
we see that this implies that 
\[
\begin{pmatrix}
\sigma_{12} &   \sigma_{13} & \sigma_{14} \\
\sigma_{22} & \sigma_{23} & \sigma_{24}
\end{pmatrix}
\]
must be a rank $1$ matrix.  

Similarly, in Example \ref{ex:bigger}, 
we know that $(\{3\},\{6\},\{4,5\})$ is a separating partition 
for the subgraph $G_2$. So, the ideal $J_{\{3\}\indep \{6\}|\{4,5\}}$ 
is contained in $P_{G_2}$, which implies that rank of the submatrix 
$\Sigma_{\{3,4,5\},\{4,5,6\}}$ is $2$. Similarly, $(\{1,2\},\{4,5,6\},\{3\})$ 
is a separating partition of $G$, which implies that 
rank of the submatrix $\Sigma_{\{1,2,3\},\{3,4,5,6\}}$ is $1$. 
Now, as $\Sigma_{\{1,2,3\},\{4,5,6\}}$ is a submatrix of $\Sigma_{\{1,2,3\},\{3,4,5,6\}}$,
we can say that $\Sigma_{\{1,2,3\},\{4,5,6\}}$ also has rank $1$. Hence, from these two rank constraints 
and the added assumption that $\sigma_{33}$ is not zero we can conclude that the submatrix $\Sigma_{\{1,2,4,5\},\{4,5,6\}}$ has rank $2$.

The details of these examples suggest that a better version of the conjecture might be
\begin{eqnarray*}
P_G&=& {\rm Lift}(P_{G_1})+ {\rm Lift}(P_{G_2}) +\langle 2\times 2 \text{ minors of } \Sigma_{A\cup C,B\cup C} \rangle.
\end{eqnarray*}
Here ${\rm Lift}(P_{G_1})$ denotes some operation that takes the generators of $P_{G_1}$ 
and extends them to the whole graph, analogous to how the toric fiber product in \cite{Sullivant(2007)}
lifts generators for reducible hierarchical models on discrete variables [\cite{Dobra(2004),Hosten(2002)}].   
We do not make precise what this lifting operation could be, because if it preserves the degrees
of generating sets
the following example shows that
no precise version of this notion could make this conjecture be true.

\begin{ex}
Let $G=([7],E)$ be the graph as shown in Figure \ref{fig: counterexample 2} and let $(A,B,C)$ be the partition $(\{1,2,3\},\{5,6,7\},\{4\})$.
Computing the vanishing ideal, we get $P_G = CI_G$, but that among the
minimal generators of $P_G$ is one degree $4$ polynomial $m$
where 
\begin{eqnarray*}
m&=&\hspace{.5cm}\sigma_{17}^2\sigma_{23}\sigma_{56} -\sigma_{13}\sigma_{17}\sigma_{27}\sigma_{56}-\sigma_{12}\sigma_{17}\sigma_{37}\sigma_{56}+\sigma_{11}\sigma_{27}\sigma_{37}\sigma_{56} -\sigma_{16}\sigma_{17}\sigma_{23}\sigma_{57}
\\
&+&\sigma_{13}\sigma_{16}\sigma_{27}\sigma_{57}+\sigma_{12}\sigma_{16}\sigma_{37}\sigma_{57}-\sigma_{11}\sigma_{26}\sigma_{37}\sigma_{57}-\sigma_{15}\sigma_{17}\sigma_{23}\sigma_{67}+\sigma_{13}\sigma_{15}\sigma_{27}\sigma_{67}
\\
&+&\sigma_{12}\sigma_{15}\sigma_{37}\sigma_{67}-\sigma_{11}\sigma_{25}\sigma_{37}\sigma_{67}-\sigma_{12}\sigma_{13}\sigma_{57}\sigma_{67}+\sigma_{11}\sigma_{23}\sigma_{57}\sigma_{67}+\sigma_{15}\sigma_{16}\sigma_{23}\sigma_{77}
\\
&-&\sigma_{13}\sigma_{15}\sigma_{26}\sigma_{77}-\sigma_{12}\sigma_{15}\sigma_{36}\sigma_{77}+\sigma_{11}\sigma_{25}\sigma_{36}\sigma_{77}+\sigma_{12}\sigma_{13}\sigma_{56}\sigma_{77}-\sigma_{11}\sigma_{23}\sigma_{56}\sigma_{77}.
\end{eqnarray*}
As both $P_{G_1}$ and $P_{G_2}$ are generated by polynomials of degree $3$, 
this degree $4$ polynomial could not be obtained from a degree preserving lifting operation.

\begin{figure}
\begin{center}
\begin{tikzpicture}
  [scale=1.5,auto=left,every node/.style={circle,fill=blue!20}]
\node (n1) at (-1,.75) {1};
\node (n2) at (0,1.5)  {2};
\node (n3) at (0,0) {3};
\node (n4) at (1,.75)  {4};
\node (n5) at (2,1.5)  {5};
\node (n6) at (2,0)  {6};
\node (n7) at (3,.75)  {7};

  \foreach \from/\to in {n1/n2,n1/n3,n2/n4,n3/n4,n4/n5,n4/n6,n5/n7,n6/n7}
    \draw (\from) -- (\to);

\end{tikzpicture}
\end{center}
\caption{\label{fig: counterexample 2} }
\end{figure}
\end{ex}

%%%%%%%%%%%%%%%%%%%%%%%%%%%%%%%%%%%%%%%%%%%%%%%%%%%%%%%%%%%%%%%%%%%%%%%%%%%%%%%%%%%%%%%%%%%%%%%%%%%%%%%%%%%%%%%%%%%%%%%%%%%%%%%%%%%%%%%

\section{Shortest path in block graphs} \label{sec:geodesic}

Our goal for the rest of the paper is to prove Theorem \ref{thm: theConjecture}.
To do this, we need to phrase some parts in the language of commutative
algebra.  The vanishing ideal is the kernel of a certain ring homomorphism,
or the presentation ideal of a certain $\rr$-algebra.  
We will show that we can pass to a suitable initial algebra and analyze the
combinatorics of the resulting toric ideal.  This is proven in this section and
those that follow.

We begin this section by giving an overview of toric ideals. 
We then define a rational map $\rho$ such that 
the kernel of its pullback map gives us the ideal $P_G$. 
We also show the existence of a unique shortest path between any
two vertices of a block graph. 
This property allows us to define the ``shortest path map". 

Let $\mathcal{A}=\{a_1,a_2,...,a_n\}$ be a fixed subset of $\mathbb{Z}^d$. We consider the homomorphism
\begin{eqnarray*}
\pi: \mathbb{N}^n\rightarrow \mathbb{Z}^d, \hspace{1cm} u=(u_1,...,u_n)\mapsto u_1a_1+...+u_na_n.
\end{eqnarray*}
This map $\pi$ lifts to a homomorphism of subgroup algebras:
\begin{eqnarray*}
\hat{\pi}: \mathbb{R}[x_1,...,x_n]\rightarrow \mathbb{R}[t_1,...,t_d,t_1^{-1},...,t_d^{-1}], \hspace{.75cm}
x_i\mapsto t^{a_i}.
\end{eqnarray*}
The kernel of $\hat{\pi}$ is called the \textit{toric ideal} of $\mathcal{A}$. By Lemma 4.1 of \cite{Sturmfels(1996)} we know that the toric ideal can be generated by the set of binomials of the form 
\begin{eqnarray*}
\{x^u-x^v : u,v \in \mathbb{N}^n \text{ with } \pi(u)=\pi(v) \}.
\end{eqnarray*}
From the construction above we observe that any monomial map can be written as $\hat{\pi}$ for some given set of vectors $\mathcal{A}$. This gives us that the kernel of every monomial map is a toric ideal.

Now, let $\rr[K] = \rr[k_{11}, k_{12},...,k_{nn}]$ denote the polynomial
ring in the entries of the concentration matrix $K$, and $\rr(K)$ its
fraction field.

We define the rational map $\rho: \mathcal{L}\dashrightarrow{}\mathcal{L}^{-1}$ as follows:
\[
\rho(K)=\rho(k_{11},k_{12},\ldots,k_{nn})
=  
(\rho_{11}(k_{11},k_{12},\ldots,k_{nn}),\rho_{12}(k_{11},k_{12},\ldots,k_{nn}),...,\rho_{nn}(k_{11},k_{12},\ldots,k_{nn})),
\]
where $\rho_{ij}\in \mathbb{R}(K)$ is the 
$(i,j)$ coordinate of $K^{-1}$. The rational map does not yield a well defined function from $\mathcal{L}$ to $\mathcal{L}^{-1}$ as every matrix in $\mathcal{L}$ is not invertible (chapter 3, \cite{Hassett(2007)}).
Also note that the definition of $\rho$ depends on the underlying graph
$G$, since the zero pattern of $K$ is determined by $G$.

The \textit{pull-back} map of $\rho$ is
\[
\rho^*:\mathbb{R}[\Sigma] \rightarrow  \mathbb{R}(K), \quad 
\sigma_{ij} \mapsto  \rho_{ij}(K). 
\]
So, for each $p\in \mathbb{R}[\Sigma]$ and $K\in \mathcal{L}$,
\[\rho^*(p)(K)=p\circ \rho(K)
=p(\rho_{11}(K),\rho_{12}(K),...,\rho_{nn}(K)).
\]
Hence, we have
\[
P_G=\mathcal{I(L}^{-1})= \ker(\rho^*).
\]

For a given graph $G=([n],E)$, 
let $f_{ij}\in \mathbb{R}[K]$ be the polynomial defined as 
$\det(K)$ times the $(i,j)$ coordinate of the matrix $K^{-1}$. 
Let $F=\{f_{ij}:1\leq i\leq j\leq n\}$. So, the map $\rho^*$ can be written as
\[
\rho^*:\mathbb{R}[\Sigma] \rightarrow  \mathbb{R}(K) \,  \quad 
\rho^*(\sigma_{ij}) = \frac{1}{\det(K)}\cdot f_{ij}.
\]
As $1/\det(K)$ is a constant which is present in the image of every $\sigma_{ij}$, 
removing that factor from every image would not change the kernel of 
$\rho^*$. Hence, we change the map $\rho^*$ as 
\[
\rho^*:\mathbb{R}[\Sigma] \rightarrow \mathbb{R}[F], \quad  \rho^*(\sigma_{ij})= f_{ij},
\]
where $\mathbb{R}[F]=\mathbb{R}[f_{11},f_{12},...,f_{nn}]\subseteq \mathbb{R}[K]$.

\begin{ex} \label{ex:illustrate}
Let $G=([4],E)$ be a graph with 4 vertices as shown in Fig \ref{fig: firstexample}. The matrices $\Sigma$ and $K$ for this graph are:
\[
\Sigma=
\begin{bmatrix}
    \sigma_{11}       & \sigma_{12} & \sigma_{13} & \sigma_{14} \\
    \sigma_{12}       & \sigma_{22} & \sigma_{23} & \sigma_{24} \\
    \sigma_{13}       & \sigma_{23} & \sigma_{33} & \sigma_{34} \\
    \sigma_{14}       & \sigma_{24} & \sigma_{34} & \sigma_{44}
    \end{bmatrix},  \quad \quad 
K=
\begin{bmatrix}
    k_{11}       & k_{12} & k_{13} & 0 \\
    k_{12}       & k_{22} & k_{23} & 0 \\
    k_{13}       & k_{23} & k_{33} & k_{34} \\ 0       & 0 & k_{34} & k_{44}
    \end{bmatrix}.
\]

The ideal $P_G$ can be calculated by using the equation $\Sigma\cdot K=Id_4$ and eliminating
the $K$ variables.  

\begin{eqnarray*}
\langle\Sigma\cdot K-Id_4\rangle & =  & 
\langle
\sigma_{11}k_{11}+\sigma_{12}k_{12}+\sigma_{13}k_{13}-1, \sigma_{11}k_{12}+\sigma_{12}k_{22}+\sigma_{13}k_{23},
\ldots,  \\
&   & 
\quad \quad  \sigma_{14}k_{13}+\sigma_{24}k_{23}+\sigma_{34}k_{33}+\sigma_{44}k_{34},\sigma_{34}k_{33}+\sigma_{44}k_{44}-1
\rangle.
\end{eqnarray*}

\begin{comment}

\begin{eqnarray*}
\langle\Sigma\cdot K-I\rangle & =  & 
\langle
\sigma_{11}k_{11}+\sigma_{12}k_{12}+\sigma_{13}k_{13}-1, \sigma_{11}k_{12}+\sigma_{12}k_{22}+\sigma_{13}k_{23},
\sigma_{11}k_{13}+\sigma_{12}k_{23}\\
&&+\sigma_{13}k_{33}+\sigma_{14}k_{34},\sigma_{13}k_{33}+\sigma_{14}k_{44},\sigma_{12}k_{11}+\sigma_{22}k_{12}+\sigma_{23}k_{13},\sigma_{12}k_{12}+\sigma_{22}k_{22}+\sigma_{23}k_{23}\\
&&-1,\sigma_{12}k_{13}+\sigma_{22}k_{23}+\sigma_{23}k_{33}+\sigma_{24}k_{34},\sigma_{23}k_{33}+\sigma_{24}k_{44},\sigma_{13}k_{11}+\sigma_{23}k_{12}+\sigma_{33}k_{13},\sigma_{13}k_{12}\\
&&+\sigma_{23}k_{22}+\sigma_{33}k_{23},\sigma_{13}k_{13}+\sigma_{23}k_{23}+\sigma_{33}k_{33}+\sigma_{34}k_{34}-1,\sigma_{33}k_{33}+\sigma_{34}k_{44},\sigma_{14}k_{11}\\
&&+\sigma_{24}k_{12}
+\sigma_{34}k_{13},\sigma_{14}k_{12}+\sigma_{24}k_{22}+\sigma_{34}k_{23},\sigma_{14}k_{13}+\sigma_{24}k_{23}+\sigma_{34}k_{33}+\sigma_{44}k_{34},\sigma_{34}k_{33}\\
&&+\sigma_{44}k_{44}-1
\rangle.
\end{eqnarray*}

\end{comment}

Eliminating the $K$ variables, we get 
\[
P_G \, \, = \, \,  \langle\Sigma\cdot K-Id_4 \rangle  \cap  \rr[\Sigma] \, \,  = \, \, 
 \langle \sigma_{13}\sigma_{34}-\sigma_{14}\sigma_{33} ,
 \sigma_{23}\sigma_{34}-\sigma_{24}\sigma_{33} ,
 \sigma_{14}\sigma_{23}-\sigma_{13}\sigma_{24} \rangle.
\]

From the map $\rho^*$, we have 
\begin{eqnarray}  \label{eqn:fs}
\begin{split}
 f_{11} &=  \underline{k_{22}k_{33}k_{44}}-k_{22}k_{34}^2-k_{23}^2k_{44} \\
 f_{22} &=  \underline{k_{11}k_{33}k_{44}}-k_{11}k_{34}^2-k_{13}^2k_{44} \\
 f_{33} &=  \underline{k_{11}k_{22}k_{44}}-k_{44}k_{12}^2 \\
 f_{44} &= \underline{k_{11}k_{22}k_{33}}-k_{11}k_{23}^2-k_{12}^2k_{33} \\  & +k_{12}k_{13}k_{23}+k_{13}k_{12}k_{23}-k_{13}^2k_{22} \\
  \end{split}
  \quad \quad
 \begin{split}
  f_{12} &= -\underline{ k_{12}k_{33}k_{44}}-k_{12}k_{34}^2-k_{23}k_{13}k_{44} \\
 f_{13} &=  -\underline{k_{13}k_{22}k_{44}}+k_{12}k_{23}k_{44} \\
 f_{14} &=  \, \, \, \,  \underline{k_{13}k_{34}k_{22}} - k_{12}k_{23}k_{34} \\
 f_{23} &=  -\underline{k_{23}k_{11}k_{44}} + k_{12}k_{13}k_{44} \\
 f_{24} &= \, \, \, \,  \underline{k_{23}k_{34}k_{11}}-k_{34}k_{13}k_{12} \\
 f_{34} &= -\underline{ k_{34}k_{11}k_{22}}+k_{34}k_{12}^2 \\
 \end{split}
\end{eqnarray}
 where $f_{ij}$ is $\det(K)$ times the $(i,j)$ coordinate of $K^{-1}$.
Evaluating the kernel of $\rho^*$, we get
\[
\ker(\rho^*) \, \,= \, \, \langle \sigma_{13}\sigma_{34}-\sigma_{14}\sigma_{33} ,
\sigma_{23}\sigma_{34}-\sigma_{24}\sigma_{33} ,\sigma_{14}\sigma_{23}-\sigma_{13}\sigma_{24} \rangle
\]
which is same as the ideal $P_G$. Note that $G$ is a block graph with a 
single $1$-clique sum decomposition. As the generators of $P_G$ are the 
$2\times 2$ minors of $\Sigma_{\{1,2,3\}, \{3,4\} }$, 
the conjecture holds for this example.

\begin{figure}
\begin{center}
\begin{tikzpicture}
  [scale=.6,auto=left,every node/.style={circle,fill=blue!20}]
  \node (n4) at (9,0)  {4};
  \node (n1) at (0,0) {1};
  \node (n2) at (3,3)  {2};
  \node (n3) at (5.5,0)  {3};

  \foreach \from/\to in {n1/n2,n2/n3,n3/n4,n1/n3}
    \draw (\from) -- (\to);

\end{tikzpicture}
\end{center}
\caption{\label{fig: firstexample} }
\end{figure}
\end{ex}

Observe that in Example \ref{ex:illustrate}, 
each $f_{ij}$ contains a monomial which corresponds to the shortest 
path from $i$ to $j$ in the graph $G$ along with loops at the vertices 
not in the path. For example, $f_{24}$ has the monomial 
$k_{23}k_{34}k_{11}$ where $k_{23}k_{34}$ corresponds to the 
shortest path from $2$ to $4$ and $k_{11}$ corresponds to the 
loop at the vertex $1$.  In the (\ref{eqn:fs}), the underlined
term is this special term.

This turns out to be important  in our proofs, and we 
formalize this observation in Proposition \ref{prop:shortestpath}.
We now look at some properties of block graphs and 1-clique partitions 
in order to prove the existence of shortest paths.

\begin{prop}\label{prop:uniqueness}
If $G$ is a block graph, then for any two vertices $i$ and $j$ there exists 
a unique shortest path in $G$ connecting them. 
Further, if $(A,B,C)$ is a 1-clique partition of $G$ with 
$c\in C$ and if $i\in A$ and $j\in B$, then the unique shortest path from $i$ to $j$ can be decomposed 
into the unique shortest paths from $i$ to $c$ and $c$ to $j$.
\end{prop}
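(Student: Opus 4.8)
The plan is to prove both assertions simultaneously by induction on $n=|V(G)|$, taking complete graphs as the base case and exploiting the recursive description of a block graph as a $1$-clique sum of complete graphs. The one nontrivial ingredient is a \emph{separation lemma}: if $C=\{c\}$ separates $A$ from $B$ in $G$, then every simple path of $G$ with both endpoints in $A\cup C$ is contained in the induced subgraph $G_1=G[A\cup C]$ (and symmetrically for $B\cup C$). To see this, first note that the only vertex of $A\cup C$ adjacent to a vertex of $B$ is $c$ itself, since an edge from $a\in A$ to $b\in B$ would be a path from $A$ to $B$ avoiding $c$. Now if a simple path with endpoints in $A\cup C$ met $B$, pick a maximal run $v_p,\dots,v_q$ of consecutive $B$-vertices on it; because the endpoints lie in $A\cup C$ the flanking vertices $v_{p-1}$ and $v_{q+1}$ both exist and lie in $A\cup C$, each is adjacent to a $B$-vertex, and hence $v_{p-1}=v_{q+1}=c$, which are two distinct positions on the path carrying the same vertex, contradicting simplicity. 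Since shortest paths are simple, a shortest $i$-$j$ path of $G$ with $i,j\in A\cup C$ lies in $G_1$; as it realizes $d_G(i,j)$ inside $G_1$ and $G_1$ is an induced subgraph, $d_{G_1}(i,j)=d_G(i,j)$ and a path between $i$ and $j$ is shortest in $G$ if and only if it is shortest in $G_1$.

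\emph{Uniqueness.} If $G$ is complete (in particular if $n\le 2$), the shortest path between distinct $i,j$ is the edge $\{i,j\}$ and between $i=j$ the trivial path, so uniqueness is clear; moreover a complete graph on $\ge 3$ vertices has no $1$-clique partition with $A$ and $B$ both nonempty (adjacent vertices $a\in A$, $b\in B$ cannot be separated by a single vertex), so the decomposition claim is vacuous in this case. If $G$ is not complete, then by definition $G$ is a $1$-clique sum of block graphs $G_1=G[A\cup C]$ and $G_2=G[B\cup C]$, $C=\{c\}$, with $A,B$ nonempty, so $|V(G_1)|,|V(G_2)|<n$ and the inductive hypothesis applies to $G_1$ and $G_2$. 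If $i,j\in A\cup C$, then by the separation lemma every shortest $i$-$j$ path of $G$ lies in $G_1$ and is shortest there, hence is unique by induction; symmetrically if $i,j\in B\cup C$. If $i\in A$ and $j\in B$, every $i$-$j$ path meets $c$, so a shortest one $P$ meets $c$ exactly once and splits as $P=P_1\cup P_2$ with $P_1$ from $i$ to $c$ and $P_2$ from $c$ to $j$; a standard exchange argument (replacing $P_1$ or $P_2$ by a shorter path and concatenating yields a shorter $i$-$j$ walk, hence a shorter $i$-$j$ path) shows $P_1$ is a shortest $i$-$c$ path and $P_2$ a shortest $c$-$j$ path. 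By the separation lemma $P_1\subseteq G_1$ and $P_2\subseteq G_2$, and by the inductive hypothesis each is the unique shortest path in its subgraph, so $P$ is uniquely determined.

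\emph{Decomposition.} Let $(A,B,C)$ be any $1$-clique partition of $G$ with $c\in C$, $i\in A$, $j\in B$. By the uniqueness established above there is a single shortest $i$-$j$ path $P$; it meets $c$ because $c$ separates $A$ from $B$, and, being simple, meets $c$ exactly once, so $P=P_1\cup P_2$ with $P_1$ from $i$ to $c$ and $P_2$ from $c$ to $j$. The same exchange argument shows $P_1$ and $P_2$ are shortest paths from $i$ to $c$ and from $c$ to $j$, and by uniqueness they are \emph{the} shortest such paths. Hence the unique shortest $i$-$j$ path is precisely the concatenation of the unique shortest $i$-$c$ path with the unique shortest $c$-$j$ path, as claimed.

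\emph{Main obstacle.} The only genuine obstacle is the separation lemma, i.e.\ verifying that a shortest path between two vertices on the same side of the cut vertex $c$ never strays to the other side. Everything afterwards is bookkeeping: matching up $d_G$, $d_{G_1}$ and $d_{G_2}$ and invoking the inductive hypothesis, together with the usual exchange argument for concatenations of shortest paths. A minor point to keep in mind is that a complete graph on $\ge 3$ vertices admits no nontrivial $1$-clique partition, which is what makes complete graphs a legitimate base case and makes the decomposition assertion vacuous for them.
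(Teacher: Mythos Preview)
Your proof is correct and follows essentially the same inductive strategy as the paper: induct on $|V(G)|$, use the $1$-clique decomposition, and observe that any $i$--$j$ path must pass through the cut vertex $c$ so that the shortest path is forced to be the concatenation of the shortest $i$--$c$ and $c$--$j$ paths in the two pieces. Your version is more careful than the paper's---in particular your separation lemma (that a simple path with both endpoints in $A\cup C$ cannot stray into $B$) makes explicit why the shortest paths in $G$ and in the induced subgraphs $G_1,G_2$ agree, a point the paper's proof uses tacitly without justification.
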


\begin{proof}
We prove this by applying induction on the number of vertices in $G$. 
If $i$ and $j$ are connected by a single edge, 
then that is the unique shortest path. 
If they are not connected by a single edge, 
then there exists a 1-clique partition $(A,B,C)$ with $C = \{c\}$ which separates them. 
But as $A\cup C$ and $B\cup C$ are also block graphs and have fewer
vertices than $G$, by induction there exist unique shortest paths from 
$i$ to $c$ and from $c$ to $j$. But as any path from $i$ to $j$ must pass through 
$c$, the concatenation of the unique shortest paths from $i$ to $c$ and 
$c$ to $j$ would be the unique shortest path from $i$ to $j$. 

The second part follows from a property of unique shortest paths that 
if $c$ is a point on the path, then the subpaths from $i$ to $c$ and 
$c$ to $j$ are the unique shortest paths from $i$ to $c$ and $c$ to $j$ respectively.
\end{proof}

For the rest of the paper, we assume that $G$ is a block graph and
the shortest path from $i$ to $j$  in $G$ is denoted by $i\leftrightarrow j$.
We use $(i', j') \in i \leftrightarrow j$ to indicate that the edge
$(i', j')$ appears in the path $i\leftrightarrow j$.  We let $\ell(i,j)$ denote
the length of the shortest path from $i$ to $j$.
We now state a result from \cite{Jones(2005)} which 
will be used to prove Proposition \ref{prop:shortestpath}.

\begin{thm}\label{thm:Covariance} (Theorem 1, \cite{Jones(2005)}) 
Consider an $n-$dimensional multivariate normal distribution with a finite and 
non-singular covariance matrix $\Sigma$, with precision matrix $K=\Sigma^{-1}$. 
Let $K$ determine the incidence matrix of a finite, 
undirected graph on vertices $\{1,...,n\}$, with nonzero elements in $K$ corresponding to edges. 
The element of $K$ corresponding to the covariance between variables $x$ and $y$ can be written as a sum of path weights over all paths in the graph between $x$ and $y$:
\[
\sigma_{xy}=\sum_{P\in \mathscr{P}_{xy}}(-1)^{m+1}k_{p_1p_2}k_{p_2p_3}...k_{p_{m-1}p_m}\frac{\det(K_{\setminus P})}{\det(K)},
\]
where $\mathscr{P}_{xy}$ represents the set of paths between $x$ and $y$, so that $p_1=x$ and $p_m=y$ for all $P\in \mathscr{P}_{xy}$ and $K_{\setminus P}$ is the matrix with rows and columns corresponding to variables in the path $P$ omitted, with the determinant of a zero-dimensional matrix taken to be 1.
\end{thm}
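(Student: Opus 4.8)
The plan is to prove the displayed formula as an identity of rational functions in the entries of $K$, deducing it from the adjugate description of the matrix inverse together with the Leibniz (permutation) expansion of a determinant; the graph structure enters only through the fact that $k_{uv}=0$ whenever $(u,v)$ is not an edge, so in the end the sum over $\mathscr{P}_{xy}$ may be restricted to genuine paths. Writing $M_{yx}$ for the minor of $K$ obtained by deleting row $y$ and column $x$, Cramer's rule gives $\sigma_{xy}=(K^{-1})_{xy}=(-1)^{x+y}M_{yx}/\det(K)$, and (using that $K$ is symmetric) the asserted formula is equivalent to the polynomial identity
\[
(-1)^{x+y}M_{yx}=\sum_{P\in\mathscr{P}_{xy}}(-1)^{m+1}\,k_{p_1p_2}k_{p_2p_3}\cdots k_{p_{m-1}p_m}\,\det(K_{\setminus P}).
\]

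To prove this I would expand $M_{yx}$ by the Leibniz formula as a sum over bijections $\tau\colon[n]\setminus\{y\}\to[n]\setminus\{x\}$, the term of $\tau$ being $\operatorname{sgn}(\bar\tau)\prod_{i\neq y}k_{i\,\tau(i)}$, where $\bar\tau\in S_{n-1}$ is obtained by relabelling source and target by their increasing bijections onto $\{1,\dots,n-1\}$. Each $\tau$ extends to a permutation $\hat\tau$ of $[n]$ by setting $\hat\tau(y)=x$. A standard cofactor sign computation gives $(-1)^{x+y}\operatorname{sgn}(\bar\tau)=\operatorname{sgn}(\hat\tau)$, so $(-1)^{x+y}M_{yx}=\sum_{\hat\tau(y)=x}\operatorname{sgn}(\hat\tau)\prod_{i\neq y}k_{i\,\hat\tau(i)}$. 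Then I would sort these permutations by the cycle of $\hat\tau$ through $y$: since $\hat\tau(y)=x$, that cycle has the form $(p_1\,p_2\cdots p_m)$ with $p_1=x$ and $p_m=y$ (the single fixed point $(x)$, with $m=1$, when $x=y$), i.e.\ it is exactly a path $P\in\mathscr{P}_{xy}$, and the factors of $\prod_{i\neq y}k_{i\,\hat\tau(i)}$ indexed by $i\in\{p_1,\dots,p_{m-1}\}$ are precisely $k_{p_1p_2}\cdots k_{p_{m-1}p_m}$ (the factor at $i=y$ is omitted, which is why the weight has $m-1$ rather than $m$ terms). The remaining cycles of $\hat\tau$ form an arbitrary permutation $\rho$ of the complementary index set $[n]\setminus\{p_1,\dots,p_m\}$, and summing their contributions over all such $\rho$ yields $\det(K_{\setminus P})$. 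Finally $\operatorname{sgn}(\hat\tau)=(-1)^{m-1}\operatorname{sgn}(\rho)$ because the cycle on $P$ has length $m$, which produces the global sign $(-1)^{m-1}=(-1)^{m+1}$; summing over all $P\in\mathscr{P}_{xy}$ gives the identity, and dividing by $\det(K)$ completes the argument. (Equivalently, one could invoke the classical expansion of a matrix inverse over the weighted digraph of $K$, but the direct route above is self-contained.)

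The two Leibniz expansions and the identification of cycles with paths are routine once the setup is in place; the point that needs care is the chain of sign identities — fixing one convention for which row and column are deleted, for how the minor's index sets are relabelled, and for how $\hat\tau$ extends $\tau$, and then verifying both $(-1)^{x+y}\operatorname{sgn}(\bar\tau)=\operatorname{sgn}(\hat\tau)$ and $\operatorname{sgn}(\hat\tau)=(-1)^{m-1}\operatorname{sgn}(\rho)$ so that everything collapses to the single factor $(-1)^{m+1}$. It is prudent to check the bookkeeping against a small case such as $n=2$. Since this statement is Theorem~1 of \cite{Jones(2005)}, in the paper one would simply cite it, but the sketch above is the natural self-contained derivation.
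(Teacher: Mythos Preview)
Your derivation is correct: the combination of Cramer's rule, the Leibniz expansion of the off-diagonal minor, and the bijection between permutations $\hat\tau$ with $\hat\tau(y)=x$ and pairs (cycle through $y$, residual permutation) is exactly the standard route to this path-sum formula, and your handling of the signs and of the degenerate case $x=y$ is right. There is nothing to compare against, however, because the paper does not prove this statement at all---it is quoted verbatim as Theorem~1 of \cite{Jones(2005)} and used as a black box in the proof of Proposition~\ref{prop:shortestpath}; you already anticipated this in your final paragraph.
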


\begin{prop}\label{prop:shortestpath}
Let $G=([n],E)$ be a block graph with the corresponding concentration matrix $K$. 
If $f_{xy}$ denote $\det(K)$ times the $(x,y)$ coordinate of $K^{-1}$, then $f_{xy}$ has the monomial 
\[
(-1)^{\ell(i,j) } \prod_{(x',y')\in x\leftrightarrow y}k_{x'y'}\prod_{t\notin x\leftrightarrow y}k_{tt}
\]
as one of its terms. Furthermore, this term has the highest number of diagonal entries $k_{tt}$ among all the monomials of $f_{xy}$.
\end{prop}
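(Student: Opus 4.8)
The plan is to extract $f_{xy}$ directly from Theorem~\ref{thm:Covariance}. Since $\sigma_{xy}$ there is the $(x,y)$ entry of $K^{-1}$ and $f_{xy}=\det(K)\cdot\sigma_{xy}$, clearing the denominator $\det(K)$ in the formula of Theorem~\ref{thm:Covariance} yields the polynomial identity
\[
f_{xy} \;=\; \sum_{P \in \mathscr{P}_{xy}} (-1)^{\ell_P}\Bigl(\,\prod_{e \in P} k_e\Bigr)\det\bigl(K_{\setminus P}\bigr),
\]
where $P$ ranges over the simple paths from $x$ to $y$, $\ell_P$ is the length (number of edges) of $P$, the factor $\prod_{e\in P}k_e$ is the product of the off-diagonal variables along $P$, and $K_{\setminus P}$ is the principal submatrix of $K$ on the vertices not on $P$; here $(-1)^{\ell_P}=(-1)^{m+1}$ with $m=\ell_P+1$ the number of vertices of $P$. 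If $x=y$ the only path is trivial, $f_{xx}=\det(K_{\setminus\{x\}})$, and the statement is immediate, so I assume $x\neq y$ from now on.

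For a monomial of $\rr[K]$, call its total degree in the diagonal indeterminates $k_{11},\dots,k_{nn}$ its \emph{diagonal degree}; since the $k_{ij}$ are distinct indeterminates, monomials of distinct diagonal degree are linearly independent, so cancellation in the displayed sum can only occur among monomials of equal diagonal degree. The key elementary point is about $\det(K_{\setminus P})$: in its permutation expansion the identity permutation contributes $\prod_{t\notin P}k_{tt}$ with coefficient $+1$, while every other permutation has at most $|V\setminus P|-2$ fixed points, hence contributes a monomial properly divisible by some off-diagonal variable, of strictly smaller diagonal degree. Thus $\prod_{t\notin P}k_{tt}$ is the \emph{unique} monomial of $\det(K_{\setminus P})$ of maximal diagonal degree $|V\setminus P|=n-\ell_P-1$. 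Multiplying by $(-1)^{\ell_P}\prod_{e\in P}k_e$, which has diagonal degree $0$ (a simple path carries no loops), the contribution of $P$ to $f_{xy}$ has a unique monomial of maximal diagonal degree, namely $(-1)^{\ell_P}\prod_{e\in P}k_e\prod_{t\notin P}k_{tt}$, of diagonal degree $n-\ell_P-1$.

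Finally I would invoke Proposition~\ref{prop:uniqueness}: since $G$ is a block graph there is a unique shortest path $x\leftrightarrow y$, of length $\ell(x,y)$, and every path $P\neq x\leftrightarrow y$ satisfies $\ell_P\ge \ell(x,y)+1$. Hence in the displayed sum every monomial coming from a path $P\neq x\leftrightarrow y$ has diagonal degree $\le n-\ell(x,y)-2$, whereas the contribution of $x\leftrightarrow y$ contains the monomial $(-1)^{\ell(x,y)}\prod_{(x',y')\in x\leftrightarrow y}k_{x'y'}\prod_{t\notin x\leftrightarrow y}k_{tt}$, of diagonal degree $n-\ell(x,y)-1$. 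As monomials of different diagonal degree cannot cancel, this monomial is not cancelled: it survives in $f_{xy}$ with coefficient $(-1)^{\ell(x,y)}$, and it is the unique monomial of $f_{xy}$ of maximal diagonal degree, so in particular it has the highest number of diagonal entries among all monomials of $f_{xy}$. This is exactly the assertion. The one step that deserves care is precisely this no-cancellation argument — ruling out that longer paths, or the lower-diagonal-degree terms of $\det(K_{\setminus(x\leftrightarrow y)})$, conspire to kill the distinguished monomial — and it is handled cleanly by filtering by diagonal degree together with the \emph{strict} inequality $\ell_P>\ell(x,y)$ for $P\neq x\leftrightarrow y$, i.e. the uniqueness of shortest paths in block graphs.
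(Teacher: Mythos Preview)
Your proof is correct and follows essentially the same route as the paper: both invoke the Jones--West path expansion (Theorem~\ref{thm:Covariance}) for $f_{xy}$, identify the diagonal-richest term contributed by each path, and use uniqueness of the shortest path in a block graph to single out the claimed monomial. The only minor difference is that the paper argues the stronger fact that every $x$--$y$ path contains all vertices of $x\leftrightarrow y$, whereas you simply use that uniqueness forces $\ell_P\ge \ell(x,y)+1$ for $P\neq x\leftrightarrow y$; your explicit diagonal-degree filtration also makes the no-cancellation step cleaner than the paper's more implicit treatment.
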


\begin{proof}
From Theorem \ref{thm:Covariance}, we have 
\[
f_{xy}=\det(K)\cdot\sigma_{xy}= \sum_{P\in \mathscr{P}_{xy}}(-1)^{m+1}k_{p_1p_2}k_{p_2p_3}...k_{p_{m-1}p_m}\text{det}(K_{\setminus P}).
\]

From Proposition \ref{prop:uniqueness} we know that if $G$ is a block graph, 
then for any two vertices $x$ and $y$, there exists a unique shortest path 
between $x$ and $y$. If $z\in x\leftrightarrow y$ with $z\neq x,y$, 
then there exists a 1-clique partition $(A,B,C)$ of $G$ with $C=\{z\}$ and 
$x\in A, y\in B$. By the definition of 1-clique partition we know that any 
path from $x$ to $y$ must pass through $z$. As $z$ is arbitrarily chosen, 
any path in $G$ from $x$ to $y$ must pass through all the vertices in 
$x\leftrightarrow y$. This gives us that the unique shortest path has the 
least number of vertices among all the other paths from $x$ to $y$. 
So, the matrix $K_{\setminus x\leftrightarrow y}$ has the highest dimension 
among all the other matrices $K_{\setminus P}, P\in \mathscr{P}_{xy}$.

Now, for any $P\in \mathscr{P}_{xy}$, det($K_{\setminus P}$) contains the
monomial $\prod_{t\notin P}k_{tt}$ as $G$ is assumed to have self loops. 
This monomial has the highest number of diagonals among all the monomials 
in $\det(K_{\setminus P}$) as the degree of $\det(K_{\setminus P}$) is same as the degree of $\prod_{t\notin P}k_{tt}$. So, the monomial
\[\prod_{(x',y')\in P}k_{x'y'}\prod_{t\notin P}k_{tt}
\]
has the highest number of diagonal terms among all the monomials in $\prod_{(x',y')\in P}k_{x'y'}\text{det}(K_{\setminus P})$. As $K_{\setminus x\leftrightarrow y}$ has the highest dimension, we can conclude that the monomial 
\[\prod_{(x',y')\in x\leftrightarrow y}k_{x'y'}\prod_{t\notin x\leftrightarrow y}k_{tt}
\]
has the maximum number of diagonal terms among all the monomials in $f_{xy}$.
\end{proof}

We call the monomial defined above as the \emph{shortest path monomial} of $f_{ij}$.
As the shortest path monomial in each $f_{ij}$ has the highest power of diagonals $k_{tt}$
among all the other monomials in $f_{ij}$, we can define a weight order on $\mathbb{R}[K]$ 
where the weight of any monomial is the number of diagonal entries 
of the monomial. The initial term of $f_{ij}$ in this order will 
be precisely the shortest path
monomial.

\begin{defn}\label{def:initialmonomialmap}
Let $G$ be a block graph.  Define the $\rr$-algebra homomorphism
\[
\phi:  \rr[\Sigma] \rightarrow \rr[K], \quad  \sigma_{ij}  \mapsto   
\prod_{(i',j')\in i\leftrightarrow j}k_{i'j'}\prod_{t\notin i\leftrightarrow j}k_{tt}.
\]
This monomial homomorphism is called the \emph{initial term map}.
\end{defn}

The  map $\phi$ is the initial term map of $\rho^*$, but with the sign  $(-1)^{\ell(i,j)}$ omitted. 
We will use this to show that the set $F$ forms a 
SAGBI basis of $\mathbb{R}[F]$ by using this term order, as part of our proof of Theorem 
\ref{thm: theConjecture}.  This appears in Section \ref{sec:SAGBI}.  To do this we must spend some time proving properties of $\phi$
and $\ker \phi$.  

Note that the kernel of $\phi$ is the same with or without the signs $(-1)^{\ell(i,j)}$.  This is because
the monomials that appear are graded by the number of diagonal terms that appear, which is
also counted by the  $(-1)^{\ell(i,j)}$.  Any binomial relation $\sigma^u - \sigma^v \in \ker \phi$
much also lead to the same power of negative one on both sides of the equation.

From the standpoint of proving results about this monomial map based on shortest
paths in a block graph, it turns out to be easier to work with a related map
that we call the shortest path map.

\begin{defn}\label{defn:psi map}
Let $G=([n],E)$ be a block graph. The \emph{shortest path map} $\psi$ is defined as
\begin{eqnarray*}
\psi&:&\mathbb{R}[\Sigma]\rightarrow \mathbb{R}[a_1,...,a_n,k_{12},...,k_{n-1,n}]  =  \rr[A, K] \\
\psi(\sigma_{ij})&=& \left\{
\begin{array}{ll}
      a_ia_j\prod_{(i',j') \in i\leftrightarrow j} k_{i'j'} & i\neq j \\
      a_{i}^2 & i= j. \\
\end{array} 
\right.
\end{eqnarray*}
\end{defn}

\begin{ex}\label{ex:Mphi}
Let $G$ be the graph in Example \ref{ex:illustrate}. 
Let $\psi$ be the shortest path map and $\phi$ the initial monomial map
as given in Definitions \ref{def:initialmonomialmap} and \ref{defn:psi map}.  
So for example,
\[
\phi(\sigma_{11}) = k_{22} k_{33} k_{44}, \phi(\sigma_{12}) = k_{12} k_{33}k_{44}, \ldots
\]
\[
\psi(\sigma_{11}) =  a_1^2,  \psi(\sigma_{12}) = a_1 a_2 k_{12}, \ldots.
\]

As is typical for monomial parametrizations, we can represent them by matrices whose 
columns are the exponent vectors of the monomials appearing in the parametrization.
In this case, we get the following matrices corresponding to $\phi$ and $\psi$ respectively.
\[  M_\phi = \begin{bmatrix}
    0 & 0 & 0 & 0 & 1 & 1 & 1 & 1 & 1 & 1 \\
    1 & 0 & 1 & 1 & 0 & 0 & 0 & 1 & 1 & 1 \\
    1 & 1 & 0 & 0 & 1 & 0 & 0 & 0 & 0 & 1 \\
    1 & 1 & 1 & 0 & 1 & 1 & 0 & 1 & 0 & 0 \\
    0 & 1 & 0 & 0 & 0 & 0 & 0 & 0 & 0 & 0 \\
    0 & 0 & 1 & 1 & 0 & 0 & 0 & 0 & 0 & 0 \\
    0 & 0 & 0 & 0 & 0 & 1 & 1 & 0 & 0 & 0 \\
    0 & 0 & 0 & 1 & 0 & 0 & 1 & 0 & 1 & 0  
\end{bmatrix}  \quad \quad \quad \quad M_\psi = \begin{bmatrix}
    2 & 1 & 1 & 1 & 0 & 0 & 0 & 0 & 0 & 0 \\
    0 & 1 & 0 & 0 & 2 & 1 & 1 & 0 & 0 & 0 \\
    0 & 0 & 1 & 0 & 0 & 1 & 0 & 2 & 1 & 0 \\
    0 & 0 & 0 & 1 & 0 & 0 & 1 & 0 & 1 & 2 \\
    0 & 1 & 0 & 0 & 0 & 0 & 0 & 0 & 0 & 0 \\
    0 & 0 & 1 & 1 & 0 & 0 & 0 & 0 & 0 & 0 \\
    0 & 0 & 0 & 0 & 0 & 1 & 1 & 0 & 0 & 0 \\
    0 & 0 & 0 & 1 & 0 & 0 & 1 & 0 & 1 & 0  
\end{bmatrix}.
\]
The rows of $M_\phi$ are ordered as 
$\{k_{11},k_{22},k_{33},k_{44},k_{12},k_{13},k_{23},k_{34}\}$ 
and the rows of $M_\psi$ are ordered as $\{a_1,a_2,a_3,a_4,k_{12},k_{13},k_{23},k_{34}\}$. 
\end{ex}

In fact, these two monomial maps have the same kernel for block graphs. 

\begin{prop}\label{prop:same kernel}
Let $G$ be a block graph and let  $\phi$ and $\psi$ be the initial
term map and the shortest path map, respectively. Then $\ker(\psi) = \ker(\phi)$.
\end{prop}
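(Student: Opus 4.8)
The plan is to exhibit an explicit invertible monomial substitution relating the two parametrizations, so that $\psi$ and $\phi$ differ only by a change of coordinates in the target ring and hence have the same kernel. The key observation is that the $k_{tt}$ that appear in $\phi(\sigma_{ij})$ are exactly the ones for $t \notin i\leftrightarrow y$, while in $\psi(\sigma_{ij})$ the variables $a_i, a_j$ track the \emph{endpoints} of the path and the off-diagonal $k_{i'j'}$ are common to both maps. So I would look for a monomial change of variables sending $a_i^2 \mapsto \prod_{t \neq i} k_{tt}$ in such a way that the path variables behave correctly.

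The precise step is: consider the $\rr$-algebra homomorphism $\theta : \rr[A,K] \to \rr[K]$ (or into a localization) defined on generators by
\[
\theta(a_i) = \prod_{t \neq i} k_{tt}^{1/2}, \qquad \theta(k_{i'j'}) = \frac{k_{i'j'}}{k_{i'i'}^{?} k_{j'j'}^{?}} \cdot (\text{correction}),
\]
where the exponents are chosen so that $\theta \circ \psi = \phi$. To see what the corrections must be, compare the two images of $\sigma_{ij}$: $\psi(\sigma_{ij}) = a_i a_j \prod_{(i',j')\in i\leftrightarrow j} k_{i'j'}$ whereas $\phi(\sigma_{ij}) = \prod_{(i',j') \in i\leftrightarrow j} k_{i'j'} \cdot \prod_{t \notin i\leftrightarrow j} k_{tt}$. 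Writing $V(i,j)$ for the vertex set of the path $i\leftrightarrow y$, we have $\prod_{t\notin V(i,j)} k_{tt} = \big(\prod_{t\neq i} k_{tt}\big)^{1/2}\big(\prod_{t \neq j} k_{tt}\big)^{1/2} \cdot \prod_{t\in V(i,j)\setminus\{i,j\}} k_{tt}^{-1} \cdot k_{ii}^{-1/2} k_{jj}^{-1/2}$ when $i \neq j$ — the internal vertices of the path each need to be cancelled once. This tells me the right definition is $\theta(a_i) = k_{ii}^{-1/2}\prod_t k_{tt}^{1/2}$ and $\theta(k_{i'j'}) = k_{i'j'}/(k_{i'i'} k_{j'j'})^{1/2}$ times nothing further — one then checks by the telescoping/path-decomposition property (Proposition~\ref{prop:uniqueness}) that the internal-vertex diagonal factors collect correctly, using that each internal vertex $t$ of the path is incident to exactly two path edges. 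The half-integer exponents are harmless: work in the Laurent polynomial ring in the square roots $k_{tt}^{1/2}$, or equivalently note that every $\sigma_{ij}$ has even total degree in the $a$'s so only integer powers of the $k_{tt}$ ever appear in the image of $\psi$ after substitution.

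Having built $\theta$ with $\theta\circ\psi = \phi$, the inclusion $\ker(\psi)\subseteq\ker(\phi)$ is immediate. For the reverse, I would construct the analogous substitution $\theta'$ in the other direction (sending $k_{tt} \mapsto $ a monomial in the $a$'s and path variables, e.g. $k_{tt}\mapsto \prod_{i\neq t}a_i^{2/(n-1)}$ — again rational exponents that clear because the relevant images are suitably graded), or, more cleanly, argue directly with the exponent matrices $M_\phi$ and $M_\psi$ from Example~\ref{ex:Mphi}: show that the row spaces (over $\qq$) of $M_\phi$ and $M_\psi$ coincide, since $\ker$ of a monomial map depends only on the $\qq$-row space (equivalently the lattice of linear relations among columns) of its exponent matrix. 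Concretely, the $a_i$-rows of $M_\psi$ and the $k_{ii}$-rows of $M_\phi$ are related by the invertible integer transformation "$a_i \leftrightarrow \tfrac12(\sum_t k_{tt}) - k_{ii}$" on row space, and the off-diagonal rows are literally identical in both matrices; hence the column-relation lattices agree and $\ker(\psi) = \ker(\phi)$.

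The main obstacle I anticipate is bookkeeping the diagonal exponents along a shortest path correctly — specifically verifying that when $\theta$ is applied to $\psi(\sigma_{ij})$, each internal vertex $t$ of $i\leftrightarrow y$ contributes a net $k_{tt}^0$ (the two incident path edges each carrying $k_{tt}^{-1/2}$ from $\theta(k_{\cdot t})$ and $\theta(k_{t\cdot})$, cancelling the $k_{tt}^{+1}$ that would otherwise appear). This is exactly where the block-graph structure enters: by Proposition~\ref{prop:uniqueness} the shortest path is unique and its internal vertices have path-degree exactly two, which is what makes the cancellation clean; in a general graph the argument would break. Once that local cancellation is confirmed, the rest is formal.
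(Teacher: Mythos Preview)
Your overall strategy---compare the exponent matrices $M_\phi$ and $M_\psi$ and show their $\qq$-row spaces coincide, or equivalently build a monomial substitution $\theta$ with $\theta\circ\psi=\phi$---is exactly the paper's approach, and your substitution
\[
\theta(a_i)=k_{ii}^{-1/2}\prod_t k_{tt}^{1/2},\qquad \theta(k_{i'j'})=k_{i'j'}\,(k_{i'i'}k_{j'j'})^{-1/2}
\]
is correct and gives $\ker(\psi)\subseteq\ker(\phi)$. In fact, if you translate this $\theta$ into a row relation you get precisely the paper's identity
\[
2k_{ii}\;=\;\sum_{j\neq i}a_j\;-\;\sum_{s:\,(i,s)\in E}k_{is},
\]
where the off-diagonal $k_{is}$ terms come from the denominators you placed in $\theta(k_{i'j'})$.

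However, the two concrete formulas you wrote for the reverse inclusion are wrong. The row relation ``$a_i\leftrightarrow \tfrac12(\sum_t k_{tt})-k_{ii}$'' fails already on the column $\sigma_{ii}$: the right side equals $(n-1)/2$ while the $a_i$-entry is $2$. The missing piece is exactly the off-diagonal correction $\sum_{s:(i,s)\in E}k_{is}$ that your own $\theta$ introduces; without it the diagonal rows of $M_\phi$ alone do not span the $a_i$-rows. Likewise the proposed $\theta'$ with $k_{tt}\mapsto\prod_{i\neq t}a_i^{2/(n-1)}$ does not satisfy $\theta'\circ\phi=\psi$ (apply it to $\phi(\sigma_{ii})=\prod_{t\neq i}k_{tt}$ and you get nonzero exponents on $a_s$ for $s\neq i$). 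The clean way to get the reverse containment is the paper's: rewrite the displayed identity as $\sum_{j\neq i}a_j = 2k_{ii}+\sum_{s}k_{is}$, note the coefficient matrix on the $a_j$'s is the all-ones-minus-identity matrix, and invert it over $\qq$ for $n\ge 2$. So your plan is right, but the bookkeeping for the second inclusion needs to be redone with the off-diagonal rows included.
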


\begin{proof}
Both $\ker(\phi)$ and $\ker(\psi)$ are toric ideals.  To show that they have the same
kernel, it suffices to show that the associated matrices of exponent vectors have the same kernel,
or equivalently, that they have the same row span.  
Let $M_\phi$ and $M_\psi$ denote those matrices.
As $\psi(\sigma_{ij})=a_ia_j\prod_{(i',j')\in i\leftrightarrow j}k_{i'j'}$ and
$ \phi(\sigma_{ij})=\prod_{(i',j')\in i\leftrightarrow j}k_{i'j'}\prod_{s\notin i\leftrightarrow j}k_{ss}$, 
the rows corresponding to $k_{ij}$ with $i\neq j$ remain the same in both the matrices. 
So, we only need to write the $k_{ii}$ rows of $M_\phi$ as a linear combination of the rows of $M_\psi$
and vice versa.

The row vector corresponding to $k_{ii}$ in $M_\phi$ is 1 at the 
$\sigma_{pq}$ coordinates where $i\notin p\leftrightarrow q$ and is 0 elsewhere. 
Similarly, the row vector corresponding to $a_i$ in $M_\psi$ is 2 at the $\sigma_{ii}$ coordinate, 
1 at the $\sigma_{pq}$ coordinates where either of the end  
points is $i$ (either $p=i$ or $q=i$) and 0 elsewhere.

We observe that the $k_{ii}$ rows of $M_\phi$ can be written as a 
linear combination of the rows of $M_\psi$ using the following relation:  
\begin{eqnarray}\label{eqnarray:kii}
2k_{ii}=\sum_{j\neq i}a_j-\sum_{s:i\leftrightarrow s \text{ is an edge}}k_{is}.
\end{eqnarray}
Here we are using $k_{ii}$ to denote the row vector of $M_\phi$ 
corresponding to the indeterminate $k_{ii}$, and similarly for $a_j$ and $k_{is}$. 
We have
\begin{eqnarray*}
\sum_{j\neq i}a_j &=& \text{ paths ending at } i + 2(\text{ paths not ending at } i) - i\leftrightarrow i ,\\
\sum_{s: i\leftrightarrow s \text{ is an edge }}k_{is}&=& \text{ paths ending at } i +2(\text{ paths containing } i \text{ but not ending at } i)-i\leftrightarrow i .
\end{eqnarray*}
So,
\[
\sum_{j\neq i}a_j-\sum_{s: i\leftrightarrow s \text{ is an edge }}k_{is}= 2(\text{ paths not containing } i)= 2k_{ii}.
\]
As this relation is true for any $i$, the row space of $M_\phi$ is contained in the row space of $M_\psi$.
So, $\ker(\psi)\subseteq \ker(\phi)$.

To get the reverse containment, we need to write the $a_i$ rows of $M_\psi$ 
as a linear combination of the rows of $M_\phi$.
From (\ref{eqnarray:kii}), we get 
\[
\sum_{j\neq i}a_j=2k_{ii}+\sum_{s:i\leftrightarrow s \text{ is an edge}}k_{is}.
\]
Writing these $n$ equations in the matrix form, 
we get an $n\times n$ matrix in the left hand side 
which has $0$ in its diagonal entries and $1$ elsewhere.
As this matrix is invertible for any $n>1$, we can conclude that the row space of 
$M_\psi$ is contained in the row space of $A$. Hence, $\ker(\psi)=\ker(\phi)$.
\end{proof}

Our goal in the next two sections will be to characterize the vanishing ideal of
the shortest path map for block graphs.

\begin{defn}
Let $G$ be a block graph.
Let $SP_G =  \ker(\psi) = \ker(\phi)$ be the kernel of the
shortest path map.  This ideal is called the \emph{shortest path ideal}.  
\end{defn}

As the shortest path map is a monomial map, we know that the shortest path ideal is a toric ideal. We will eventually show that $SP_G = CI_G = P_G$, however we find it
useful to have different notation for these ideals while we have not
yet proven the equality.

%%%%%%%%%%%%%%%%%%%%%%%%%%%%%%%%%%%%%%%%%%%%%%%%
%%%%%%%%%%%%%%%%%%%%%%%%%%%%%%%%%%%%%%%%%%%%%%%%
%%%%%%%%%%%%%%%%%%%%%%%%%%%%%%%%%%%%%%%%%%

\section{Shortest path map for block graphs with 1 central vertex} \label{sec:1central}

In this section we show that $SP_G =  CI_G$ in the case that $G$
is a block graph with only one central vertex.  This will be an important special
case and tool for proving that $SP_G =  CI_G$ for all block graphs,
which we do in Section \ref{sec:allgraphs}.  Our proof for 
graphs with only one central vertex depends on reducing the study of the
ideal $SP_G$ in this case to related notions of edge rings in \cite{DeLoera(1995)} and \cite{Herzog(2018)}.

\begin{defn}\label{defn:central vertex}
If $G$ is a block graph, a vertex $c$ in $G$ is called a 
\textit{central vertex} if there exists a $1$-clique partition $(A,B,C)$ of $G$ 
such that $C=\{c\}$.
\end{defn}

\begin{ex}\label{ex:1centralclique}
Let $G$ be the block graph with 5 vertices as in Figure \ref{fig: 1central}. 
There are three possible 1-clique partitions of $G$, 
$(\{1,2\},\{4,5\},\{3\}),(\{1,2,4\},\{5\},\{3\})\text{ and }(\{1,2,5\},\{4\},\{3\})$. 
We see that $3$ is the only central vertex of $G$ as $C=\{3\}$ for all the three partitions. 
Now computing $SP_G$ for this graph, we get 
\begin{eqnarray*}
\ker(\psi)=\langle \sigma_{34}\sigma_{35}-\sigma_{33}\sigma_{45}, \sigma_{24}\sigma_{35}-\sigma_{23}\sigma_{45},
\sigma_{14}\sigma_{35}-\sigma_{13}\sigma_{45}, \sigma_{25}\sigma_{34}-\sigma_{23}\sigma_{45},
\\
\sigma_{15}\sigma_{34}-\sigma_{13}\sigma_{45},\sigma_{25}\sigma_{33}-\sigma_{23}\sigma_{35},
\sigma_{24}\sigma_{33}-\sigma_{23}\sigma_{34},
\sigma_{15}\sigma_{33}-\sigma_{13}\sigma_{35},\\
\sigma_{14}\sigma_{33}-\sigma_{13}\sigma_{34}, \sigma_{15}\sigma_{24}-\sigma_{14}\sigma_{25},\sigma_{15}\sigma_{23}-\sigma_{13}\sigma_{25}, \sigma_{14}\sigma_{23}-\sigma_{13}\sigma_{24}\rangle.
\end{eqnarray*}

\begin{figure}
\begin{center}
\begin{tikzpicture}
  [scale=.6,auto=left,every node/.style={circle,fill=blue!20}]
  \node (n2) at (0,0) {2};
  \node (n3) at (3,3)  {3};
  \node (n5) at (5.5,0)  {5};
  \node (n4) at (5.5,6)  {4};
  \node (n1) at (0,6)  {1};

  \foreach \from/\to in {n1/n2,n2/n3,n3/n4,n3/n5,n1/n3}
    \draw (\from) -- (\to);

\end{tikzpicture}
\end{center}
\caption{\label{fig: 1central} }
\end{figure}
\end{ex}  

We observe that in Example \ref{ex:1centralclique}, none of the generators of $SP_G$ 
contain the terms $\sigma_{12},\sigma_{11},\sigma_{22}, \sigma_{44}$ and $\sigma_{55}$. 
These terms correspond to the edges in $G$ which cannot be separated by 
any 1-clique partition of $G$. This property is true for all block graphs with one central vertex as we prove it in the next Lemma.

\begin{lem}\label{lem:inseparable edges}
Let $G$ be a block graph with one central vertex $c$ 
and let $D$ be the set of variables $\sigma_{pq}$, where 
the shortest path $p \leftrightarrow q$ does not intersect $c$.  
Then none of the variables appearing in $D$ appear in any of the minimal 
generators of the kernel of $\psi$.
\end{lem}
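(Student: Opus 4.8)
The plan is to show that any variable $\sigma_{pq}\in D$ does not occur in a minimal binomial generator of $\ker(\psi)$ by analyzing the monomial $\psi(\sigma_{pq})$ directly. Since $G$ has a single central vertex $c$, the condition $\sigma_{pq}\in D$ means the shortest path $p\leftrightarrow q$ avoids $c$; by the block graph structure (Proposition \ref{prop:uniqueness}) and the fact that $c$ is the \emph{only} central vertex, the entire path $p\leftrightarrow q$ together with its endpoints must lie in a single block (a clique) $Q$ of $G$ on one ``side'' of $c$ — more precisely, in a subgraph that is attached to the rest of $G$ only through $c$. First I would make this precise: I would argue that if $p\leftrightarrow q$ misses $c$, then for every other vertex $r$, the shortest path $p\leftrightarrow r$ either stays within the same block-neighborhood or must pass through $c$, and in the latter case it passes through some fixed ``gateway'' vertex (the unique neighbor of $c$ on the $p$-side). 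The key point is that the indeterminates $k_{p'q'}$ for edges $(p',q')\in p\leftrightarrow q$, and the ``new'' diagonal/auxiliary variable $a_p$ or $a_q$ contributions, behave in a way that is not shared by any other $\psi(\sigma_{rs})$.

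The main technical step is a \emph{leading-variable} or \emph{isolation} argument. I would look at the matrix $M_\psi$ of exponent vectors and identify, for each $\sigma_{pq}\in D$, a row of $M_\psi$ (an indeterminate among the $a_i$'s and $k_{ij}$'s) that is supported \emph{only} on the column $\sigma_{pq}$ among all columns, or more generally such that $\sigma_{pq}$ is forced to appear on both sides with the same multiplicity in any relation. Concretely: because $p\leftrightarrow q$ lies strictly on one side of $c$ and $c$ is the unique central vertex, the edge $(p,q)$ or the specific combination of edges along $p\leftrightarrow q$ cannot be ``rerouted'' — there is no 1-clique partition separating $p$ from $q$, so no other product $\psi(\sigma_{rs})\cdot(\text{stuff})$ can reproduce the $k$-part of $\psi(\sigma_{pq})$ without also using $\sigma_{pq}$ itself. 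I would formalize this by showing that for any $u,v\in\nn^{\binom{n+1}{2}}$ with $\psi(\sigma)^u=\psi(\sigma)^v$, if $u$ has positive $\sigma_{pq}$-coordinate then so does $v$; this lets one cancel $\sigma_{pq}$ from both sides, contradicting minimality. The cleanest route is probably: pick the variable $a_p$ (or whichever auxiliary variable has the most constrained support), track where its exponent can come from, and descend along the unique shortest path.

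I expect the \textbf{main obstacle} to be handling the case where several variables in $D$ interact — for instance when $p\leftrightarrow q$ is a single edge inside a large clique $Q$ not containing $c$, there are many such edges $\sigma_{pq}, \sigma_{pq'}, \ldots$ all in $D$, and one must rule out binomial relations \emph{among themselves} (e.g. $\sigma_{pq}\sigma_{p'q'}-\sigma_{pq'}\sigma_{p'q}$ type relations within the clique). Here the auxiliary variables $a_i$ do the work: in $\psi$, a monomial like $\sigma_{pq}\sigma_{p'q'}$ carries $a_pa_qa_{p'}a_{q'}$ while $\sigma_{pq'}\sigma_{p'q}$ carries the same $a$-monomial but the $k$-parts are $k_{pq}k_{p'q'}$ versus $k_{pq'}k_{p'q}$, which are distinct monomials in distinct indeterminates (all four edges exist and are distinct in the clique), so no such relation exists. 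I would therefore emphasize that the introduction of distinct edge-variables $k_{ij}$ for each edge, rather than collapsing them, is exactly what kills these relations. Once the within-$D$ case and the mixed case (one variable in $D$, one outside) are both dispatched by this support-tracking argument, the lemma follows, and it will be used in the next results to reduce $SP_G$ to an edge-ring computation on the graph with the $D$-edges contracted.
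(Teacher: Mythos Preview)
Your proposal is correct and follows essentially the same route as the paper: the key observation is that for $\sigma_{pq}\in D$ with $p\neq q$, the edge variable $k_{pq}$ appears in $\psi(\sigma_{ij})$ only when $\{i,j\}=\{p,q\}$, which immediately forces $\sigma_{pq}$ to occur with equal multiplicity on both sides of any binomial relation --- this already dispatches your ``main obstacle'' as a trivial special case, so no separate within-$D$ analysis is needed. The loop variables $\sigma_{pp}$ require the separate argument you only gesture at (the paper tracks the exponent of $a_p$ against that of $k_{cp}$ and obtains an infinite regress), but otherwise the two arguments coincide and your write-up can be shortened considerably.
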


\begin{proof}
Since $\psi$ is a monomial parametrization, the
 kernel of $\psi$ is a homogeneous binomial ideal. Let 
\[
f= \sigma^{u}-\sigma^{v}
\]
be an arbitrary binomial in any generating set for the kernel of $SP_G$.  
 In particular, this implies
that $\sigma^{u}$ and $\sigma^{v}$ have no common factors.
Suppose by way of contradiction that
$\sigma_{pq}$ is  some variable in $D$ that divides one of the terms of $f$, 
say $\sigma^{u}$. Then $\psi(\sigma^{u})$ would have $k_{pq}$ as a factor. 
But $k_{pq}$ appears only in the image of $\sigma_{pq}$ as no other shortest path between 
any two vertices in $G$ contains the edge $(p,q)$. This would imply that 
$\sigma_{pq}$ is also a factor of $\sigma^{v}$ contradicting the fact that 
$\sigma^{u}$ and $\sigma^{v}$ have no common factors. 

Similarly, if $\sigma_{pp}$ is a factor of $\sigma^{u}$ where $p$ is not the central vertex, 
then $\psi(\sigma^{u})$ would have $a_p^2$ as a factor. 
In order to have $a_p^2$ as a factor of $\psi(\sigma^{v})$, it would require two variables in
$\sigma^v$
to have $p$ as one of their end points. 
As $p$ is not a central vertex, we will have 
$k_{cp}^2$ as a factor of $\psi(\sigma^{v})$.  But then this means that there must be two
variables in $\sigma^{u}$ that touch vertex $p$.  Which in turn forces another factor
of $a_p^2$ to divide $\psi(\sigma^{u})$.  Which in turn forces another two variables in $\sigma^v$ to
touch vertex $p$, and so on.  This process never terminates, showing that it is impossible that 
$\sigma_{pp}$ is a factor of $\sigma^{u}$.

Hence we can conclude that none of the variables in $D$ appear in any of the generators of $SP_G$. 
\end{proof}

Note that the proof of Lemma \ref{lem:inseparable edges} also applies to any block graph 
with multiple central vertices.  Hence, we can eliminate some of the variables 
in the computation of the shortest path ideal. 

We let $\rr[\Sigma \setminus D]$ denote the polynomial ring with the 
variables $D$ eliminated.  Here we are always taking $D$ to the be set
of variables corresponding to paths that do not touch the central vertex $x$.
Lemma \ref{lem:inseparable edges}
shows that it suffices to consider the problem of finding a generating set
of $SP_G$ inside of $\rr[\Sigma \setminus D]$.

The next step in our analysis of $SP_G$ for block graphs with one
central vertex will be to relate this ideal to a simplified parametrization
which we can then relate to edge ideals.

Let $G$ be a block graph with one central vertex.
Consider the map
\[
\hat{\psi} :  \rr[\Sigma \setminus D]  \rightarrow  \rr[a], \quad  \sigma_{ij}  \mapsto a_i a_j.
\]

\begin{prop}
Let $G$ be a block graph with one central vertex.  Then $\ker \hat{\psi} = \ker \psi$.  
\end{prop}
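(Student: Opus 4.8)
The plan is to exhibit $\psi$ restricted to $\rr[\Sigma\setminus D]$ as $\theta\circ\hat{\psi}$ for a suitable \emph{injective} $\rr$-algebra homomorphism $\theta$, which immediately forces $\ker\hat{\psi}=\ker(\psi|_{\rr[\Sigma\setminus D]})$, and then to pass back to $\ker\psi$ using Lemma \ref{lem:inseparable edges}.

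First I would record which variables survive in $\rr[\Sigma\setminus D]$. Since $c$ is the unique central vertex, $D$ contains every diagonal variable $\sigma_{pp}$ with $p\neq c$ (the length-zero path at $p$ does not meet $c$) as well as every $\sigma_{pq}$ whose shortest path avoids $c$; so the surviving variables are $\sigma_{cc}$ together with the off-diagonal $\sigma_{ij}$ whose shortest path $i\leftrightarrow j$ passes through $c$. Next, for each vertex $v$ I would set $w_v:=\prod_{(v',v'')\in c\leftrightarrow v}k_{v'v''}$, with the convention $w_c=1$, and invoke the second part of Proposition \ref{prop:uniqueness}: for a surviving off-diagonal variable $\sigma_{ij}$, the path $i\leftrightarrow j$ is the concatenation of the edge-disjoint shortest paths $i\leftrightarrow c$ and $c\leftrightarrow j$, hence $\prod_{(i',j')\in i\leftrightarrow j}k_{i'j'}=w_iw_j$ and
\[
\psi(\sigma_{ij})=a_ia_j\,w_iw_j=(a_iw_i)(a_jw_j),\qquad \psi(\sigma_{cc})=a_c^2=(a_cw_c)^2 .
\]

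Then I would define $\theta:\rr[a]\to\rr[A,K]$ by $a_i\mapsto a_iw_i$. Composing $\theta$ with the retraction $\rr[A,K]\to\rr[a]$ sending each $k_{i'j'}\mapsto 1$ recovers the identity on $\rr[a]$, so $\theta$ is injective. The two displayed identities say precisely that $\psi|_{\rr[\Sigma\setminus D]}=\theta\circ\hat{\psi}$; since $\theta$ is injective, $\ker\hat{\psi}=\ker(\psi|_{\rr[\Sigma\setminus D]})=\ker\psi\cap\rr[\Sigma\setminus D]$. Finally, Lemma \ref{lem:inseparable edges} guarantees a generating set of $\ker\psi$ lying in $\rr[\Sigma\setminus D]$, so $\ker\psi$ and $\ker\hat{\psi}$ agree under this identification, which is the asserted equality.

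I do not expect a serious obstacle: the argument is essentially bookkeeping. The one delicate point — and the only place the block-graph hypothesis is really used — is the factorization $\prod_{e\in i\leftrightarrow j}k_e=w_iw_j$, which rests on uniqueness of shortest paths and their through-$c$ decomposition in Proposition \ref{prop:uniqueness}, together with the already-established fact that the unbalanced diagonal variables $\sigma_{pp}$ with $p\neq c$ have been removed, so that $\psi(\sigma_{cc})=(a_cw_c)^2$ is consistent with the normalization $w_c=1$.
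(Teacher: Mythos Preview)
Your proof is correct and follows essentially the same approach as the paper's. The paper's proof is a terse one-paragraph observation that for surviving variables $\sigma_{pq}$, the factor $a_p$ appears in $\psi(\sigma_{pq})$ if and only if $k_{pc}$ does, so the $k_{pc}$ can be dropped from the parametrization without changing the kernel; your construction of the injective map $\theta(a_i)=a_iw_i$ (with $w_i=k_{ic}$ in the one-central-vertex case) and the retraction $k_e\mapsto 1$ is exactly the rigorous formalization of that observation.
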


\begin{proof}
Note that because we only consider $\sigma_{pq} \in \rr[\Sigma \setminus D] $ then
any time $\psi(\sigma_{pq})$ contains $k_{pc}$ it will automatically contain
$a_p$ as well, and vice versa.  Hence, the $a_p k_{pc}$ always occurs as a factor together
in $\psi(\sigma_{pq})$.  So we can eliminate the $k_{pc}$ from the parametrization without
affecting the kernel of the homomorphism.
\end{proof}

In order to analyze $SP_G = \ker \hat{\psi} = \ker \psi$, we find it useful to first extend
the map to all of $\rr[\Sigma]$, where the kernel is well understood. 
 In particular,
we associate an edge in the graph $K_n^\circ$ to each variable in $\rr[\Sigma]$,
where $K_n^\circ$ denotes the complete graph $K_n$ with a loop added to each vertex.
We embed $K_n^\circ$ in the plane so that the vertices are arranged to lie on a circle.
We consider the map 
\[
\hat{\psi}:  \rr[\Sigma]  \rightarrow \rr[a], \quad  \sigma_{ij} =  a_i a_j
\]
and its kernel $SP_{K_n^\circ} =  \ker \hat{\psi}$.    
We describe a Gr\"obner basis for this ideal, based on the combinatorics of 
the embedding of the graph $K_n^\circ$.  We consider a pair of edges $(i,j), (k,l)$
to be \emph{intersecting} if the two edges share a vertex or the edges
intersect each other in the circular embedding of $K_n^\circ$.

The \emph{circular distance} between two vertices of $K_n$ 
is defined as the length of the shorter path among the two paths
present along the edges of the $n$-gon. We define the \emph{weight}
of the variable $\sigma_{ij}$ as the number of edges of $K_n^\circ$ that do not 
intersect the edge $(i,j)$. Let $\prec$ denote any term order that 
refines the partial order on monomials specified by these weights. 
Now, for any pair of non-intersecting edges $(i,j),(k,l)$ of $K_n^\circ$, 
one of the pairs $(i,k),(j,l)$ or $(i,l)(j,k)$ is intersecting. %meet in a point. 
If $(i,k),(j,l)$ is the intersecting pair, we associate the binomial 
$\sigma_{ij}\sigma_{kl}-\sigma_{ik}\sigma_{jl}$ with the non intersecting pair of %disjoint 
edges $(i,j),(k,l)$. We denote by $S'$ the set of all binomials obtained in this way.

\begin{lem}\label{lem:disjoint}
For any binomial $\sigma_{ij}\sigma_{kl}-\sigma_{ik}\sigma_{jl}$, 
where $(i,j),(k,l)$ are non-intersecting edges and $(i,k),(j,l)$ intersect, %meet at a point, 
the initial term with respect to $\prec$ corresponds to the non intersecting edges %disjoint edges 
in $K_n^\circ$.
\end{lem}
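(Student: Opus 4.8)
The plan is to compare the two weights assigned to the monomials $\sigma_{ij}\sigma_{kl}$ and $\sigma_{ik}\sigma_{jl}$, and show that the pair of non-intersecting edges $(i,j),(k,l)$ carries strictly larger total weight than the intersecting pair $(i,k),(j,l)$; since $\prec$ refines the partial order given by these weights, this immediately identifies the initial term. Recall that the weight of $\sigma_{pq}$ is the number of edges of $K_n^\circ$ (including loops) that do \emph{not} intersect the edge $(p,q)$ in the circular embedding, where ``intersect'' means sharing a vertex or crossing in the embedding. So what must be shown is
\[
w(\sigma_{ij}) + w(\sigma_{kl}) \;>\; w(\sigma_{ik}) + w(\sigma_{jl}),
\]
equivalently that the number of edges meeting $\{(i,j),(k,l)\}$ is \emph{smaller} than the number meeting $\{(i,k),(j,l)\}$ (counting an edge meeting both members of a pair twice, consistently on both sides).

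First I would fix the cyclic order of the four vertices $i,j,k,l$ around the $n$-gon. Since $(i,j)$ and $(k,l)$ are non-intersecting (no shared vertex, no crossing), the four points must appear in the cyclic order $i,j,k,l$ (up to relabeling within each edge and up to rotation): the chord $ij$ must separate $\{i,j\}$ from $\{k,l\}$, so $k,l$ lie together on one arc cut off by $ij$. Then the ``diagonal'' chords $(i,k)$ and $(j,l)$ do cross each other inside the polygon, matching the hypothesis that $(i,k),(j,l)$ is the intersecting pair. With this normalization in place, the combinatorics becomes a finite, explicit comparison.

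Next I would carry out the counting by comparing, edge-by-edge over an arbitrary chord $e$ of $K_n^\circ$, whether $e$ intersects $(i,j)$, $(k,l)$, $(i,k)$, $(j,l)$. The key geometric fact is that any chord $e$ that crosses $(i,k)$ or crosses $(j,l)$ must also cross $(i,j)$ or $(k,l)$ or pass through one of the four vertices — intuitively because $(i,k)$ and $(j,l)$ live ``inside the quadrilateral'' $ijkl$ whose boundary chords on the relevant side are $(i,j)$ and $(k,l)$. More carefully: split the vertices outside $\{i,j,k,l\}$ into the four open arcs $(i,j), (j,k), (k,l), (l,i)$ and check, for each of the finitely many types of chord determined by which arcs its endpoints lie in (plus chords incident to one of the four special vertices, plus the loops at the four special vertices), that it contributes at least as much to the intersection count of $\{(i,k),(j,l)\}$ as to that of $\{(i,j),(k,l)\}$; then exhibit at least one chord — e.g. the loop at $i$, or the edge $(i,k)$ itself — that contributes strictly more to the diagonal pair, giving the strict inequality. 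I expect the bookkeeping of these arc-types to be the main obstacle: it is entirely elementary but one must be careful to (i) treat loops correctly (a loop at a vertex $v$ ``intersects'' $(p,q)$ iff $v\in\{p,q\}$), (ii) not double-count a chord that meets both members of a pair, and (iii) handle degenerate positions (chords through $i,j,k,l$) without leaving a gap. Once the strict inequality is established, the conclusion is immediate: the monomial $\sigma_{ij}\sigma_{kl}$ has strictly larger weight, so it is the $\prec$-initial term of the binomial, which is the term corresponding to the non-intersecting edges, as claimed.
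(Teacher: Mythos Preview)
Your approach is essentially the same as the paper's: both normalize the cyclic order, partition the remaining vertices into the four open arcs between $i,j,k,l$, and compare $w(\sigma_{ij})+w(\sigma_{kl})$ against $w(\sigma_{ik})+w(\sigma_{jl})$. The paper simply executes the count differently: rather than your edge-by-edge comparison over chord types, it writes down closed-form expressions for each of the four weights in terms of the arc sizes $p_1,p_2,p_3,p_4$ and checks that the difference is $2p_2p_4+2(p_2+p_4)+2>0$. Your chord-type case analysis would reach the same conclusion and is arguably more conceptual; the paper's formula approach is quicker and makes the strictness transparent.

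One small gap to flag: your normalization ``the four points must appear in the cyclic order $i,j,k,l$'' tacitly assumes four distinct vertices, but the lemma also covers the case where $(k,l)$ is a loop (i.e.\ $k=l$), so that the binomial is $\sigma_{ij}\sigma_{kk}-\sigma_{ik}\sigma_{jk}$. The paper treats this degenerate case separately with its own weight computation. Your method would handle it too, but you should say so explicitly rather than leave it buried under ``handle degenerate positions''.
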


\begin{proof}
We divide the set of vertices in $K_n^\circ$ into four different parts (excluding the vertices $i,j,k$ and $l$). Let $P_1$ denote the set of vertices that are present in the path between $i$ and $j$ along the edges of the $n$-gon that do not contain $k$ and $l$. Similarly, let $P_2,P_3$ and $P_4$ denote the set of vertices between $j$ and $k$, $k$ and $l$ and $l$ and $i$ respectively. Let the cardinality of each $P_i$ be $p_i$ for $i=1,2,3,4$. Then, the weight of the four variables are as follows:
\begin{eqnarray*}
w(\sigma_{ij})&=&\sum_{i=1}^4{p_i\choose2}+p_2p_3+p_2p_4+p_3p_4+2(p_2+p_3+p_4)+1+(n-2) \\
w(\sigma_{kl})&=&\sum_{i=1}^4{p_i\choose2}+p_1p_2+p_1p_4+p_2p_4+2(p_1+p_2+p_4)+1+(n-2) \\
w(\sigma_{ik})&=&\sum_{i=1}^4{p_i\choose2}+p_1p_2+p_3p_4+p_1+p_2+p_3+p_4+(n-2) \\
w(\sigma_{jl})&=&\sum_{i=1}^4{p_i\choose2}+p_1p_4+p_2p_3+p_1+p_2+p_3+p_4+(n-2).
\end{eqnarray*}
This gives us  
\begin{eqnarray*}
w(\sigma_{ij})+w(\sigma_{kl})-(w(\sigma_{ik})+w(\sigma_{jl}))=2p_2p_4+2(p_2+p_4)+2>0.
\end{eqnarray*}
Hence, the initial term of $\sigma_{ij}\sigma_{kl}-\sigma_{ik}\sigma_{jl}$ with respect to $\prec$ is $\sigma_{ij}\sigma_{kl}$. Further, if $k=l$ then we have the binomial $\sigma_{ij}\sigma_{kk}-\sigma_{ik}\sigma_{jk}$ where 
\begin{eqnarray*}
w(\sigma_{kk})&=& {n-1\choose 2} + n-1 \text{  and } \\
w(\sigma_{jk})&=&\sum_{i=1}^4{p_i\choose 2}+p_1p_4+p_1p_3+p_3p_4+2(p_1+p_3+p_4)+1+(n-2).
\end{eqnarray*}
This gives us
\begin{eqnarray*}
w(\sigma_{ij})+w(\sigma_{kk})-(w(\sigma_{ik}+w(\sigma_{jk}))&=&\sum_{i=2}^4 \frac{p_i}{2}+2(p_2p_3+p_2p_4)+\frac{3}{2}(p_2+p_3+p_4)+p_2+4 > 0.
\end{eqnarray*}
So, the initial term of $\sigma_{ij}\sigma_{kk}-\sigma_{ik}\sigma_{jk}$ with respect to $\prec$ is $\sigma_{ij}\sigma_{kk}$.
\end{proof}

\begin{lem}\label{lem:added loops}
Let $S'$ be the set of binomials obtained from all the 
pairs of non-intersecting edges of $K_n^\circ$. 
Then $S'$ is the reduced Gr{\"o}bner basis of $SP_{K_n^\circ}$ with respect to $\prec$.
\end{lem}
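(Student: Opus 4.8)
The plan is to use the standard-monomial / Hilbert-function criterion for toric Gr\"obner bases, isolating one combinatorial uniqueness statement as the crux. First, $S' \subseteq SP_{K_n^\circ}$ is immediate, since $\hat\psi(\sigma_{ij}\sigma_{kl}) = a_ia_ja_ka_l = \hat\psi(\sigma_{ik}\sigma_{jl})$ (and likewise when $k=l$). By Lemma~\ref{lem:disjoint}, $\mathrm{in}_\prec(S')$ equals the squarefree monomial ideal $J$ generated by the products $\sigma_{ij}\sigma_{kl}$ over all \emph{non-intersecting} pairs of edges of $K_n^\circ$, and the standard monomials of $J$ in degree $d$ are exactly the $\sigma^u$ whose associated multiset of $d$ edges of $K_n^\circ$ is pairwise intersecting. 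Since $S' \subseteq SP_{K_n^\circ}$ forces $J \subseteq \mathrm{in}_\prec(SP_{K_n^\circ})$, and passing to an initial ideal preserves Hilbert functions, we get $\dim_\rr(\rr[\Sigma]/SP_{K_n^\circ})_d \le \#\{\text{standard monomials of degree }d\}$; conversely these standard monomials are linearly independent in $\rr[\Sigma]/SP_{K_n^\circ}$ as soon as they lie in distinct fibres of $\hat\psi$ (distinct monomials of $\rr[a]$ are linearly independent), which gives the reverse inequality. So $J = \mathrm{in}_\prec(SP_{K_n^\circ})$, hence the Gr\"obner basis property, would follow from the

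\textbf{Claim.} Two distinct pairwise-intersecting multisets of edges of $K_n^\circ$ have distinct degree vectors, where an edge $(i,j)$ contributes $e_i+e_j$ and a loop at $i$ contributes $2e_i$.

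Reducedness of $S'$ would then be a short check: the initial terms of distinct elements of $S'$ are distinct degree-two squarefree monomials, none dividing another, and the trailing term $\sigma_{ik}\sigma_{jl}$ of each element is standard because $(i,k)$ and $(j,l)$ intersect by construction. To prove the Claim I would first exhibit, for each $w \in \nn^n$ with $|w| = 2d$, a canonical pairwise-intersecting multiset realizing it: lay out $2d$ slots cyclically, $w_i$ of them at the circular position of vertex $i$, label them $p_1,\dots,p_{2d}$ in cyclic order, and put $M_{\mathrm{can}}(w) = \{\,\{v(p_r),v(p_{r+d})\} : 1 \le r \le d\,\}$, where $v(p)$ denotes the vertex of slot $p$. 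Any two chords $\{p_r,p_{r+d}\}$, $\{p_s,p_{s+d}\}$ with $r<s\le d$ occur in the cyclic order $p_r,p_s,p_{r+d},p_{s+d}$, so after collapsing slots to vertices their images either share a vertex or still cross; the only possible failure is a loop meeting a disjoint chord, but $\{v(p_r),v(p_{r+d})\}$ is a loop at $v$ only when $w_v \ge d+1$, in which case the arc of non-$v$ slots has length $\le d-1$ and contains no antipodal pair $p_s,p_{s+d}$, so every other chord meets $v$. Hence $M_{\mathrm{can}}(w)$ is standard with degree vector $w$, so $w\mapsto M_{\mathrm{can}}(w)$ is injective, and the Claim is equivalent to its surjectivity onto pairwise-intersecting multisets.

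\emph{The main obstacle is this surjectivity} --- that a pairwise-intersecting multiset is determined by its degree vector. I would take a minimal counterexample $M \ne M'$ with $\deg M = \deg M'$: cancelling a common edge contradicts minimality, so $M,M'$ are disjoint; then $M \sqcup M'$ has equal degrees in its two colours and decomposes into $M/M'$-alternating closed walks, and minimality forces a single such walk, along which --- after contracting repeats --- the even- and odd-indexed visited vertices are distinct within each parity. A short degree count rules out loops in $M$ or $M'$, since a loop at $v$ would force every other edge through $v$ and over-constrain $w_v$. If all visited vertices are distinct, then $M$ and $M'$ are two pairwise-crossing perfect matchings of the same cyclically ordered $2d$ points, and such a matching is unique --- each point must be joined to its antipode, by counting the points strictly inside any chord against the $d-1$ chords forced to cross it --- so $M = M'$, a contradiction. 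The residual case, genuine multi-edges (an even-indexed visited vertex equal to an odd-indexed one), I would handle by exploding each vertex $v$ into $w_v$ consecutive slots and lifting $M$ and $M'$ to pairwise-crossing perfect matchings of the common slot set, then invoking matching-uniqueness once more; producing such consistent crossing lifts is the delicate point. An alternative that avoids the Claim entirely is to verify Buchberger's criterion for $S'$ directly, but the $S$-pair reductions require essentially the same case analysis on the circular positions of up to six vertices, so I would not expect it to be easier.
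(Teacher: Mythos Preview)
Your approach is genuinely different from the paper's. The paper follows the template of Theorem~9.1 in Sturmfels' \emph{Gr\"obner Bases and Convex Polytopes}: it works directly with the divisibility criterion, associating to each binomial in $SP_{K_n^\circ}$ an even closed walk $\Gamma$, assuming a minimal counterexample whose initial monomial is \emph{not} divisible by any $\mathrm{in}_\prec(g)$ with $g\in S'$, picking the edge $(s,t)$ of $\Gamma$ with least circular distance, and then showing that all odd vertices after the first must lie on one side of $(s,t)$ and all even vertices on the other, forcing two even edges of $\Gamma$ to be non-intersecting --- a contradiction. The term order $\prec$ is used throughout to control which side of the binomial must already be pairwise-intersecting, and the ``least circular distance'' choice does the real work. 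Your Hilbert-function reduction to the Claim is correct and elegant, and for the loop-free, distinct-vertex case your antipodal-matching uniqueness argument is fine; what your route buys is a cleaner conceptual statement (standard monomials are exactly the pairwise-intersecting tableaux), at the cost of having to prove a sharper combinatorial uniqueness.

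The gap is precisely where you flag it: the ``residual case'' with repeated vertices. Your plan is to explode vertex $v$ into $w_v$ consecutive slots and lift $M$ (and $M'$) to pairwise-\emph{crossing} perfect matchings on the common $2d$ slots, then invoke antipodal uniqueness. But producing such a lift is not automatic. If two edges of $M$ share a vertex $v$, say $(v,a)$ and $(v,b)$ with the $v$-block followed in circular order by $a$ then $b$, only one of the two slot assignments at $v$ makes the lifted chords cross; and these local choices at different vertices must be made \emph{globally consistently}, which is a nontrivial combinatorial statement you have not established (and which fails for arbitrary edge-multisets --- it is exactly the pairwise-intersecting hypothesis that should make it work). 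Until that lifting lemma is proved, the Claim is not established, and with it the Gr\"obner basis property. The paper's walk argument sidesteps this entirely: by exploiting the term order and the minimal-circular-distance edge, it never needs to lift or to know that the pairwise-intersecting representative is unique --- only that the \emph{initial} monomial of any binomial in the ideal contains a non-intersecting pair. If you want to complete your route, the cleanest fix is probably to prove the lifting directly: at each vertex $v$, order the incident edges of $M$ by the circular position of their other endpoint (measured clockwise from the $v$-block) and assign them to the slots $v_1,\dots,v_{w_v}$ in that order; one then checks that any two lifted chords alternate, handling loops and multi-edges by a limiting convention. Alternatively, abandon the Claim and adapt the paper's minimal-walk argument, which is shorter.
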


\begin{proof}
By lemma \ref{lem:disjoint} we know that for any binomial $\sigma_{ij}\sigma_{kl}-\sigma_{il}\sigma_{jk}\in S'$, 
where $(i,j),(k,l)$ are non-intersecting edges and $(i,l),(j,k)$ intersect, %meet at a point, 
the initial term with respect to $\prec$ corresponds to the non intersecting %disjoint 
edges in $K_n^\circ$. Clearly, $\sigma_{ij}\sigma_{kl}-\sigma_{il}\sigma_{jk} \in SP_{K_n^\circ}$. 
%For any edge $(i,j)$ (with $i\neq j$) in $K_n^\circ$, the weight of $\sigma_{ij}$ increases by $n-2$ as compared to its weight in $K_n$ and the weight of $\sigma_{ii}$ becomes ${n\choose 2}+(n-1)$, keeping the disjoint edges as the initial term of the binomial.

The proof follows the basic outline  as the proof of 
Theorem 9.1 in \cite{Sturmfels(1996)}. For any even closed walk $\Gamma=(i_1,i_2,...,i_{2k-1},i_{2k},i_1)$ in $K_n^\circ$ we associate the binomial 
\[
b_\Gamma:= \prod_{l=1}^k\sigma_{i_{2l-1},i_{2l}}-\prod_{l=1}^k\sigma_{i_{2l},i_{2l+1}}
\]
which belongs to $SP_{K_n^\circ}$.
To prove that $S'$ is a Gr{\"o}bner basis, it is enough to prove that the initial 
monomial of any binomial $b_\Gamma$ is divisible by some monomial 
$\sigma_{ij}\sigma_{kl}$ which is the initial term of some binomial in 
$S'$, where $(i,j)$ and $(k,l)$ are a pair of non intersecting %disjoint
edges.
Let there exist a binomial $b_\Gamma=\sigma^u-\sigma^v\in SP_{K_n^\circ}$ with 
$in_\prec(b_\Gamma)=\sigma^u$ which contradicts the assertion. 
Then assuming that $b_\Gamma$ has minimal weight, 
we can say that each pair of edges appearing in $\sigma^v$ intersects.

The edges of the walk are labeled as even or odd, where even edges look like 
$(i_{2r},i_{2r+1})$ and the odd edges are of the form $(i_{2r-1},i_{2r})$.
We pick an edge $(s,t)$ of the walk $\Gamma$ which has the least circular 
distance between $s$ and $t$. The edge $(s,t)$ separates the vertices 
of $K_n^\circ$ except $s$ and $t$ into two disjoint 
sets $P$ and $Q$ where 
$|P|\geq |Q|$. We start $\Gamma$ at $(s,t)=(i_1,i_2)$. 
From our assertion on $b_\Gamma$ we have that each pair of odd 
(resp. even) edges intersect. Also, it can be proved that if 
$P$ contains an odd vertex $i_{2r-1}$, then it contains all 
the subsequent odd vertices $i_{2r+1},i_{2r+3},...,i_{2k-1}$. 
As the circular distance between $s$ and $t$ is the least, 
we need to have $i_3$ to be in $P$. So, all the odd vertices 
except $i_1$ lie in $P$ and all the even vertices lie in 
$Q\cup\{i_1,i_2\}$. This gives us that the two even edges 
$(i_2,i_3)$ and $(i_{2k},i_1)$ do not intersect, which is a contradiction.  
\end{proof}

Our goal next is to use Lemma \ref{lem:added loops}, to
prove that $SP_G =  CI_G$ for block graphs with one central vertex.
Recall that the set $D$  consisted of all pairs $\sigma_{ij}$ such that in the graph $G$
$i \leftrightarrow j$ does not touch the central vertex.
As the $\sigma_{ij}$ appearing in $D$ do not appear in any generators of $SP_G$, 
let us construct an associated subgraph of $K_n^\circ$ without
those edges.  Specifically, let $G^\circ$ be the graph obtained by removing 
the edges $(i,j)$ from $K_n^\circ$ such that  $\sigma_{ij} \in D$.
Note that we choose an embedding of $G^\circ$ so that each
maximal clique minus $c$ forms a contiguous block on the circle.
The placement of $c$ can be anywhere that is between the maximal blocks.
 
Figure \ref{fig:3graphs} illustrates the construction of the graph $G^\circ$ 
in an example.

\begin{ex}\label{ex:three graphs}
Let $G$ be a block graph with 5 vertices in Figure \ref{fig:3graphs}. There are 3 possible 1-clique partitions of $G$, each of them having $C=\{3\}$. The edges in $K_5^\circ$ which cannot be separated by any 1-clique partition of $G$ are $D=\{(1,2),(1,1),(2,2),(4,4),(5,5)\}$. So we remove them from $K_5^\circ$ to get $G^\circ$.

\begin{figure}
\begin{tikzpicture}
\filldraw[black]
(0,1) circle [radius=.04] node [below] {3}
(-1,2) circle [radius=.04] node [above] {1}
(-1,0) circle [radius=.04] node [below] {2}
(1,2) circle [radius=.04] node [above] {4}
(1,0) circle [radius=.04] node [below] {5}
(0,-.75) circle [radius=0] node [below] {G};
\draw
(-1,0)--++(2,2)
(-1,0)--++(0,2)
(1,0)--++(-2,2);

\draw
(4,-0.2) circle [radius=.2] node {\hspace{-1cm}3}
(6,-0.2) circle [radius=.2] node {\hspace{1cm}4}
(3.9,2.16) circle [radius=.2] node {\hspace{-1cm}2}
(6.1,2.16) circle [radius=.2] node{\hspace{1cm}5}
(5,2.7) circle [radius=.2] node [above] {\hspace{.5cm}1}
(5,-.75) circle [radius=0] node [below] {$K_5^\circ$}; 

\filldraw[black]
(4,0) circle [radius=.04] 
(6,0) circle [radius=.04] 
(4,2) circle [radius=.04] 
(6,2) circle [radius=.04]
(5,2.5) circle [radius=.04]; 

\draw
(4,0)--++(2,0)
(4,0)--++(2,2)
(4,0)--++(0,2)
(4,0)--++(1,2.5)
(4,2)--++(1,.5);
\draw[dotted]
(4,2)--++(2,0)
(6,0)--++(0,2)
(6,0)--++(-2,2)
(6,2)--++(-1,.5)
(6,0)--++(-1,2.5);

\filldraw[black]
(9,0) circle [radius=.04] node {\hspace{-1cm}}
(11,0) circle [radius=.04] node {\hspace{1cm}4}
(9,2) circle [radius=.04] node {\hspace{-1cm}2}
(11,2) circle [radius=.04] node{\hspace{1cm}5}
(10,2.5) circle [radius=.04] node [above] {\hspace{.5cm}1}
(10,-.75) circle [radius=0] node [below] {$G^\circ$}; 
\draw
(9,-0.2) circle [radius=.2] node {\hspace{-1cm}3}
(9,0)--++(2,0)
(9,0)--++(2,2)
(9,0)--++(0,2)
(9,0)--++(1,2.5);
\draw[dotted]
(9,2)--++(2,0)
(11,0)--++(0,2)
(11,0)--++(-2,2)
(11,2)--++(-1,.5)
(11,0)--++(-1,2.5);
\end{tikzpicture}
\caption{Construction of the graph $G^\circ$. The dark lines in $K_5^\circ$ correspond to the edges in $G$ whereas a dotted line between $i$ and $j$ tells us that there is no edge between $i$ and $j$ in $G$. The dotted line basically corresponds to the shortest path between the two vertices in $G$.  Note that the addition of extra
edges gives us $K_5^\circ$ and the deletion of some edges gives us $G^\circ$. \label{fig:3graphs}}
\end{figure}
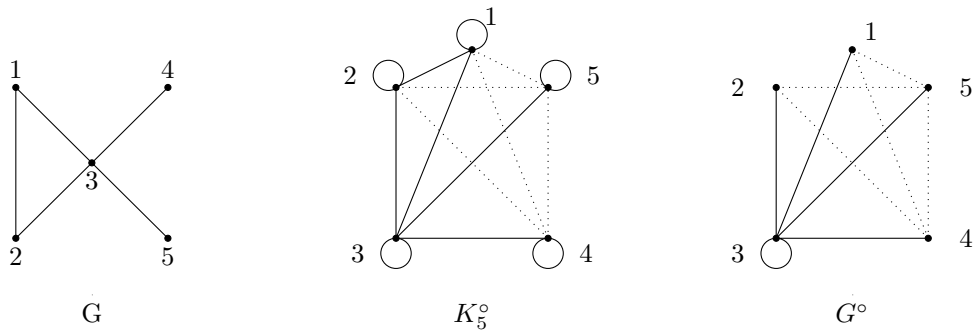
\end{ex}

\begin{lem}\label{lem:separation by 1-clique partition}
For any non intersecting %disjoint 
pair of edges $(i,j),(k,l)$ in $G^\circ$, 
there exists a 1-clique partition $(A,B,C)$ of $G$ such 
that $i,l\in A\cup C$ and $j,k\in B\cup C$. 
\end{lem}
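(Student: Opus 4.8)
The plan is to exploit the very explicit shape of a block graph with a single central vertex, together with the combinatorics of the chosen circular embedding of $G^\circ$. I first record the structure of $G$: in the (connected) block graph $G$, the blocks (maximal cliques) form a block--cut tree, and if some block $Q$ failed to contain $c$, then the cut vertex of $Q$ lying closest to $c$ in that tree would itself be a central vertex of $G$, forcing it to equal $c$ by uniqueness --- impossible, since $c\notin Q$. Hence every block of $G$ contains $c$, so $[n]=\{c\}\sqcup V_1\sqcup\cdots\sqcup V_r$, where the $Q_a=\{c\}\cup V_a$ are the maximal cliques, meeting pairwise only in $c$, and $r\ge 2$ since $G$ admits a $1$-clique partition. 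Unwinding the definition of $D$, the edges surviving in $G^\circ$ are exactly the loop at $c$, the edges incident to $c$, and the edges joining two distinct blocks; in the chosen embedding each $V_a$ is a contiguous arc with $c$ between two consecutive blocks. Finally, the $1$-clique partitions of $G$ are precisely the $(A,B,\{c\})$ obtained by grouping the components $V_1,\dots,V_r$ of $G\setminus c$ into two nonempty classes, because $c$ is the only cut vertex of $G$ and a block cannot be split (as $\{c\}\cup V_a$ is complete). So proving the lemma amounts to exhibiting a nonempty proper $S\subseteq\{1,\dots,r\}$ with $i,l\in\{c\}\cup\bigcup_{a\in S}V_a$ and $j,k\in\{c\}\cup\bigcup_{a\notin S}V_a$.

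I would phrase this as a $2$-colouring problem: colour each branch $V_a$ with $A$ or $B$ so that the branch of $i$ and the branch of $l$ receive $A$ and the branch of $j$ and the branch of $k$ receive $B$; an endpoint equal to $c$ imposes no constraint, since $c\in C$. The colouring exists as soon as no branch is over-constrained, i.e.\ as soon as $i,j$ lie in distinct branches, $i,k$ lie in distinct branches, $l,j$ lie in distinct branches, and $l,k$ lie in distinct branches (the coincidences ``branch of $i$ = branch of $l$'' and ``branch of $j$ = branch of $k$'' being harmless). In the setting where the lemma is used, the pair $(i,j),(k,l)$ is labelled so that the associated binomial in $S'$ is $\sigma_{ij}\sigma_{kl}-\sigma_{ik}\sigma_{jl}$ and all four of $\sigma_{ij},\sigma_{kl},\sigma_{ik},\sigma_{jl}$ lie in $\rr[\Sigma\setminus D]$; equivalently $(i,j),(k,l),(i,k),(j,l)$ are all non-loop edges of $G^\circ$. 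Since each such edge joins two distinct blocks, all four branch-disjointness statements hold, so the colouring exists; extending it arbitrarily over the remaining branches while keeping both colour classes nonempty (possible because $r\ge 2$, the branch of a non-$c$ endpoint among $i,l$ getting $A$ and that of a non-$c$ endpoint among $j,k$ getting $B$) produces the required partition, for which $\sigma_{ij}\sigma_{kl}-\sigma_{ik}\sigma_{jl}$ is, up to sign, the $2\times 2$ minor of $\Sigma_{A\cup C,B\cup C}$ on rows $\{i,l\}$ and columns $\{j,k\}$. The degenerate cases in which an endpoint equals $c$ (including the loop binomials $\sigma_{ij}\sigma_{cc}-\sigma_{ic}\sigma_{jc}$) go through identically, since then at most two branches receive forced colours.

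The step I expect to be the main obstacle is not the colouring but pinning down the hypotheses precisely: translating ``non-intersecting in $G^\circ$'' --- together with the implicit requirement, forced by the intended application, that the companion pair $(i,k),(j,l)$ also lies in $G^\circ$ --- into the four branch-disjointness statements, and bookkeeping the labelling so that $\sigma_{ij}\sigma_{kl}-\sigma_{ik}\sigma_{jl}$ is indeed the relevant $S'$-binomial. If one insists on using only the literal non-crossing hypothesis, the extra effort is a short case analysis of the cyclic order of $i,j,k,l$ in the embedding: two endpoints lying in a common block are cyclically adjacent with nothing (in particular not $c$) between them, and the only crossing-free arrangement compatible with this forces the companion diagonal to cross, which is exactly the information needed to re-label into the form handled above.
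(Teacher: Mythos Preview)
Your framework is correct and matches the paper's: both reduce the lemma to showing that $i,k$ lie in distinct branches $V_a$ and that $j,l$ lie in distinct branches, after which the required $2$-colouring of branches is immediate. The gap is in how you justify these two branch-disjointness statements. You assert that in the setting where the lemma is applied ``all four of $\sigma_{ij},\sigma_{kl},\sigma_{ik},\sigma_{jl}$ lie in $\rr[\Sigma\setminus D]$'', and then read off branch-disjointness from $(i,k),(j,l)\in G^\circ$. But in the proof of Lemma~\ref{lem:elimination} only the \emph{leading} monomial $\sigma_{ij}\sigma_{kl}$ is known to lie in $\rr[\Sigma\setminus D]$ at the moment Lemma~\ref{lem:separation by 1-clique partition} is invoked; the conclusion $\sigma_{ik},\sigma_{jl}\notin D$ is precisely what Lemma~\ref{lem:elimination} then \emph{derives} from the partition furnished by Lemma~\ref{lem:separation by 1-clique partition}. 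So using $(i,k),(j,l)\in G^\circ$ as an input here is circular.

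The hypothesis that \emph{is} available --- and that the paper's ``without loss of generality $i<j<k<l$'' silently encodes, as does your own remark about the $S'$-labelling --- is that $(i,k),(j,l)$ is the \emph{crossing} pair in the circular embedding. This already suffices. If $i,k$ shared a branch $V_a$, then $(i,j),(k,l)\in G^\circ$ would force $j,l\notin V_a$; but the chord $(i,k)$ has both endpoints on the contiguous arc $V_a$, so one of the two circular arcs it determines lies entirely inside $V_a$, and $j,l$ both sit in the other arc --- making it impossible for $(j,l)$ to cross $(i,k)$. Symmetrically $j,l$ cannot share a branch. This is exactly the ``short case analysis'' your final paragraph gestures at; carrying it out explicitly, rather than falling back on the unjustified assumption $(i,k),(j,l)\in G^\circ$, is what closes the argument, and is essentially what the paper does via the ordering $i<j<k<l$ together with the contiguity of the blocks.
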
 

\begin{proof}
We first prove this for the non intersecting %disjoint 
edges $(i,j),(k,l)$ with 
$i,j,k,l\neq c$. Without loss of generality we can assume that 
$i<j<k<l$. We know that for each edge $(i,j)$ in $G^\circ$ there 
exists a 1-clique partition $(A,B,C)$ of $G$ such that 
$i\in A\cup C$ and $j\in B\cup C$. This implies that 
$i$ and $j$ (similarly $k$ and $l$) lie in different maximal 
cliques of $G$. As the vertices of $G^\circ$ are labeled 
counter-clockwise, there are only three ways how the 
vertices $i,j,k,l$ can be placed:
\begin{eqnarray*}
&i)& i,l\in C_1, j,k\in C_2, \hspace{.6cm}ii) \hspace{.4cm}i,l\in C_1, j\in C_2, k\in C_3,\\
&iii)& i\in C_1, j\in C_2,k\in C_3,l\in C_4,
\end{eqnarray*}
where $C_i$ are the different maximal cliques of $G$. In all the three cases $i$ and $k$ (similarly $j$ and $l$) are in different maximal cliques. Hence there exists a 1-clique partition $(A,B,C)$ such that $i,l\in A\cup C$ and $k,j\in B\cup C$.

A similar argument can be given for the non intersecting %disjoint 
edges $(i,c),(k,l)$ and $(c,c),(i,j)$. 
\end{proof}

\begin{lem}\label{lem:elimination}
Let $S'$ be the Gr{\"o}bner basis for 
$SP_{K_n^\circ} \subseteq \mathbb{R}[\Sigma]$ 
as defined in Lemma \ref{lem:added loops}. 
Then the set $S'\cap \mathbb{R}[\Sigma \setminus D]$ 
forms a Gr{\"o}bner basis for $SP_G$.
\end{lem}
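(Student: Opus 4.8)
The plan is to apply the standard elimination property of Gröbner bases, but one has to be slightly careful because $SP_G$ lives in $\rr[\Sigma \setminus D]$ while $S'$ is a Gröbner basis of $SP_{K_n^\circ}$ in the larger ring $\rr[\Sigma]$. First I would record the key containment $SP_G = SP_{K_n^\circ} \cap \rr[\Sigma \setminus D]$. The inclusion $\supseteq$ is immediate: a binomial in $SP_{K_n^\circ}$ that uses no variable from $D$ clearly vanishes under $\hat\psi$ restricted to $\rr[\Sigma\setminus D]$, hence lies in $SP_G$. For $\subseteq$, observe that $SP_G$ is by definition the kernel of $\hat\psi\colon \rr[\Sigma\setminus D]\to\rr[a]$, which is the restriction of $\hat\psi\colon\rr[\Sigma]\to\rr[a]$ whose kernel is $SP_{K_n^\circ}$; so any element of $SP_G$ lies in $SP_{K_n^\circ}$ and trivially in $\rr[\Sigma\setminus D]$.

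Next I would invoke the elimination theorem. Choose the term order $\prec$ from Lemma \ref{lem:added loops}, and note it can be taken to be an elimination order for the block of variables $D$ — indeed, by Lemma \ref{lem:disjoint} every binomial in $S'$ has as its initial term the product $\sigma_{ij}\sigma_{kl}$ corresponding to the \emph{non-intersecting} pair of edges, so I should verify (this is the one genuine check) that for a binomial in $S'$ whose trailing term lies in $\rr[\Sigma\setminus D]$, the leading term also lies in $\rr[\Sigma\setminus D]$; equivalently that $\prec$ can be refined so that any variable of $D$ is larger than any variable not in $D$, consistently with the weight order. Once $\prec$ is an elimination order for $D$, the general fact (e.g. Sturmfels, \emph{Gröbner Bases and Convex Polytopes}, or Cox–Little–O'Shea) gives that $S' \cap \rr[\Sigma\setminus D]$ is a Gröbner basis, with respect to $\prec$ restricted to $\rr[\Sigma\setminus D]$, of the elimination ideal $SP_{K_n^\circ}\cap\rr[\Sigma\setminus D]$, which by the previous paragraph equals $SP_G$.

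The step I expect to be the main obstacle is justifying that $\prec$ really is (or can be chosen to be) an elimination order for the variables in $D$ — i.e. that the weight-based description of the leading terms in Lemma \ref{lem:disjoint} is compatible with putting all $D$-variables at the top. Here Lemma \ref{lem:separation by 1-clique partition} does the combinatorial work: for every binomial $\sigma_{ij}\sigma_{kl}-\sigma_{ik}\sigma_{jl} \in S'$ with $(i,j),(k,l)$ non-intersecting edges of $G^\circ$ (so $\sigma_{ij},\sigma_{kl}\notin D$), the crossing pair $(i,k),(j,l)$ need not both be edges of $G^\circ$, but the point is that the leading monomial $\sigma_{ij}\sigma_{kl}$ is automatically $D$-free, so membership of a binomial of $S'$ in $\rr[\Sigma\setminus D]$ is equivalent to its \emph{leading} term being $D$-free. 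That is exactly the property needed to conclude that $\{\,g\in S' : g\in\rr[\Sigma\setminus D]\,\} = \{\,g\in S' : \mathrm{in}_\prec(g)\in\rr[\Sigma\setminus D]\,\}$, which is precisely what makes the elimination argument go through without having to re-run a Buchberger-style computation. I would close by remarking that the resulting leading-term ideal of $SP_G$ is generated by the $\sigma_{ij}\sigma_{kl}$ over non-intersecting pairs of edges of $G^\circ$, setting up the comparison with $CI_G$ in the next lemma.
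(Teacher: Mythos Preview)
Your overall architecture is right --- identify $SP_G = SP_{K_n^\circ}\cap\rr[\Sigma\setminus D]$ and then argue that $S'\cap\rr[\Sigma\setminus D]$ is a Gr\"obner basis of this elimination ideal --- and this is essentially what the paper does. But the key step is mis-argued.

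The crucial implication you need is: if $f=\sigma_{ij}\sigma_{kl}-\sigma_{ik}\sigma_{jl}\in S'$ has its leading term $\sigma_{ij}\sigma_{kl}$ in $\rr[\Sigma\setminus D]$ (i.e.\ $(i,j),(k,l)$ are edges of $G^\circ$), then the trailing term $\sigma_{ik}\sigma_{jl}$ is \emph{also} in $\rr[\Sigma\setminus D]$. You assert the opposite --- ``the crossing pair $(i,k),(j,l)$ need not both be edges of $G^\circ$'' --- and then try to conclude the equivalence anyway. That does not work: if the trailing term could involve a $D$-variable, your equivalence $\{g\in S': g\in\rr[\Sigma\setminus D]\}=\{g\in S': \mathrm{in}_\prec(g)\in\rr[\Sigma\setminus D]\}$ would fail, and so would the elimination argument. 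This is exactly where Lemma~\ref{lem:separation by 1-clique partition} is used, and it says the \emph{opposite} of what you wrote: given non-intersecting edges $(i,j),(k,l)$ of $G^\circ$, there is a $1$-clique partition $(A,B,\{c\})$ with $i,l\in A\cup C$ and $j,k\in B\cup C$; hence $i,k$ (resp.\ $j,l$) lie on opposite sides of $c$, so the shortest paths $i\leftrightarrow k$ and $j\leftrightarrow l$ pass through $c$, and therefore $\sigma_{ik},\sigma_{jl}\notin D$. That is the missing content.

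A secondary issue: the detour through ``refine $\prec$ to an elimination order for $D$'' is both unnecessary and not obviously sound. The weight order $\prec$ is \emph{not} an elimination order for $D$ (for instance $\sigma_{cc}\notin D$ has maximal weight), and preserving the leading terms of the elements of $S'$ under a new order does not by itself guarantee that $S'$ remains a Gr\"obner basis under that order. The paper bypasses this entirely: take any $g\in SP_G\subseteq SP_{K_n^\circ}$; since $S'$ is a Gr\"obner basis for $SP_{K_n^\circ}$ with respect to $\prec$, some $\mathrm{in}_\prec(f)$ with $f\in S'$ divides $\mathrm{in}_\prec(g)\in\rr[\Sigma\setminus D]$; then the implication above (via Lemma~\ref{lem:separation by 1-clique partition}) gives $f\in\rr[\Sigma\setminus D]$, so $f\in S'\cap\rr[\Sigma\setminus D]$, and you are done. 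No change of term order is needed.
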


\begin{proof}
Let $g=\sigma^u-\sigma^v$ be an arbitrary binomial in 
$SP_G = \ker \hat \psi$.
This implies that the initial term of 
$g$ is contained in $\mathbb{R}[\Sigma \setminus D]$. 
Since $S'$ is a Gr{\"o}bner basis for $SP_{K_n^\circ}$ with respect to $\prec$, 
there must exist some $f\in S'$ such that ${\rm in}_{\prec}(f)$ divides 
$in_{\prec}(g)$. This gives us that the initial term of 
$f$ is contained in $\mathbb{R}[\Sigma \setminus D]$.

So it is enough to show that for every $f\in S'$ whose leading term 
is in $\mathbb{R}[\Sigma \setminus D]$ is actually contained in 
$\mathbb{R}[\Sigma \setminus D]$. Let 
\[
f=\sigma_{ij}\sigma_{kl}-\sigma_{ik}\sigma_{jl}
\]
be a binomial in $S'$ whose leading term is contained in 
$\mathbb{R}[\Sigma \setminus D]$. 
Let $\sigma_{ij}\sigma_{kl}$ be the leading term. 
Then the edges $(i,j), (k,l)$ are  non intersecting %disjoint
as the initial term 
of each binomial in $S'$ corresponds to the non intersecting %disjoint
edges. 
So by Lemma \ref{lem:separation by 1-clique partition}, there must exist a 1-clique partition $(A,B,C)$ of $G$ which separates the edges 
$(i,j)$ and $(k,l)$, that is, $i,l\in A\cup C$ and $j,k\in B\cup C$.
 This implies that $(A,B,C)$ also separates the edges $(i,k)$ and $(j,l)$. 
 Hence we can say that $\sigma_{ik},\sigma_{jl}\notin D$ and 
 $\sigma_{ij}\sigma_{kl}-\sigma_{ik}\sigma_{jl}\in\mathbb{R}[\Sigma \setminus D]$.
\end{proof}

Now that we have all the required results, we prove the main result of this section.

\begin{thm}\label{thm:main1}
Let $G$ be a block graph with $n$ vertices having only one central vertex. 
Then the set of all $2\times 2$ minors of 
$\Sigma_{A\cup C, B\cup C}$ for all possible 1-clique partitions 
$(A,B,C)$ of $G$ form a Gr{\"o}bner basis for $SP_G$. In particular, $SP_G=CI_G$.
\end{thm}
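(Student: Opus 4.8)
The plan is to read off the Gr\"obner basis of $SP_G$ supplied by Lemma \ref{lem:elimination}, recognize each of its elements as a $2\times 2$ minor of one of the matrices $\Sigma_{A\cup C, B\cup C}$, and then verify the (easy) reverse inclusion to conclude $SP_G = CI_G$; the Gr\"obner basis statement will follow formally.

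First I would record that $CI_G \subseteq SP_G$. This is immediate from the shape of $\hat{\psi}$: since $\hat{\psi}(\sigma_{ij}) = a_i a_j$, any $2\times 2$ minor $\sigma_{pr}\sigma_{qs} - \sigma_{ps}\sigma_{qr}$ of a submatrix $\Sigma_{A\cup C, B\cup C}$ is sent to $a_p a_q a_r a_s - a_p a_q a_r a_s = 0$, hence lies in $\ker\hat{\psi} = SP_G$; summing over all $1$-clique partitions gives $CI_G \subseteq SP_G$.

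For the reverse inclusion, by Lemma \ref{lem:elimination} the set $S' \cap \rr[\Sigma \setminus D]$ is a Gr\"obner basis of $SP_G$ with respect to $\prec$, so it suffices to show each of its elements lies in $CI_G$. Take $f = \sigma_{ij}\sigma_{kl} - \sigma_{ik}\sigma_{jl} \in S'$ with $f \in \rr[\Sigma \setminus D]$, so $(i,j)$ and $(k,l)$ are edges of $G^\circ$ (because $\sigma_{ij}, \sigma_{kl} \notin D$) and they are non-intersecting (by the way $S'$ is constructed). By Lemma \ref{lem:separation by 1-clique partition} there is a $1$-clique partition $(A,B,C)$ of $G$ with $i, l \in A\cup C$ and $j, k \in B\cup C$. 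Choosing rows $i, l$ and columns $j, k$ of $\Sigma_{A\cup C, B\cup C}$, the corresponding $2\times 2$ minor is $\sigma_{ij}\sigma_{lk} - \sigma_{ik}\sigma_{lj}$, which equals $f$ by symmetry of $\Sigma$. Hence $f \in CI_G$, so $SP_G = \langle S' \cap \rr[\Sigma \setminus D] \rangle \subseteq CI_G$, and combined with the previous paragraph $SP_G = CI_G$. To upgrade this to the Gr\"obner basis claim, let $T$ be the set of all $2\times 2$ minors of all the matrices $\Sigma_{A\cup C, B\cup C}$: the argument just given shows $S' \cap \rr[\Sigma \setminus D] \subseteq T$, while the second paragraph shows $T \subseteq SP_G$, so ${\rm in}_\prec(SP_G) = \langle {\rm in}_\prec(S' \cap \rr[\Sigma \setminus D]) \rangle \subseteq \langle {\rm in}_\prec(T) \rangle \subseteq {\rm in}_\prec(SP_G)$, forcing $T$ to be a Gr\"obner basis of $SP_G$ with respect to $\prec$.

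The step I expect to be the main obstacle is the identification in the third paragraph: the labels $i,j,k,l$ of a binomial in $S'$ arise from a fixed circular embedding together with a choice of which of the two ``crossing'' pairs of edges is the intersecting one, and one must make sure the relevant four vertices genuinely land on opposite sides of an honest $1$-clique partition of $G$ — including the degenerate cases where one of the indices equals the central vertex $c$ or where the minor involves a diagonal entry $\sigma_{pp}$. These are precisely the cases already isolated in Lemmas \ref{lem:disjoint} and \ref{lem:separation by 1-clique partition}, so the verification should reduce to invoking those results in the correct order rather than to any new combinatorial input.
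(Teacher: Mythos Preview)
Your proof is correct and follows essentially the same route as the paper: both arguments hinge on Lemma~\ref{lem:elimination} (that $S'\cap\rr[\Sigma\setminus D]$ is a Gr\"obner basis for $SP_G$) together with Lemma~\ref{lem:separation by 1-clique partition} to identify each element of that Gr\"obner basis with a $2\times 2$ minor of some $\Sigma_{A\cup C,B\cup C}$. The paper organizes the final step slightly differently---classifying the minors of $CI_G$ into three types according to which pairs of edges cross and comparing $S\cap S'$ with $S'\cap\rr[\Sigma\setminus D]$---but your sandwich $\langle{\rm in}_\prec(S'\cap\rr[\Sigma\setminus D])\rangle\subseteq\langle{\rm in}_\prec T\rangle\subseteq{\rm in}_\prec(SP_G)$ reaches the same conclusion more directly.
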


\begin{proof}
We rearrange the graph by placing the vertices in $K_n^\circ$ 
such that there is no intersection among the edges of $G$ in 
$A\cup C$ and $B\cup C$ for any 1-clique partition $(A,B,C)$ 
(with $C=\{c\}$). We complete the graph by drawing the remaining 
edges with dotted lines.

The complete graph $K_n^\circ$ gives us a partial term order on 
$\mathbb{R}[\Sigma]$ by defining the weight of the variable 
$\sigma_{ij}$ as the number of edges of $K_n^\circ$ which do
not intersect %meet
the edge $(i,j)$. Let $\prec$ denote the term order 
that refines the partial order on monomials specified by the weights. 
Let $S$ be the set of all $2\times 2$ minors of $\Sigma_{A\cup C,B\cup C}$ 
for all possible 1-clique partitions of $G$. Any binomial in $S$ has 
one of the three forms: 
\begin{enumerate}[i)]
\item  $\sigma_{ij}\sigma_{kl}-\sigma_{ik}\sigma_{jl}$ 
 with  $ i,l\in A\cup C \text{ and } j,k\in B\cup C $  
\item  $\sigma_{ij}\sigma_{kl}-\sigma_{il}\sigma_{jk}$ 
 with $ i,k\in A\cup C \text{ and } j,l\in B\cup C $ 
\item  $\sigma_{il}\sigma_{jk}-\sigma_{ik}\sigma_{jl} $
with  $ i,j\in A\cup C \text{ and } k,l\in B\cup C. $
\end{enumerate}
Here $(i,j),(k,l)$ and $(i,l),(j,k)$ are the non intersecting %disjoint 
pairs of edges and $(i,k)(j,l)$ is the intersecting pair in $G^\circ$. 
So any binomial in $S$ of the form $(i)$ or $(iii)$ is contained in $S'$. 
If the binomial $\sigma_{ij}\sigma_{kl}-\sigma_{il}\sigma_{jk}$ 
(of form $(ii)$) is in $S$, then by Lemma 
\ref{lem:separation by 1-clique partition} we know that the binomials 
$\sigma_{ij}\sigma_{kl}-\sigma_{ik}\sigma_{jl}$ and
 $\sigma_{il}\sigma_{jk}-\sigma_{ik}\sigma_{jl}$ are also in $S$. As
\[
\sigma_{ij}\sigma_{kl}-\sigma_{il}\sigma_{jk}=
\sigma_{ij}\sigma_{kl}-\sigma_{ik}\sigma_{jl}-(\sigma_{il}\sigma_{jk}-\sigma_{ik}\sigma_{jl}),
\]
we can conclude that $S$ and $S\cap S'$ generate the same ideal.
Furthermore, the set $S \cap S'$ has the same initial terms as
$S' \cap \rr[\Sigma \setminus D]$  so this guarantees that
$S$ is a Gr\"obner basis for $SP_G$ as well.
\end{proof}

%%%%%%%%%%%%%%%%%%%%%%%%%%%%%%%%%%%%%%%%%%%%%%
%%%%%%%%%%%%%%%%%%%%%%%%%%%%%%%%%%%%%%%%%%%%%%
%%%%%%%%%%%%%%%%%%%%%%%%%%%%%%%%%%%%%%%%%%%

\section{The shortest path ideal for an arbitrary block graph}  \label{sec:allgraphs}

To generalize the statement in Theorem \ref{thm:main1} 
for any arbitrary block graph, we further exploit
the toric structure of the ideal $SP_G$.
As $SP_G$ is the kernel of a monomial map, it is a toric ideal,
a prime ideal generated by binomials.  Finding a generating set of 
$SP_G$ is equivalent to finding a set of binomials that make some associated
graphs connected.  We use this perspective to prove that $SP_G = CI_G$.

From the shortest path map 
$\psi$, we can obtain the matrix 
$M_\psi$ as shown in Example \ref{ex:Mphi}. 
So $SP_G = \ker(\psi)$ is the toric ideal of the matrix $M_\psi$ as 
\[
\psi(\sigma^u)=t^{M_\psi u},
\]
where $\sigma=(\sigma_{11},\sigma_{12},...,\sigma_{nn})$ and 
$t=(a_1,a_2,...,a_n,k_{12},...,k_{n-1n})$.

Let $G=([n],E)$ be a block graph. For any vector $b\in \mathbb{N}^{(n+|E|)}$, 
the \emph{fiber} of $M_\psi$ over $b$ is defined as 
\[
M_\psi^{-1}(b)=\{u\in \mathbb{N}^{(n^2+n)/2}:M_\psi u=b\}.
\]
As the columns of $M_\psi$ are non-zero and non-negative, $M_\psi^{-1}(b)$
is always finite for any $b\in \mathbb{N}^{(n+|E|)}$. 
Let $\mathcal{F}$ be any finite subset of $\ker_\zz(M_\psi)$. 
The \emph{fiber graph}  $M_\psi^{-1}(b)_\mathcal{F}$ is defined as follows:
\begin{enumerate}[i)]
\item The nodes of this graph are the elements of $M_\psi^{-1}(b)$. 
\item Two nodes $u$ and $u'$ are connected by an edge if $u-u'\in \mathcal{F}$
 or $u'-u\in \mathcal{F}$. 
\end{enumerate}

The fundamental theorem of Markov bases connects the generating
sets of toric ideals to connectivity properties of the fiber graphs. 
We state this explicitly in the case of the fiber graphs for the shortest
path maps.

\begin{thm}(Thm 5.3, \cite{Sturmfels(1996)}\label{thm:connected-generators})
Let $\mathcal{F}\subset \ker_\zz(M_\psi)$. 
The graphs $M_\psi^{-1}(b)_\mathcal{F}$ are connected for all 
$b \in \mathbb{N}M_\psi= \\ \{\lambda_1M_{\psi 1}+...+\lambda_{n+|E|}M_{\psi n+|E|} : \lambda_i \in \mathbb{N}, M_{\psi i} \text{ are columns of } M_\psi\}$ if and only if the set 
$\{\sigma^{v^+}-\sigma^{v^-}:v\in \mathcal{F}\}$ generates the toric ideal $SP_G$.
\end{thm}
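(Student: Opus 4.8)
The plan is to recognize the statement as the Fundamental Theorem of Markov bases (Theorem 5.3 of \cite{Sturmfels(1996)}; the hypotheses it needs are met because the columns of $M_\psi$ are nonzero and nonnegative, which is exactly what makes every fiber $M_\psi^{-1}(b)$ finite) and to carry out its standard proof in this concrete setting. Write $I_\mathcal{F} = \langle \sigma^{v^+} - \sigma^{v^-} : v \in \mathcal{F}\rangle$. Since each $v \in \mathcal{F}$ lies in $\ker_\zz(M_\psi)$ we have $\psi(\sigma^{v^+}) = \psi(\sigma^{v^-})$, so the containment $I_\mathcal{F} \subseteq \ker \psi = SP_G$ holds unconditionally; the content is the reverse containment versus connectivity. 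The bridge between the combinatorial and algebraic sides is the elementary dictionary: for $u, u' \in \nn^{(n^2+n)/2}$, the binomial $\sigma^u - \sigma^{u'}$ lies in $SP_G$ if and only if $M_\psi u = M_\psi u'$, i.e.\ $u$ and $u'$ lie in a common fiber $M_\psi^{-1}(b)$ with $b = M_\psi u \in \nn M_\psi$.

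For the direction ``connected fibers imply generating set'', recall that $SP_G$, being toric, is generated by binomials $\sigma^u - \sigma^{u'}$ with $M_\psi u = M_\psi u'$. Fix such a binomial, put $b = M_\psi u \in \nn M_\psi$, and use the hypothesis to get a path $u = u_0, u_1, \dots, u_r = u'$ in $M_\psi^{-1}(b)_\mathcal{F}$, so each $u_{j} - u_{j+1}$ equals $\pm v_j$ for some $v_j \in \mathcal{F}$. Then $\sigma^u - \sigma^{u'} = \sum_{j=0}^{r-1}(\sigma^{u_j} - \sigma^{u_{j+1}})$, and for each $j$ the nonnegativity of $u_j$ and $u_{j+1}$ forces $w_j := u_j - v_j^+ = u_{j+1} - v_j^-$ (up to swapping $v_j^+, v_j^-$) to have nonnegative entries, whence $\sigma^{u_j} - \sigma^{u_{j+1}} = \pm \sigma^{w_j}(\sigma^{v_j^+} - \sigma^{v_j^-}) \in I_\mathcal{F}$. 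Summing, $\sigma^u - \sigma^{u'} \in I_\mathcal{F}$; as these binomials generate $SP_G$, we get $I_\mathcal{F} = SP_G$.

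For the converse, ``generating set implies connected fibers'', fix $b \in \nn M_\psi$ and $u, u' \in M_\psi^{-1}(b)$; I want a path between them in $M_\psi^{-1}(b)_\mathcal{F}$. From $M_\psi u = M_\psi u'$ we get $\sigma^u - \sigma^{u'} \in SP_G = I_\mathcal{F}$, so there is an expression $\sigma^u - \sigma^{u'} = \sum_{i} c_i \sigma^{a_i}(\sigma^{v_i^+} - \sigma^{v_i^-})$ with $c_i \in \rr$ and $v_i \in \mathcal{F}$. Fix a term order $\prec$ and, among all such expressions, choose one whose $\prec$-largest monomial $\sigma^m$ (taken over all $\sigma^{a_i}\sigma^{v_i^{\pm}}$) is minimal. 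If $\sigma^m \in \{\sigma^u, \sigma^{u'}\}$ one peels off a single move: some summand $\sigma^{a_i}(\sigma^{v_i^+} - \sigma^{v_i^-})$ carries $\sigma^m$, the vertex $u''$ with $\sigma^{u''} = \sigma^u \mp \sigma^{a_i}(\sigma^{v_i^+}-\sigma^{v_i^-})$ is a neighbour of $u$ in the fiber graph (it lies in $M_\psi^{-1}(b)$ automatically, since $\psi$ kills each summand), and $\sigma^{u''} - \sigma^{u'} \in I_\mathcal{F}$ admits an expression with smaller leading monomial, so induction links $u''$ to $u'$. If $\sigma^m \notin \{\sigma^u, \sigma^{u'}\}$, then $\sigma^m$ must cancel, hence occurs in at least two summands, and a Buchberger-type $S$-pair reduction recombines those two contributions so as to strictly lower the leading monomial, contradicting minimality. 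The well-foundedness of $\prec$ on monomials makes the induction terminate, and all monomials appearing throughout automatically share the $M_\psi$-image $b$, so the walk stays inside the fiber.

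The main obstacle is this last direction, and within it the cancellation step: verifying that whenever the $\prec$-maximal monomial of a representation is neither $\sigma^u$ nor $\sigma^{u'}$, two summands sharing that leading monomial can always be recombined (as in the proof of Buchberger's criterion) to produce a representation with a strictly smaller leading monomial. This is precisely the argument in the proof of Theorem 5.3 of \cite{Sturmfels(1996)}; since $M_\psi$ presents no new difficulty, I would either reproduce that reduction or simply cite it, the only structural input being that the columns of $M_\psi$ are nonnegative so that all fibers are finite.
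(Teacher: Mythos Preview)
The paper does not give its own proof of this statement: it is quoted verbatim as Theorem~5.3 of \cite{Sturmfels(1996)} and used as a black box. So there is nothing in the paper to compare your argument against.

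That said, your write-up is essentially the standard proof of the Fundamental Theorem of Markov bases and is correct in outline. One comment on the converse direction: the term-order/minimal-leading-monomial reduction you sketch works, but it is more delicate than necessary. A cleaner route is to let $J$ be the $\rr$-span of all binomials $\sigma^u-\sigma^{u'}$ with $u,u'$ connected in their fiber graph, observe that $J$ is an ideal (multiplication by $\sigma_{ij}$ shifts both endpoints by $e_{ij}$, and the same $\mathcal{F}$-moves still apply), and note that $I_\mathcal{F}\subseteq J$ by construction while $J\subseteq I_\mathcal{F}$ by your forward direction; hence $J=I_\mathcal{F}$. Since the images of monomials in $\rr[\Sigma]/J$ are indexed by the connected components of the fiber graphs, $\sigma^u-\sigma^{u'}\in J$ forces $u\sim u'$. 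This avoids the Buchberger-style cancellation step you flagged as the main obstacle.
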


As we proved in Theorem \ref{thm:main1} that the set of all 
$2\times 2$ minors of $\Sigma_{A\cup C,B\cup C}$ for all possible 
1-clique partitions of $G$ form a Gr{\"o}bner basis for $\ker(\psi)$ 
for all block graphs with one central vertex, by using Theorem 
\ref{thm:connected-generators} we can say that the graph 
$M_\psi^{-1}(b)_\mathcal{F}$ is connected for 
all $b\in \mathbb{N}M_\psi$. Here $\mathcal{F}$ is the set of all 
$2\times 2$ minors of $\Sigma_{A\cup C,B\cup C}$ in the vector form, 
for all possible 1-clique partitions of $G$. 

So, to generalize the result in Theorem \ref{thm:main1} for all block graphs, 
we need to show that $M_\psi^{-1}(b)_\mathcal{F}$ is connected for any 
$b\in \mathbb{N}M_\psi$. For a fixed $b$, let $u,v\in M_\psi^{-1}(b)_\mathcal{F}$. 
This implies that both $M_\psi u $ and $M_\psi v$ are equal to $b$,
which gives us $\psi(\sigma^u-\sigma^v)=0$. Therefore, it is enough to show 
that for any $f=\sigma^u-\sigma^v\in SP_G$, 
$\sigma^u$ and $\sigma^v$ are connected by the moves in $\cf$.

Let $G$ be a block graph with $n$ vertices. 
Let $u\in \mathbb{N}^{(n^2+n)/2}$ which is a node in the graph of $M_{\psi}^{-1}(b)_\mathcal{F}$.  We represent this $u$, or equivalently $\sigma^u$, as a graph in the following way:
For each factor $\sigma_{ij}$ of $\sigma^u$ we draw the shortest path 
$i\leftrightarrow j$ along $G$ with end points at $i$ and $j$. 
For each $\sigma_{ii}$ we draw a loop at the vertex $i$. 
Let $\deg_i(\sigma^u)$ denote the \emph{degree} of a vertex $i$ in 
$\sigma^u$ which is defined to be the number of end points of paths in $\sigma^u$.
We count the loops corresponding to $\sigma_{ii}$ as having two endpoints at $i$. 

If $f=\sigma^u-\sigma^v$ is a homogeneous binomial in $SP_G$, then 
$\psi(\sigma^u)=\psi(\sigma^v)$ if and only if the following conditions are
satisfied:

\begin{enumerate}[i)]
\item The graphs of $\sigma^u$ and $\sigma^v$ both have the 
same number of paths (as $f$ is homogeneous), 
\item The graphs of $\sigma^u$ and $\sigma^v$ have the same number 
of edges between any two adjacent vertices $i$ and $j$ 
(as the exponent of $k_{ij}$ in $\psi(\sigma^u)$ gives the number 
of edges between $i$ and $j$ in the graph of $\sigma^u$), 
\item   The degree of any vertex in both the graphs is the same 
(as the exponent of $a_i$ in $\psi(\sigma^u)$ gives us the degree 
of the vertex $i$ in the graph of $\sigma^u$). 
\end{enumerate}

Next we show how to use the results from Section \ref{sec:1central}
to make moves that bring $\sigma^u$ and $\sigma^v$ closer together.
This approach works by localizing the computations at each central
vertex in the graph.

Let $c$ be a central vertex in $G$. 
We define a map $\rho_c$ between the set of vertices as follows:
\[ \rho_c(i) = \begin{cases} 
          c & i=c \\
          i &  i \text{ is adjacent to } c \\
          i' & i' \text{ is adjacent to } c \text{ and lies in } i\leftrightarrow c.
       \end{cases}
    \]
Let $G_c$ be the graph obtained by applying $\rho_c$ to the vertices of $G$. 
Note that $G$ can have multiple vertices mapped to a single vertex in $G_c$. 
The map $\rho_c$ can also be seen as a map between 
$\rr[\Sigma]$ to itself by the rule 
$\rho_c(\sigma_{ij})=\sigma_{\rho_c(i)\rho_c(j)}$.

For a vector $u \in \nn^{n(n+1)/2}$ and $c$ a central vertex 
let $u_c$ be the vector that extracts all the coordinates that correspond
to shortest paths that touch $c$.  That is, 
\[
u_c(ij)= \begin{cases}
         u(ij) & c\in i\leftrightarrow j\\
         0 & \text{otherwise}.
         \end{cases}
    \]

\begin{prop} \label{prop:psi-c1}
Suppose that $\sigma^u - \sigma^v \in SP_G$ and let $c$ be a central
vertex of $G$.  Then
$\psi_{G_c}(\rho_c(\sigma^{u_c}))-\psi_{G_c}(\rho_c(\sigma^{v_c}))=0$.
\end{prop}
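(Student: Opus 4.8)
The plan is to track, for a fixed central vertex $c$, which monomial information in $\psi_G(\sigma^u)$ is ``carried by'' the paths that touch $c$, and to show this information is preserved by $\rho_c$ and governed precisely by the equality $\psi_G(\sigma^u) = \psi_G(\sigma^v)$. The key observation is that the shortest path map factors nicely with respect to the central vertex: by Proposition \ref{prop:uniqueness}, if $c \in i \leftrightarrow j$, then the path $i \leftrightarrow j$ decomposes as $(i\leftrightarrow c) \cup (c \leftrightarrow j)$, and $c$ separates the graph into blocks, so $i$ and $j$ lie in different blocks at $c$. First I would record what $\psi_{G_c}$ does: applying $\rho_c$ collapses each vertex $i$ to the unique neighbor of $c$ on the path $i\leftrightarrow c$ (or to $i$ itself if $i$ is $c$ or adjacent to $c$), so in $G_c$ every block at $c$ becomes a single edge (plus $c$), and $\psi_{G_c}(\sigma_{\rho_c(i)\rho_c(j)})$ records only the first edge of $i\leftrightarrow c$, the first edge of $c \leftrightarrow j$, and the endpoint-degree data at $\rho_c(i), \rho_c(j)$.

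The main step is a bookkeeping comparison of exponents. Since $\psi_G$ is a monomial map, $\sigma^u - \sigma^v \in SP_G$ means $\psi_G(\sigma^u) = \psi_G(\sigma^v)$, which by conditions (i)--(iii) in the excerpt says: same number of paths, same exponent of every $k_{ij}$ (edge multiplicities), and same exponent of every $a_i$ (vertex degrees). I would argue that the restricted data also matches: the factors $\sigma_{ij}$ with $c \in i\leftrightarrow j$ are exactly those contributing a $k$-variable incident to $c$, and conversely, every occurrence of an edge $(c,s)$ in the $\psi_G$-image comes from such a factor. So the exponent of $k_{cs}$ in $\psi_G(\sigma^u)$ equals the number of paths in $\sigma^{u_c}$ whose initial edge out of $c$ (on one side) is $(c,s)$, and likewise for $v$; since these exponents agree, $\rho_c(\sigma^{u_c})$ and $\rho_c(\sigma^{v_c})$ produce the same power of each edge of $G_c$ incident to $c$. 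For the degree data: the exponent of $a_s$ for $s$ adjacent to $c$ picks up contributions both from paths through $s$ that continue to $c$ and from paths that stop at $s$ or pass through $s$ away from $c$; I need that the contributions coming specifically from $u_c$ are determined. The clean way is: in $G_c$, a vertex $\rho_c^{-1}(s)$'s block collapses to the single edge $(c,s)$, so $\psi_{G_c}(\rho_c(\sigma^{w_c}))$ records, at the $a_s$ slot, exactly (number of path-endpoints landing in the $s$-block) plus (twice the number of paths passing through that block toward $c$) — but every path in $w_c$ touches $c$, so no path ``passes through'' an $s$-block beyond its first edge; hence the $a_s$-exponent of $\psi_{G_c}(\rho_c(\sigma^{w_c}))$ is just the count of endpoints of $w_c$-paths in the $s$-block plus the count of $w_c$-paths whose first edge is $(c,s)$ counted appropriately — a quantity I must extract from the full $\psi_G$ data. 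This is where I would lean on Proposition \ref{prop:uniqueness} and the block structure to separate, in $\psi_G(\sigma^u)$, the ``$c$-side'' contributions from the rest, using that the blocks at $c$ are vertex-disjoint except at $c$.

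I expect the main obstacle to be precisely this last point: cleanly isolating, from the aggregate exponent vector $\psi_G(\sigma^u)$, the part attributable to $u_c$, since a vertex $s$ adjacent to $c$ can also serve as an endpoint or interior vertex of paths not touching $c$ (those living entirely in the $s$-block). The resolution I anticipate is to note that $\psi_{G_c}\circ\rho_c$, applied to $\sigma^{u_c}$, only ever sees the edges $(c,s)$ and the endpoint multiplicities at the $\rho_c$-images, and to show directly that each of these output coordinates is a fixed $\zz$-linear functional of the input coordinates $u(ij)$ with $c \in i \leftrightarrow j$ — a functional that can be read off from $\psi_G(\sigma^u)$ once one knows $\psi_G(\sigma^u) = \psi_G(\sigma^v)$ forces agreement on every coordinate. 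Concretely: the edge-$(c,s)$ exponent and, for the degree at $c$ itself, the exponent of $a_c$ (which counts exactly endpoints-at-$c$ plus twice paths-through-$c$, i.e. is a pure function of $u_c$) are already equal for $u$ and $v$; combining these with the block-disjointness gives that every coordinate of $\psi_{G_c}(\rho_c(\sigma^{u_c}))$ matches the corresponding coordinate of $\psi_{G_c}(\rho_c(\sigma^{v_c}))$, which is the claim. I would finish by checking the boundary cases (paths $i\leftrightarrow c$ of length one, and the loop variables $\sigma_{ii}$) separately, as they behave slightly differently under $\rho_c$.
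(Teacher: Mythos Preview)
Your overall strategy matches the paper's: verify that $\rho_c(\sigma^{u_c})$ and $\rho_c(\sigma^{v_c})$ agree on number of paths, vertex degrees, and edge multiplicities in $G_c$, each time expressing the relevant count as a function of the coordinates of $\psi_G(\sigma^u)$ alone. You correctly handle the $k_{cs}$-exponents and the degree at $c$.

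The gap is exactly where you flag it: the $a_s$-exponent of $\psi_{G_c}(\rho_c(\sigma^{u_c}))$ for $s$ adjacent to $c$. Your formula ``count of endpoints of $w_c$-paths in the $s$-block plus the count of $w_c$-paths whose first edge is $(c,s)$'' is confused (the second summand is spurious), and you never actually identify the quantity as something readable from $\psi_G(\sigma^u)$. Your worry that paths not touching $c$ pollute the $a_s$-data is a red herring, because the relevant invariant is not the $a_s$-coordinate of $\psi_G(\sigma^u)$ at all. The paper's observation, which closes the gap in one line, is
\[
\deg_s\bigl(\rho_c(\sigma^{u_c})\bigr) \;=\; \bigl(\text{exponent of } k_{sc} \text{ in } \psi_G(\sigma^u)\bigr):
\]
a path in $u_c$ has an endpoint mapping to $s$ under $\rho_c$ if and only if it traverses the edge $(s,c)$, and only $u_c$-paths ever traverse that edge. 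This \emph{is} the ``fixed $\zz$-linear functional'' you anticipated but did not pin down. The paper similarly writes the number of paths in $\sigma^{u_c}$ explicitly as $(\text{exp }a_c) + \tfrac{1}{2}\bigl(\sum_i \text{exp }k_{ic} - \text{exp }a_c\bigr)$, again purely in terms of $\psi_G(\sigma^u)$. With these two identities your outline goes through directly, and the boundary cases you mention need no separate treatment.
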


Note that we use the notation $\psi_{G_c}$ to denote that 
we use the $\psi$ map associated to the graph $G_c$. However, the map
$\psi$ associated to $G$ can be used since that will give the same result. 

\begin{proof}
We have
\[ \rho_c(\sigma_{ij})=\begin{cases}
\sigma_{ij} & i,j \text{ are adjacent to } c\\
\sigma_{ic} & i \text{ is adjacent to } c, j=c \\
\sigma_{cj} & j \text{ is adjacent to } c, i=c\\
\sigma_{i'c} & i' \text{ is adjacent to } c \text{ and } i'\in i\leftrightarrow c,  j=c\\
\sigma_{cj'} & j' \text{ is adjacent to } c \text{ and } j'\in j\leftrightarrow c, i=c\\

\sigma_{ij'} & i,j' \text{ are adjacent to } c \text{ and } j'\in c\leftrightarrow j\\
\sigma_{i'j} & i',j \text{ are adjacent to } c \text{ and } i'\in i\leftrightarrow c \\
\sigma_{i'j'} & i',j' \text{ are adjacent to } c \text{ and } i'\in i\leftrightarrow c, j'\in j\leftrightarrow c_1 \\
\sigma_{i'i'} & i' \text{ is adjacent to } c \text{ and } i' \in i\leftrightarrow c \text{ and } j\leftrightarrow c. 
\end{cases}
\]
We know that $\sigma^u$ and $\sigma^v$ have the same number of paths. 
Also, the degree of each vertex and the number of edges between 
any two adjacent vertices is the same.
So, it is enough to show that $\rho_c(\sigma^{u_c})$ and 
$\rho_c(\sigma^{v_c})$ have the same number of paths and the degree of each vertex, 
number of edges between any two adjacent vertices is also the same.
\begin{eqnarray*}
\text{ Number of paths in } \sigma^{u_c} &=& 
\text{ number of paths in } \sigma^u \text{ ending at } c +\\
&&\text{ number of paths containing } c \text{ but not ending at } c \\
&=& \text{ degree of } a_{c} \text{ in } \psi(\sigma^u) + 1/2 (\text{ number of variables of } \\
&& \text{ the form } k_{ic} \text{ in } \psi(\sigma^u) - \text{ degree of } a_{c} \text{ in } \psi(\sigma^u ) ) \\
&=& \text{ number of paths in } \sigma^{v_c}
\end{eqnarray*}
The number of paths in $\sigma^{u_c}$ and $\rho_c(\sigma^{u_c})$ 
are the same as $\rho_c$ maps monomials of degree 1 to monomials of degree 1.

For any vertex $s$ which adjacent to $c$, the degree of $s$ in $\rho_c(\sigma^{u_c})$ is 
\begin{eqnarray*}
\deg_{s}(\rho_c(\sigma^{u_c}))&=& \text{ number of edges } s\leftrightarrow c 
\text{ in } \sigma^{u} \\
&=& \text{ number of edges } s\leftrightarrow c \text{ in } \sigma^{v} \\
&=& deg_{s}(\rho_c(\sigma^{v_c})).
\end{eqnarray*}

Now, for any two vertices $i'$ and $j'$ adjacent to $c$, 
the number of edges $i'\leftrightarrow j'$ in $\rho_c(\sigma^{u_c})$ 
is $0$ as every path in $\rho_c(\sigma^{u_c})$ contains $c$. 
The number of edges $i'\leftrightarrow c$ in $\rho_c(\sigma^{u_c})$ 
is equal to the number of edges $i'\leftrightarrow c$ in $\sigma^u$, 
which is equal to the number of edges $i'\leftrightarrow c$ in $\sigma^v$.

Hence, we can conclude that 
$
\psi_{G_c}(\rho_c(\sigma^{u_c}))-\psi_{G_c}(\rho_c(\sigma^{v_c}))=0.$
\end{proof}

By Theorem \ref{thm:connected-generators} we know that we can reach 
from $\rho_c(\sigma^{u_c})$ to $\rho_c(\sigma^{v_c})$ by making a finite set 
of moves from the set of $2\times 2$ minors of $\Sigma_{A\cup C,B\cup C}$, 
for all possible 1-clique partitions of $G_c$. But from the map $\rho_c$ 
we have that for each move $\sigma_{i'j'}\sigma_{k'l'}-\sigma_{i'l'}\sigma_{k'j'}$ 
in $G_c$ there exists a corresponding move $\sigma_{ij}\sigma_{kl}-\sigma_{il}\sigma_{kj}$ 
in $G$, where $i'\leftrightarrow j'\subseteq i\leftrightarrow j$ and 
$k'\leftrightarrow l'\subseteq k\leftrightarrow l$.  
In fact, there are many such corresponding moves corresponding to all the ways
to pull back $\rho_c$.  

\begin{defn}
Let $G$ be a block graph and let $c$ be a central vertex.  
We call two monomials $\sigma^u$ and $\sigma^v$ in the same fiber
to be \emph{similar at a vertex $c$ } if the subgraph over 
$c$ and its adjacent vertices is the same for both the monomials.
\end{defn}

For a given block graph $G$ and a central vertex $c$, let $S_c$
denote the set of all $2 \times 2$ minors of all matrices $\Sigma_{A \cup C, B \cup C}$
where $(A,B,C)$ is a separation condition that is valid for $G$ with
$C = \{c\}$.

\begin{prop} \label{prop:move correspondence}
If a sequence of moves in $G_c$ take $\rho_c(\sigma^{u_c})$ to $\rho_c(\sigma^{v_c})$, 
then there exist a corresponding sequence of moves in $S_c$ which takes $\sigma^u$ 
to a monomial which is similar to $\sigma^v$ at $c$.
\end{prop}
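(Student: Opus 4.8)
The plan is to lift the given sequence of moves in $G_c$ to a sequence of moves in $S_c$ one step at a time, maintaining a monomial $\sigma^w$ in $\rr[\Sigma]$, reachable from $\sigma^u$ by $S_c$-moves, whose ``$c$-part'' $\rho_c(\sigma^{w_c})$ equals the current monomial of the $G_c$-sequence. First I would record the reformulation: two monomials in the same fiber are similar at $c$ exactly when the monomials $\rho_c(\sigma^{w_c})$ agree, since $\rho_c(\sigma^{w_c})$ records precisely the edges incident to $c$ in the drawing of $\sigma^w$ together with the way paths pass through, or terminate at, $c$ (and loops at $c$), while the rest of the subgraph ``over $c$ and its adjacent vertices'' is unchanged by $S_c$-moves, which only re-pair the half-paths meeting $c$. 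In particular, when the sequence $m_1,\dots,m_N$ has length $0$, i.e.\ $\rho_c(\sigma^{u_c}) = \rho_c(\sigma^{v_c})$, then $\sigma^u$ is already similar to $\sigma^v$ at $c$ and there is nothing to prove; so I would induct on $N$.

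For the inductive step, let $m_1$ be the first move. Recall that the moves available in $G_c$ are the $2\times 2$ minors of $\Sigma_{A'\cup\{c\},B'\cup\{c\}}$ over $1$-clique partitions $(A',B',\{c\})$ of $G_c$, since $G_c$ is again a block graph whose unique central vertex is $c$ (Theorems \ref{thm:main1} and \ref{thm:connected-generators}). Write $m_1 = \sigma_{i'j'}\sigma_{k'l'} - \sigma_{i'l'}\sigma_{k'j'}$ with $\sigma_{i'j'}\sigma_{k'l'}$ dividing $\rho_c(\sigma^{u_c})$ (interchanging the two monomials of $m_1$ if it is traversed in the opposite direction in the fiber graph); note that the two product entries of a $2\times 2$ minor are always distinct variables. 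Since $\sigma_{i'j'}$ and $\sigma_{k'l'}$ both divide $\rho_c(\sigma^{u_c}) = \prod_{c\in i\leftrightarrow j}\rho_c(\sigma_{ij})^{u_{ij}}$, there are factors $\sigma_{ij},\sigma_{kl}$ of $\sigma^{u_c}$ — necessarily with $c$ on both $i\leftrightarrow j$ and $k\leftrightarrow l$ — with $\rho_c(\sigma_{ij}) = \sigma_{i'j'}$ and $\rho_c(\sigma_{kl}) = \sigma_{k'l'}$, and being distinct variables, $\sigma_{ij}\sigma_{kl}$ divides $\sigma^u$. I would then check that $\sigma_{ij}\sigma_{kl} - \sigma_{il}\sigma_{kj}$ lies in $S_c$: the branches of $G - c$ are in natural bijection with the branches of $G_c - c$, so $(A',B',\{c\})$ pulls back to a $1$-clique partition $(A,B,\{c\})$ of $G$, and from $i',k'\in A'\cup\{c\}$ and $j',l'\in B'\cup\{c\}$ one gets $i,k\in A\cup\{c\}$ and $j,l\in B\cup\{c\}$, so $\sigma_{ij}\sigma_{kl} - \sigma_{il}\sigma_{kj}$ is a $2\times 2$ minor of $\Sigma_{A\cup\{c\},B\cup\{c\}}$.

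Applying this move to $\sigma^u$ yields $\sigma^{u'}$ with $u' = u - e_{ij} - e_{kl} + e_{il} + e_{kj}$. The key point is the commuting square: each of $\sigma_{i'j'},\sigma_{k'l'},\sigma_{i'l'},\sigma_{k'j'}$ is an entry of a separating minor of $G_c$ at $c$, so each of these paths passes through $c$ in $G_c$, and pulling this back shows that all four of $i\leftrightarrow j$, $k\leftrightarrow l$, $i\leftrightarrow l$, $k\leftrightarrow j$ pass through $c$ in $G$; hence $u'_c = u_c - e_{ij} - e_{kl} + e_{il} + e_{kj}$ and $\rho_c(\sigma^{u'_c})$ is exactly $m_1$ applied to $\rho_c(\sigma^{u_c})$. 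Now $m_2,\dots,m_N$ is a sequence in $G_c$ taking $\rho_c(\sigma^{u'_c})$ to $\rho_c(\sigma^{v_c})$, so by the inductive hypothesis there is a sequence of $S_c$-moves carrying $\sigma^{u'}$ to a monomial similar to $\sigma^v$ at $c$; prepending the move $\sigma_{ij}\sigma_{kl} - \sigma_{il}\sigma_{kj}$ completes the induction.

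The bulk of the work — and the only real subtlety — is in the second paragraph: the lift of a single move is not canonical, since there may be several factors of $\sigma^{u_c}$ over a given $\sigma_{i'j'}$, and one must argue that a legitimate choice exists, that the resulting binomial is genuinely a $2\times 2$ minor coming from a separation of $G$ at $c$ (which rests on the branch-to-branch correspondence between $G$ and $G_c$), and that all four cells of the lifted minor again correspond to paths through $c$, so that the square commutes. Once these points are in place, the induction on $N$ is routine.
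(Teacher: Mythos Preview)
Your argument is correct and follows the same strategy as the paper's proof: reduce to lifting one move at a time and verify that each lifted move preserves the local structure at $c$. The paper phrases this as ``it is enough to show that if $m$ is a move in $G_c$ and $m'$ is the corresponding move in $G$, then $m$ applied to $\rho_c(\sigma^{u_c})$ and $m'$ applied to $\sigma^u$ are similar at $c$,'' and then checks this for a single move; your explicit induction on the length $N$ is just the formal version of this reduction. You are more careful than the paper on two points it leaves implicit: that the lift $\sigma_{ij}\sigma_{kl}-\sigma_{il}\sigma_{kj}$ genuinely lies in $S_c$ (via the branch correspondence between $G-c$ and $G_c-c$), and that all four paths in the lifted minor pass through $c$ so that $\rho_c((\cdot)_c)$ commutes with applying the move. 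These are exactly the verifications needed, and your proof is a faithful (and slightly more thorough) rendering of the paper's.
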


\begin{proof}
We know that $\rho_c(\sigma^{u_c})$ and $\sigma^u$ are similar at $c$ by construction.
So, it is enough to show that if $m$ is a move in $G_c$ and $m'$ is the 
corresponding move in $G$, then $m$ applied to $\rho_c(\sigma^{u_c})$ and $m'$ applied to $\sigma^u$ are similar at $c$. 
Let $m=\sigma_{i'j'}\sigma_{k'l'}-\sigma_{i'l'}\sigma_{k'j'}$ 
be a move in $G_c$ acting on the paths $\sigma_{i'j'},\sigma_{k'l'}$ 
in $\rho_c(\sigma^{u_c})$. 
Let $m'=\sigma_{ij}\sigma_{kl}-\sigma_{il}\sigma_{kj}$ be its corresponding 
move in $S_c$ acting on the paths $\sigma_{ij},\sigma_{kl}$ in $\sigma^u$. 
As $i'\leftrightarrow j'\subseteq i\leftrightarrow j$, 
$k'\leftrightarrow l'\subseteq k\leftrightarrow l$ and $c\in i'\leftrightarrow j'$ 
and $k'\leftrightarrow l'$, $m$ and $m'$ make the same changes at $c$ in both 
the graphs. So, we can conclude that $m$ applied to $\rho_c(\sigma^{u_c})$
and $m'$ applied to $\sigma^u$ are similar at $c$.
\end{proof}

Once we have the set of moves which takes $\sigma^u$ to a monomial which is 
similar to $\sigma^v$ at $c$, we can apply the same procedure at the
other central vertices as well. To show that this
ends up producing two monomials that are similar at every central vertex
it is necessary to check that the moves obtained for a different 
central vertex $c'$ do not affect the structure previously obtained at $c$.  

\begin{prop} \label{prop:similarity preserved}
Let $m=\sigma_{ij}\sigma_{kl}-\sigma_{il}\sigma_{kj}$ be a move 
obtained from a partition with $C=\{c\}$. Let $V$ be the set of vertices in $G$. 
Then $\sigma^u$ and $m$ applied to $\sigma^u$ are similar at 
$V\setminus c$.
\end{prop}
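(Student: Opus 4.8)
The plan is to combine two elementary facts: the move $m$ changes $\sigma^u$ in a very restricted way, and — because $m$ is built from a separating vertex — every shortest path it manipulates splits at $c$ by Proposition~\ref{prop:uniqueness}. Since $m$ is a $2\times 2$ minor of $\Sigma_{A\cup C,B\cup C}$ for a $1$-clique partition $(A,B,\{c\})$ of $G$, after relabeling rows and columns we may take $i,k\in A\cup\{c\}$ and $j,l\in B\cup\{c\}$, so that applying $m$ to $\sigma^u$ replaces a factor $\sigma_{ij}\sigma_{kl}$ of $\sigma^u$ by $\sigma_{il}\sigma_{kj}$ and leaves every other factor untouched (and $m\in SP_G$, so the result lies in the same fiber, making ``similar at'' meaningful). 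Because $c$ separates $A$ from $B$, each of the four shortest paths $i\leftrightarrow j$, $k\leftrightarrow l$, $i\leftrightarrow l$, $k\leftrightarrow j$ contains $c$, and by Proposition~\ref{prop:uniqueness} each is the concatenation at $c$ of an \emph{$A$-half} (one of $i\leftrightarrow c$, $k\leftrightarrow c$, which is a path in the subgraph $G_1$ induced on $A\cup\{c\}$) and a \emph{$B$-half} (one of $c\leftrightarrow j$, $c\leftrightarrow l$, a path in the subgraph $G_2$ induced on $B\cup\{c\}$); if one of $i,j,k,l$ equals $c$ the corresponding half is the empty path, and the loop case $\sigma_{ij}=\sigma_{cc}$ is handled the same way.

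The key observation is that $m$ does not alter the $A$-halves or the $B$-halves, only which ones are joined at $c$: before the move the two affected factors $\sigma_{ij},\sigma_{kl}$ supply the $A$-halves $i\leftrightarrow c$, $k\leftrightarrow c$ and the $B$-halves $c\leftrightarrow j$, $c\leftrightarrow l$, while after the move the factors $\sigma_{il},\sigma_{kj}$ supply exactly the same multiset of $A$-halves and the same multiset of $B$-halves.

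Now fix an arbitrary vertex $c'\in V\setminus\{c\}$. As $(A,B,\{c\})$ partitions $V$, we have $c'\in A$ or $c'\in B$; say $c'\in A$. Every neighbor of $c'$ lies in $A\cup\{c\}$ (a neighbor in $B$ would join $c'\in A$ to $B$ by an edge avoiding $c$), hence $\{c'\}\cup N_G(c')\subseteq A\cup\{c\}$, and every edge of the path-multigraph of $\sigma^u$ meeting $\{c'\}\cup N_G(c')$, together with every path-segment of $\sigma^u$ running through $c'$, belongs to a path of $\sigma^u$ lying entirely inside $G_1$ — the $B$-halves live in $G_2$ and use no edge incident to a vertex of $A$, so they never enter the star of $c'$. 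Among the paths of $\sigma^u$ contained in $G_1$, the only ones changed by $m$ are the $A$-halves $i\leftrightarrow c$ and $k\leftrightarrow c$ coming from the two affected factors, and these are unchanged by the key observation; all other factors of $\sigma^u$ are left alone by $m$. Therefore the subgraph of $\sigma^u$ over $c'$ and its adjacent vertices coincides with that of the monomial obtained by applying $m$. The case $c'\in B$ is identical with the roles of $G_1$ and $G_2$ exchanged, so $\sigma^u$ and $m$ applied to $\sigma^u$ are similar at every vertex of $V\setminus c$, which is the claim.

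I expect the only delicate step to be verifying that the neighborhood of a vertex $c'$ that is \emph{adjacent} to $c$ still ``sees only one side'' of the partition: one has to confirm that all edges in the star of such a $c'$, and hence all the path data recorded by ``the subgraph over $c'$ and its adjacent vertices,'' come solely from $G_1$-paths, so that the re-routing $m$ performs at $c$ is invisible at $c'$. The remaining degenerate configurations — some of $i,j,k,l$ equal to $c$, or a loop $\sigma_{cc}$ — need no separate treatment, since an empty half-path contributes nothing.
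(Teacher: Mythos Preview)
Your argument is correct and is essentially the paper's: both rest on the fact that the move $m$ keeps the two half-paths $i\leftrightarrow c$, $k\leftrightarrow c$ on the $A$ side (and $c\leftrightarrow j$, $c\leftrightarrow l$ on the $B$ side) intact, so nothing changes locally at any vertex other than $c$. Where the paper checks this by a short case split on whether a given vertex $s\neq c$ lies on one, both, or neither of the two affected paths, you package the same idea more globally via the $A$-half/$B$-half decomposition; the mild looseness in calling the $A$-halves ``paths of $\sigma^u$ contained in $G_1$'' (they are restrictions of paths, not full factors) is harmless.
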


\begin{proof}
If $s$ is any vertex which is not in $i\leftrightarrow j$ or 
$k\leftrightarrow l$, then $\sigma^u$ and $m$ applied to $\sigma^u$ remain similar at 
$s$ as the move does not make any change at $s$. 
If $s\neq c$ is a vertex in $i\leftrightarrow j$, we then consider 2 cases:

Case 1:  $s\in i\leftrightarrow j$ and $s\notin k\leftrightarrow l$ 

Let $s\in i\leftrightarrow c$. As $m$ converts $i\leftrightarrow c \leftrightarrow j$ 
to $i \leftrightarrow c \leftrightarrow l$, 
$i \leftrightarrow c$ is contained in $i \leftrightarrow l$. 
This implies that $s$ and all the vertices in $i\leftrightarrow j$ adjacent to $s$ 
are also present in $i\leftrightarrow l$. A similar argument
applies for $s\in c\leftrightarrow j$.

Case 2: $s\in i\leftrightarrow j$ and $s\in k\leftrightarrow l$

Let $s\in i\leftrightarrow c$ and $s\in k\leftrightarrow c$. 
As $m$ converts $i\leftrightarrow c \leftrightarrow j$ to 
$i \leftrightarrow c \leftrightarrow l$ and $k\leftrightarrow c \leftrightarrow l$ 
to $k \leftrightarrow c \leftrightarrow j$, 
$i \leftrightarrow c$ is contained in $i \leftrightarrow l$ and $k \leftrightarrow c$ 
is contained in $k \leftrightarrow j$. 
So $s$ and all the vertices in $i\leftrightarrow j$ 
($k\leftrightarrow l$) adjacent to $s$ are present in 
$i\leftrightarrow l$ ($k\leftrightarrow j$). 
A similar argument applies for $s\in c\leftrightarrow j, c\leftrightarrow l$.

In both the cases, $m$ preserves the structure of 
$\sigma^u$ around the vertex $s$. Hence, $\sigma^u$ and $m$ applied to $\sigma^u$ 
are similar at all the vertices in $V\setminus c$. 
\end{proof}

Note an important key feature that follows from the proof of
Proposition \ref{prop:similarity preserved}:
If $m$ can be obtained from two partitions $(A_1, B_1, C_1)$ and $(A_2, B_2, C_2)$
with different central vertices, then $\sigma^u$ and 
$m$ applied to $\sigma^u$ are similar at the central vertices as well.

We now give a proof for the generalized version of Theorem \ref{thm:main1}.

\begin{thm}\label{thm: ker(psi) = J_G}
Let $G$ be a block graph. Then the shortest path ideal $SP_G$
is generated by the set of all $2\times 2$ minors of 
$\Sigma_{A\cup C,B\cup C}$, for all possible 1-clique partitions of $G$, i.e, $SP_G=CI_G$.
\end{thm}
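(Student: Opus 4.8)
The plan is to prove the generalized statement $SP_G = CI_G$ for an arbitrary block graph $G$ by combining Theorem~\ref{thm:main1} (the one-central-vertex case) with the Markov basis machinery of Theorem~\ref{thm:connected-generators}, localizing at each central vertex. By Theorem~\ref{thm:connected-generators}, it suffices to show that for every binomial $f = \sigma^u - \sigma^v \in SP_G$, the two monomials $\sigma^u$ and $\sigma^v$ are connected by a sequence of moves drawn from $\bigcup_c S_c$ (over all central vertices $c$), i.e.\ by $2\times 2$ minors of $\Sigma_{A\cup C, B\cup C}$ across all $1$-clique partitions. The strategy is an induction on the number of central vertices of $G$: we will use the moves localized at one central vertex $c$ to make $\sigma^u$ and $\sigma^v$ similar at $c$ (in the sense of Definition after Proposition~\ref{prop:psi-c1}), then pass to the remaining central vertices without disturbing what was achieved at $c$.

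The key steps, in order, are as follows. First, fix a central vertex $c$. By Proposition~\ref{prop:psi-c1}, since $\sigma^u - \sigma^v \in SP_G$ we have $\psi_{G_c}(\rho_c(\sigma^{u_c})) = \psi_{G_c}(\rho_c(\sigma^{v_c}))$, so $\rho_c(\sigma^{u_c}) - \rho_c(\sigma^{v_c}) \in SP_{G_c}$. The graph $G_c$ is a block graph with a single central vertex (namely $c$), so by Theorem~\ref{thm:main1} there is a sequence of $2\times 2$-minor moves in $G_c$ taking $\rho_c(\sigma^{u_c})$ to $\rho_c(\sigma^{v_c})$. Second, by Proposition~\ref{prop:move correspondence} this lifts to a sequence of moves in $S_c$ taking $\sigma^u$ to a monomial $\sigma^{u'}$ that is similar to $\sigma^v$ at $c$. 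Third, by Proposition~\ref{prop:similarity preserved} and the remark following it, applying moves associated to a \emph{different} central vertex $c'$ does not disturb the subgraph around $c$ (and the case where a move is shared by two partitions with different central vertices is covered by the key feature noted after Proposition~\ref{prop:similarity preserved}). Hence we may induct: after making $\sigma^u$ and $\sigma^v$ similar at $c$, we restrict attention to a suitable subgraph or quotient and repeat the argument at the remaining central vertices, so that after finitely many rounds we reach a monomial $\sigma^{\tilde u}$ that is similar to $\sigma^v$ at every central vertex.

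Finally, we must argue that being similar at every central vertex, together with lying in the same fiber (same number of paths, same edge multiplicities on every edge, same degree at every vertex), forces $\sigma^{\tilde u} = \sigma^v$ outright. The point is that a monomial $\sigma^w$ is determined by its associated collection of shortest paths in $G$, and each shortest path between two vertices of a block graph passes through a sequence of central vertices (the cut vertices it crosses), being completely pinned down by how it enters and leaves the neighborhood of each such central vertex; once the local picture at every central vertex agrees and the global edge-count and degree data agree, the multiset of paths — and hence the monomial — must coincide. So $SP_G = \langle S_c : c \text{ central} \rangle = CI_G$, using the description of $CI_G$ for block graphs from the introduction. I expect the main obstacle to be the bookkeeping in this last step and in setting up the induction cleanly: one has to be careful that "similar at $c$" for all $c$ plus the fiber invariants genuinely determines the monomial, and that the moves used at later central vertices can always be chosen to avoid undoing earlier progress — essentially a careful use of Proposition~\ref{prop:similarity preserved} to ensure the local structures at already-processed central vertices are frozen while we work on the next one.
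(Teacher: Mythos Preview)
Your overall strategy matches the paper's through the first three steps: localize at each central vertex, use Theorem~\ref{thm:main1} and Propositions~\ref{prop:psi-c1}--\ref{prop:similarity preserved} to make $\sigma^u$ and $\sigma^v$ similar at every central vertex. But your final step is wrong: similarity at every central vertex together with the fiber invariants does \emph{not} force $\sigma^{\tilde u} = \sigma^v$.

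Here is a concrete counterexample. Take $G$ to be the path $1\text{--}2\text{--}3\text{--}4\text{--}5$, with central vertices $2,3,4$. Let $\sigma^u = \sigma_{15}\sigma_{24}$ and $\sigma^v = \sigma_{14}\sigma_{25}$. One checks directly that $\psi(\sigma^u)=\psi(\sigma^v)$. At $c=2$ both give $\rho_2$-image $\sigma_{13}\sigma_{23}$; at $c=3$ both give $\sigma_{24}^2$; at $c=4$ both give $\sigma_{34}\sigma_{35}$. So $\sigma^u$ and $\sigma^v$ are similar at every central vertex and lie in the same fiber, yet $\sigma^u \neq \sigma^v$. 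Your claimed reconstruction argument (``each shortest path is pinned down by how it enters and leaves the neighborhood of each central vertex'') fails precisely because the local pictures do not record \emph{which} through-path at one central vertex is the same path as a given through-path at the next central vertex; in the example, the two paths are locally indistinguishable at $c=3$.

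This is exactly the gap the paper's proof fills with an additional argument. After achieving similarity at all central vertices, the paper does \emph{not} claim equality; instead it assumes $\sigma^u$ and $\sigma^v$ share no variable (else induct on degree), picks a path $i\leftrightarrow j$ in $\sigma^u$ absent from $\sigma^v$, finds the path $i'\leftrightarrow j'$ in $\sigma^v$ with maximal overlap $s\leftrightarrow t$ with $i\leftrightarrow j$, and uses similarity at the boundary vertex $t$ to locate a second path in $\sigma^v$ that extends past $t$ in the direction of $i\leftrightarrow j$. A single $2\times 2$ move (valid for two different $1$-clique partitions, hence preserving similarity everywhere by Proposition~\ref{prop:similarity preserved}) then strictly increases the overlap. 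Iterating produces a common variable, and induction finishes. You need to supply this path-extension argument; the rest of your outline is fine.
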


\begin{proof}
Suppose that $c_1, \ldots, c_k$ are the central vertices of $G$.
Let $S_1, \ldots S_k$ be the corresponding quadratic moves associated to
each central vertex.  
Let $f=\sigma^u-\sigma^v \in SP_G$.
By applying Proposition \ref{prop:move correspondence} and Proposition
\ref{prop:similarity preserved} together with Theorem \ref{thm:main1},
we can assume that $\sigma^u$ and $\sigma^v$ are similar at every vertex
after applying moves from $S_1, \ldots, S_k$.

 We can 
assume that $\sigma^u$ and $\sigma^v$ have no variables in common,
otherwise we could delete this variable from both monomials and do an induction 
on dimension.
So consider an arbitrary path $i\leftrightarrow j$ in 
$\sigma^{u}$ which is not present in $\sigma^v$. 
We select the path in $\sigma^v$ which has the highest number 
of common edges with $i\leftrightarrow j$. 
Let that path be $i'\leftrightarrow j'$ and let 
$s\leftrightarrow t $ be the common path in both the paths. 
Let $s_1$ and $t_1$ be the vertices adjacent to 
$s$ and $t$ respectively in $i\leftrightarrow j$. Similarly, 
let $s'$ and $t'$ be the vertices adjacent to $s$ and $t$ 
respectively in $i'\leftrightarrow j'$. Let $p$ be the vertex in 
$s\leftrightarrow t$ adjacent to $t$ (see Figure \ref{fig:path moves}
for an illustration of the idea). 

If we apply the map $\rho_t$ on both the monomials, 
we get that there exists a path $p\leftrightarrow t_1$ 
in $\rho_t(\sigma^{u})$ which is not in $\rho_t(\sigma^{v})$. 
But as $\sigma^{u}$ and $\sigma^v$ are similar at $t$, 
there must exist a path $x\leftrightarrow y$ in 
$\sigma^v$ containing $p\leftrightarrow t_1$. 
So, the move $m=\sigma_{i'j'}\sigma_{xy}-\sigma_{i'y}\sigma_{xj'}$ 
is a valid move as none of the vertices in 
$i'\leftrightarrow p$ can be adjacent to any vertex in $t_1\leftrightarrow y$ 
(as it would form a closed circuit implying that $i'\leftrightarrow t$ 
is not the shortest path). Similarly, none of the vertices in $x\leftrightarrow p$ 
can be adjacent to any vertex in $t'\leftrightarrow j'$. 
Further, this move can be obtained from two different partitions 
with central vertices $p$ and $t$ respectively. 
So, by Proposition \ref{prop:similarity preserved} and the comment after its proof, 
we know that the move $\sigma_{i'j'}\sigma_{xy}-\sigma_{i'y}\sigma_{xj'}$
preserves the similarity of all the vertices.

Applying $m$ on $\sigma^v$ increases the length of the 
common path between $i\leftrightarrow j$ and $i'\leftrightarrow j'$ by at least 1, 
while keeping the monomials $\sigma^{u}$ and $m$ applied to $\sigma^v$ similar at all the vertices. Repeating this process again, we can continue to shorten the length of
the disagreement until the resulting monomials have a common monomial,
in which case induction implies that we can use moves to connect these smaller
degree monomials.

This implies that the set of binomials $S_1 \cup \cdots \cup S_k$ generates
$SP_G$ and hence $CI_G =  SP_G$.

\begin{figure}
\begin{tikzpicture}
\filldraw[black]
(0,0) circle [radius=.04] node [below] {$i'$}
(1,0) circle [radius=.02] 
(1.15,0) circle [radius=.02] 
(1.3,0) circle [radius=.02] 
(1.45,0) circle [radius=.02]
(2,0) circle [radius=.02] node [below] {$s'$}
(3,0) circle [radius=.02] node [below] {$s$}
(3.15,0) circle [radius=.02] 
(3.3,0) circle [radius=.02]  
(3.45,0) circle [radius=.02] 
(4,0) circle [radius=.02] node [below] {$p$}
(5,0) circle [radius=.02] node [below] {$t$}
(6,0) circle [radius=.02] node [below] {$t'$}
(6.15,0) circle [radius=.02] 
(6.3,0) circle [radius=.02] 
(6.45,0) circle [radius=.02] 
(6.6,0) circle [radius=.02]
(7,0) circle [radius=.04] node [below] {$j'$}
(4,-1) circle [radius=0] node [below] {$\sigma^v$}
(2,1.75) circle [radius=.04] node [above] {$x$}
(4,.25) circle [radius=.02]
(5,.25) circle [radius=.02]
(5.66,.87) circle [radius=.03] node [below] {$t_1$}
(6.6,1.75) circle [radius=.04] node [above] {$y$}
(2.28,1.34) circle [radius=.02]
(2.587,.87) circle [radius=.03] 
(2.4,1.15) circle [radius=.02]
(5.95,1.15) circle [radius=.02]
(6.15,1.34) circle [radius=.02]

(8,0) circle [radius=.04] node [below] {$i$}
(9,0) circle [radius=.02] 
(9.15,0) circle [radius=.02] 
(9.3,0) circle [radius=.02] 
(9.45,0) circle [radius=.02]
(10,0) circle [radius=.02] node [below] {$s_1$}
(11,0) circle [radius=.02] node [below] {$s$}
(11.15,0) circle [radius=.02] 
(11.3,0) circle [radius=.02]  
(11.45,0) circle [radius=.02] 
(12,0) circle [radius=.02] node [below] {$p$}
(13,0) circle [radius=.02] node [below] {$t$}
(14,0) circle [radius=.02] node [below] {$t_1$}
(14.15,0) circle [radius=.02] 
(14.3,0) circle [radius=.02] 
(14.45,0) circle [radius=.02] 
(14.6,0) circle [radius=.02]
(15,0) circle [radius=.04] node [below] {$j$}
(3,.25) circle [radius=.02]
(11.5,-1) circle [radius=0] node [below] {$\sigma^{u}$};

\draw
(0,0)--++(1,0)
(1.45,0)--++(.55,0)
(2,0)--++(1.15,0)
(3.45,0)--++(2.7,0)
(6.6,0)--++(.4,0)
(2,1.75)--++(1,-1.5)
(3,.25)--++(2,0)
(5,.25)--++(1.6,1.5)

(8,0)--++(1,0)
(9.45,0)--++(.55,0)
(10,0)--++(1.15,0)
(11.45,0)--++(2.7,0)
(14.6,0)--++(.4,0);

\end{tikzpicture}
\caption{\label{fig:path moves}}
\end{figure}

\end{proof}

%%%%%%%%%%%%%%%%%%%%%%%%%%%%%%%%%%%%%%%
%%%%%%%%%%%%%%%%%%%%%%%%%%%%%%%%%%%%%%%%%
%%%%%%%%%%%%%%%%%%%%%%%%%%%%%%%%%%%%%%%%%%%%%%%%%%%%%%%%%%

\section{Initial term map and SAGBI bases}\label{sec:SAGBI}

In this section we put all our previous
results on shortest path maps together to prove Theorem \ref{thm: theConjecture}.
We also show that the set of polynomials $\{f_{ij} : 1 \leq i \leq j \leq n \}$
obtained from the inverse of $K$ are a SAGBI basis for the $\rr$-algebra they
generate in the case of block graphs.

\begin{proof}[Proof of Theorem \ref{thm: theConjecture}]
We have already seen that $SP_G = CI_G \subseteq P_G$.  
We just need to show that $SP_G = P_G$ to complete the proof.
Note that both $SP_G$ and $P_G$ are prime ideals so it suffices to show
that they have the same dimension.  

In both $SP_G$ and $P_G$ an upper bound on the
dimension is equal to the number of vertices plus the number of edges
in the graph.  This follows because that is the number of free
parameters in both parametrizations.  
In the case of $P_G$ this upper bound is tight, because the map that sends
$\Sigma \mapsto \Sigma^{-1}$ is the inverse map that recovers the entries of $K$. 
Since $SP_G \subseteq P_G$ we have the $\dim SP_G \geq  \dim P_G$.  
Hence they must have the same dimension. 
\end{proof}

Finally, we can show the SAGBI basis property for the polynomials 
$\{f_{ij} : 1 \leq i \leq j \leq n \}$.  Recall the definition of a
SAGBI basis (which stands for Subalgebra Analogue of Gr\"obner Basis for Ideals).  
See Chapter 11 of \cite{Sturmfels(1996)} for more details.

\begin{defn}
Let $R$ be a finitely generated subalgebra of the polynomial ring 
$\mathbb{R}[K]$. Let $\prec$ be a term order 
on $\mathbb{R}[K]$.  The \emph{initial algebra} ${\rm in}_\prec (R)$ is defined as the 
$\mathbb{R}$-vector space spanned by $\{{\rm in}_\prec (f):f\in R\}$.
A finite set of polynomials $F \subseteq R$ is called a SAGBI basis for $R$ if
\begin{enumerate}[i)]
\item  $R =  \rr[F]$, and
\item  ${\rm in}_\prec (R)  =   \rr[  \{{\rm in}_\prec (f):f\in F\}]$. 
\end{enumerate}
\end{defn}

Let $G$ be a block graph and let $F = \{f_{ij} : 1 \leq i \leq j \leq n \}$ be the polynomials
appearing as the numerators in $K^{-1}$.  To prove this, we will use some key result on
SAGBI bases.  Note that if $\prec$ is a term order on $\rr[K]$ induced by
a weight vector $\omega$, then 
this induces a partial term order on $\rr[\Sigma]$ by declaring that the weight
of the variable $\sigma_{ij}$ is the weight of the largest monomial appearing
in $f_{ij}$.  Denote by $\omega^*$ this induced weight order on $\rr[\Sigma]$.  

Both the algebras $\rr[F]$ and $\rr[  \{{\rm in}_\prec (f):f\in F\}]$ have
presentation ideals in $\rr[\Sigma]$.
In the first case, this presentation ideal is exactly $P_G$, the vanishing ideal of
the Gaussian graphical model.   That is,  $\rr[F]  =  \rr[\Sigma]/P_G$. 
In the second case, this presentation is exactly $SP_G$, the shortest path ideal,
since that is the ideal of relations among the shortest path monomials.
That is, $\rr[  \{{\rm in}_\prec (f):f\in F\}] =  \rr[\Sigma]/SP_G$.

A fundamental theorem on SAGBI bases applied in the specific case of these ideals
says the following.

\begin{thm} (Thm 11.4, \cite{Sturmfels(1996)}\label{thm:CanonicalBasis})
The set $F \subseteq \rr[K]$ is a SAGBI basis if and only if ${\rm in}_{\omega^*} (P_G)  =  SP_G$.
\end{thm}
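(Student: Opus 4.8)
The plan is to establish Theorem \ref{thm:CanonicalBasis} as the concrete instance, for $\rho^*\colon\rr[\Sigma]\to\rr[F]$ and its initial-term map $\phi$, of the standard ``canonical subalgebra basis'' criterion, proving the two implications by hand. I would use throughout that (Proposition \ref{prop:shortestpath}) ${\rm in}_\prec(f_{ij})$ is the shortest path monomial $m_{ij}$, that $m_{ij}$ is the \emph{unique} $\omega$-maximal term of $f_{ij}$ with coefficient $(-1)^{\ell(i,j)}=\pm1$, and that $\omega^*$ is defined so that $\omega^*(\sigma_{ij})=\omega(m_{ij})$. First I would record the one inclusion that holds unconditionally, ${\rm in}_{\omega^*}(P_G)\subseteq SP_G$: if $p\in P_G$ has top $\omega^*$-weight form $p_0$ of weight $w$, then the $\omega$-weight-$w$ component of $0=\rho^*(p)$ is supported only on the monomials $\prod m^{\alpha}$ (with $\sigma^{\alpha}$ a term of $p_0$), carrying signs that are constant on the fibers of $\alpha\mapsto\prod m^{\alpha}$; hence this component vanishes iff $\phi(p_0)=0$, so $p_0\in\ker\phi=SP_G$.

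For ``${\rm in}_{\omega^*}(P_G)=SP_G\Rightarrow F$ is a SAGBI basis'', given $g\in\rr[F]$ I would pick a preimage $P$ with $\rho^*(P)=g$ of minimal $\omega^*$-weight. If $\phi({\rm in}_{\omega^*}P)\ne0$, the computation above shows the top $\prec$-term of $g$ is a monomial in the $m_{ij}$, so ${\rm in}_\prec(g)\in\rr[\,{\rm in}_\prec(f_{ij})\,]$. If $\phi({\rm in}_{\omega^*}P)=0$, then ${\rm in}_{\omega^*}P\in SP_G={\rm in}_{\omega^*}(P_G)$, so ${\rm in}_{\omega^*}P={\rm in}_{\omega^*}q$ for some $q\in P_G$; then $\rho^*(P-q)=g$ while $\omega^*(P-q)<\omega^*(P)$, contradicting minimality. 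Since $\omega^*$-weights are bounded below this is enough, giving ${\rm in}_\prec(\rr[F])=\rr[\,{\rm in}_\prec(f_{ij})\,]$.

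For the converse, assuming $F$ is a SAGBI basis: since all $m_{ij}$ have the same total $K$-degree $n-1$, $SP_G$ is generated by homogeneous binomials $\sigma^u-\sigma^v$, and matching $k_{i'j'}$-exponents under $\phi$ forces $\omega^*(\sigma^u)=\omega^*(\sigma^v)$. The products $\prod f^{u}$ and $\prod f^{v}$ have the same $\prec$-leading term (leading monomial $\prod m^{u}=\prod m^{v}$, and equal $\pm1$ leading coefficients since $\sum u_{ij}\ell(i,j)=\sum v_{ij}\ell(i,j)$), so their difference has strictly smaller $\omega$-weight. I would then run the subduction algorithm on $\prod f^{u}-\prod f^{v}$ — at each step its $\prec$-leading monomial lies in ${\rm in}_\prec(\rr[F])=\rr[\,{\rm in}_\prec(f_{ij})\,]$, so I can subtract a scalar multiple of a product $\prod f^{\beta}$ to strictly lower the leading monomial, and termination follows from well-ordering. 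This writes $\prod f^{u}-\prod f^{v}=\sum_k\lambda_k\prod f^{\beta_k}$ with every $\prod m^{\beta_k}$ of $\omega$-weight $<\omega^*(\sigma^u)$, so pulling back along $\rho^*$ yields $p:=\sigma^u-\sigma^v-\sum_k\lambda_k\sigma^{\beta_k}\in P_G$ with ${\rm in}_{\omega^*}(p)=\sigma^u-\sigma^v$. Hence $\sigma^u-\sigma^v\in{\rm in}_{\omega^*}(P_G)$, and with the unconditional inclusion this yields equality.

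The hard part is not conceptual depth but bookkeeping: keeping the weight order $\omega^*$ and the refining term order $\prec$ in sync, proving that subduction terminates, and tracking the signs $(-1)^{\ell(i,j)}$ so that every ``leading parts cancel'' step is legitimate — the latter being exactly the remark after Definition \ref{def:initialmonomialmap} that $\ker\phi$ does not depend on these signs. Finally, to deduce that our $F$ actually is a SAGBI basis I would verify the hypothesis ${\rm in}_{\omega^*}(P_G)=SP_G$ of Theorem \ref{thm:CanonicalBasis}, which is now immediate: Theorem \ref{thm: ker(psi) = J_G} together with the proof of Theorem \ref{thm: theConjecture} gives $P_G=SP_G$ as ideals, and $SP_G$ is homogeneous for the grading induced by $\omega^*$ (its binomial generators relate monomials whose path-graphs contain equally many non-loop edges), so ${\rm in}_{\omega^*}(SP_G)=SP_G$. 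Therefore $F=\{f_{ij}\}$ is a SAGBI basis of $\rr[F]$ for every block graph $G$.
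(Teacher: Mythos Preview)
The paper does not prove Theorem~\ref{thm:CanonicalBasis} at all: it is quoted verbatim as Theorem~11.4 of Sturmfels' book and used as a black box to derive the subsequent Corollary. Your proposal therefore does strictly more than the paper; you unpack and prove, in this concrete setting, the SAGBI criterion that the paper merely cites.

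Your argument is correct. The unconditional inclusion ${\rm in}_{\omega^*}(P_G)\subseteq SP_G$ is justified exactly as you say, with the sign observation (that $(-1)^{\sum u_{ij}\ell(i,j)}$ depends only on the monomial $\prod m^{u}$) being the key point. In the forward implication, the existence of a minimal-$\omega^*$-weight preimage is guaranteed because the $\omega^*$-weights are nonnegative integers; the dichotomy then cleanly gives ${\rm in}_\prec(g)\in\rr[m_{ij}]$. In the converse, your subduction argument is the standard one and terminates by well-ordering of $\prec$; the check that $\omega^*(\sigma^u)=\omega^*(\sigma^v)$ and that the leading coefficients agree is exactly what makes the top terms cancel so that the subduction steps produce $\beta_k$ of strictly smaller weight. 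Your final paragraph, verifying that $P_G=SP_G$ is already $\omega^*$-homogeneous and hence ${\rm in}_{\omega^*}(P_G)=SP_G$, is essentially identical to the paper's proof of the Corollary.

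In short: the paper's ``proof'' is a citation; you have supplied a correct, self-contained proof of both the cited criterion and its application. The cost is length; the gain is that your write-up does not rely on an external reference for the central equivalence.
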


\begin{cor}
Let $G$ be a block graph.
Then the set $F \subseteq \rr[K]$ is a SAGBI basis of $\rr[F]$.  
\end{cor}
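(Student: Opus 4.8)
The plan is to reduce everything to Theorem~\ref{thm:CanonicalBasis}, which says that $F$ is a SAGBI basis for $\rr[F]$ precisely when ${\rm in}_{\omega^*}(P_G) = SP_G$ as ideals in $\rr[\Sigma]$. All the hard work is already done in the earlier sections: by Theorem~\ref{thm: ker(psi) = J_G} we have $SP_G = CI_G$, and the proof of Theorem~\ref{thm: theConjecture} gives $SP_G = P_G$. So I only need to understand the effect of passing to $\omega^*$-initial forms on $P_G$, and the point I will make is that this has no effect at all: $P_G$ is already homogeneous for the grading defined by $\omega^*$, hence ${\rm in}_{\omega^*}(P_G) = P_G = SP_G$.

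To check homogeneity, first note (Proposition~\ref{prop:shortestpath}) that the largest monomial of $f_{ij}$ is the shortest-path monomial, whose number of diagonal factors is $(n-1)-\ell(i,j)$; thus $\omega^*(\sigma_{ij}) = (n-1) - \ell(i,j)$ (with $\ell(i,i)=0$). Now take an arbitrary generator of $CI_G$, i.e.\ a $2\times 2$ minor $\sigma_{pr}\sigma_{qs} - \sigma_{ps}\sigma_{qr}$ of some matrix $\Sigma_{A\cup C,B\cup C}$ coming from a $1$-clique partition $(A,B,C)$ with $C=\{c\}$, so that $p,q\in A\cup C$ and $r,s\in B\cup C$. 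Since $c$ separates $A$ from $B$, Proposition~\ref{prop:uniqueness} gives $\ell(x,y) = \ell(x,c)+\ell(c,y)$ for every $x\in A\cup C$, $y\in B\cup C$. Substituting into the formula for $\omega^*$, both products $\sigma_{pr}\sigma_{qs}$ and $\sigma_{ps}\sigma_{qr}$ acquire the common weight $2(n-1)-\ell(p,c)-\ell(q,c)-\ell(c,r)-\ell(c,s)$, so each such minor is $\omega^*$-homogeneous. An ideal generated by $\omega^*$-homogeneous elements is $\omega^*$-graded, and for an $\omega^*$-graded ideal the top-weight form of any element again lies in the ideal, so ${\rm in}_{\omega^*}(P_G) = P_G$. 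Combined with $P_G = SP_G$, this gives ${\rm in}_{\omega^*}(P_G)=SP_G$, and Theorem~\ref{thm:CanonicalBasis} yields the corollary.

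I do not expect a real obstacle here: the statement is a corollary, and the only genuine content is the $\omega^*$-homogeneity of the quadratic generators of $CI_G$, which is immediate from the additivity of shortest-path lengths across a cut vertex (Proposition~\ref{prop:uniqueness}). One small bookkeeping point: if $G$ is a disconnected block graph one runs the argument on each connected component, noting that the variables $\sigma_{ij}$ joining distinct components map to $0$ (hence lie in $P_G$) and split both $\rr[F]$ and $P_G$ into the component pieces, so nothing is lost.
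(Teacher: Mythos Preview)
Your proposal is correct and follows the same overall route as the paper: use $SP_G = CI_G = P_G$, show that $P_G$ is $\omega^*$-homogeneous so that ${\rm in}_{\omega^*}(P_G)=P_G=SP_G$, and then apply Theorem~\ref{thm:CanonicalBasis}. The only minor difference is in how homogeneity is checked: the paper observes that for any $\sigma^u-\sigma^v\in SP_G=\ker\phi$ one has $\phi(\sigma^u)=\phi(\sigma^v)$, and since $\omega^*(\sigma^u)$ equals the $\omega$-weight of $\phi(\sigma^u)$, every binomial in $SP_G$ is automatically $\omega^*$-homogeneous; you instead compute $\omega^*(\sigma_{ij})=(n-1)-\ell(i,j)$ explicitly and verify homogeneity on the quadratic generators via additivity of shortest-path lengths across the cut vertex. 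Both arguments are fine and amount to the same thing.
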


\begin{proof}
We have already shown that $SP_G = P_G$.  By construction, every
one of the binomials in $SP_G$ is homogeneous with respect to the weighting $\omega^*$.  
Indeed, this weighting is exactly the weighting that counts the multiplicity of each edge of $\sigma^u$
and the $\deg_i(\sigma^u)$ as used in Section \ref{sec:allgraphs}.
But then ${\rm in}_{\omega^*} (P_G) = {\rm in}_{\omega^*} (SP_G) = SP_G$ as desired.
By Theorem \ref{thm:CanonicalBasis}, this shows that $F$ is a SAGBI basis.
\end{proof}

%%%%%%%%%%%%%%%%%%%%%%%%%%%%%%%%%%%%%%%%%%%%%%%%%%%%%%%%%%%%%%%%%%%%%%%%%%%%%%%%%%%%%%%%%%%%%%%%%%%%%%%%%%%%%%%%%%%%%%%%%%%%%%%%%%%%%%%

%%%%%%%%%%%%%%%%%%%%%%%%%%%%%%%%%%%%%%%%%%%%%%%%%%%%%%%%%%%%%%%%%%%%%%%%%%%%%%%%%%%%%%%%%%%%%%%%%%%%%%%%%%%%%%%%%%%%%%%%%%%%%%%%%%%%%%%

\bibliographystyle{apalike}

\nocite{*}

%\print{bibliography}

\end{document}